\theoremstyle{plain}
\newtheorem{main}{Theorem}
\newtheorem{theorem}{Theorem}[section]
\newtheorem{lemma}[theorem]{Lemma}
\newtheorem{proposition}[theorem]{Proposition}
\newtheorem{corollary}[theorem]{Corollary}
\theoremstyle{remark}
\newtheorem{remark}[theorem]{Remark}
\newtheorem{example}[theorem]{Example}
\newtheorem{conjecture}[theorem]{Conjecture}
\newcommand{\Leb}{\operatorname{vol}}
\newcommand{\Gibbs}{\operatorname{Gibbs}}
\newcommand{\constant}{\operatorname{const}}
\newcommand{\lcs}{\lambda^{cs}}
\newcounter{property}
\newenvironment{property*}[1][]{\par\medskip
	\noindent\textbf{P\theproperty#1'.} \rmfamily}{\medskip}
           \def\ea{\end{array}}
          \def\ec{\end{center}}
     \def\ed{\end{description}}
        \def\ee{\end{equation}}
       \def\eea{\end{eqnarray}}
     \def\eeaa{\end{eqnarray*}}
 \def\et{\end{thebibliography}}
\def\supp{\operatorname{supp}}
\def\cG{{\mathcal G}}
\def\cR{{\mathcal R}}
\def\cH{{\mathcal H}}
\def\cF{{\mathcal F}}
\def\cM{{\mathcal M}}
\def\cR{{\mathcal R}}
\def\cS{{\mathcal S}}
\def\cW{{\mathcal W}}
\def\cY{{\mathcal Y}}
\def\epsilon{\varepsilon}
\def\TT{{\mathbb T}}
\def\RR{{\mathbb R}}
\def\ZZ{{\mathbb Z}}
\def\NN{{\mathbb N}}
\def\MM{\operatorname{MM}}
\title[Maximal transverse measures of expanding foliations]
{
maximal transverse measures of expanding foliations}
\author{Raul Ures, Marcelo Viana, Fan Yang and Jiagang Yang}
\date{\today}
\thanks{R. U. was partially supported by NNSFC 11871262, NNSFC 12071202, and NNSFC 12161141002.
M.V. and J.Y. were partially supported by CNPq, FAPERJ, and PRONEX.
We acknowledge support from the Fondation Louis D--Institut de France (project coordinated by M. Viana).}
\address{1. Department of Mathematics, Southern University of Science and Technology, Shenzhen, Guangdong, China
\newline
\hspace*{0.45cm}2. SUSTech International Center for Mathematics, Shenzhen, Guangdong, China
}
\email{ures\@@sustech.edu.cn}
\address{IMPA, Est. D. Castorina 110, 22460-320 Rio de Janeiro, Brazil}
\email{viana\@@impa.br}
\address{Department of Mathematics, Wake Forest University, Winston-Salem, USA.}
\email{yangf\@@wfu.edu}
\address{Departamento de Geometria, Instituto de Matem\'atica e Estat\'\i stica, Universidade Federal Fluminense, Niter\'oi, Brazil}
\email{yangjg\@@impa.br}
\begin{document}

\begin{abstract}
For an expanding (unstable) foliation of a diffeomorphism, we use a natural dynamical averaging
to construct transverse measures, which we call \emph{maximal}, describing the statistics of how
the iterates of a given leaf intersect the cross-sections to the foliation.
For a suitable class of diffeomorphisms, we prove that this averaging converges,
even exponentially fast, and the limit measures have finite ergodic decompositions.
These results are obtained through relating the maximal transverse measures to the maximal
$u$-entropy measures of the diffeomorphism (see~\cite{UVYY1}).
\end{abstract}

\maketitle

\tableofcontents

\section{Introduction}\label{s.introduction}

The (Kolmogorov--Sinai~\cite{Kol58,Sin59}) entropy of a smooth deterministic system, such as a diffeomorphism $f:M \to M$,
is a parameter of the complexity of its evolution from the perspective of arbitrary discretizations of the phase-space.
Namely, the entropy $h_\mu(f)$ with respect to any stationary probability distribution $\mu$ is the average Shannon information associated to itineraries relative to arbitrarily fine finite partitions of $M$. That such information does arise from the time
evolution is because the system fails to be deterministic once it is discretized: itineraries are not determined by the initial partition element. In that sense, the entropy is intimately related to the divergence of trajectories associated to nearby initial conditions. 

The \emph{variational principle} (Dinaburg~\cite{Din70,Din71}, Goodman~\cite{Gdm71}, Goodwin~\cite{Gdw71}) states that, under rather general conditions, the supremum of $h_\mu(f)$ over all stationary probability distributions coincides with the system's topological entropy $h(f)$. The latter notion, which was introduced by Adler, Konheim, McAndrew~\cite{AKM65}, mimics the concept of entropy except that one considers ``itineraries'' relative to finite covers of the phase-space by arbitrarily small open sets.
When they exist, stationary probability distributions realizing the supremum are called \emph{measures of maximal entropy}. 

In some important situations, divergence of trajectories takes place \emph{preferably} along certain well-defined directions which
define an invariant \emph{lamination}, that is, an invariant decomposition of the phase-space into smooth submanifolds
(the \emph{leaves} of the lamination).
In such situations, one is naturally to consider the  corresponding \emph{partial entropy}, which measures the complexity of the dynamics along such an invariant structure. This idea goes back to Pesin~\cite{Pes77}, Ledrappier~\cite{Led84a}, and
Ledrappier, Young~\cite{LeY85a,LeY85b}, where the relevant invariant structure is the Pesin unstable lamination.

An important case which is more directly relevant to the present paper, is that of the strong-unstable foliation $\cF^{uu}$ of a
partially hyperbolic diffeomorphism $f:M\to M$: the leaves are tangent at every point to the subspace $E^{uu}_x$ along which
the system diverges most strongly. We denote by $h_\mu(f,\cF^{uu})$ the corresponding partial entropy, which we call the 
\emph{$u$-entropy}.

A notion of \emph{partial topological entropy} was also introduced in this setting by Saghin, Xia~\cite{SaX09}. We call it the \emph{topological $u$-entropy} of the diffeomorphism, and denote it as $h(f,\cF^{uu})$.
More recently, Hu, Hua, Wu~\cite{HHW17} proved the variational $u$-principle: $h(f,\cF^{uu})$ coincides with the supremum
of $h_\mu(f,\cF^{uu})$ over all invariant probability distributions $\mu$. Besides, \emph{measures of maximal $u$-entropy},
for which the supremum is attained, always exist in this context.

The main goal of this paper is to point out that measures of maximal $u$-entropy are a tool to describe the behavior
of strong-unstable foliations and, especially, the transverse behavior of their leaves. In doing this we will unveil
certain interesting connections between three different kinds of transverse objects:
\begin{itemize}
\item hitting measures of leaf iterates with cross-sections, carrying topological and dynamical information;
\item quotients of measures of maximal $u$-entropy under holonomy maps, originating from ergodic theory;
\item invariant transverse measures, arising from foliation theory (Plante~\cite{Pla75}, Goodman, Plante~\cite{GoP79}).
\end{itemize}

Before explaining these connections formally, let us illustrate them through a simple example.
This is a variation the classical solenoid of Smale~\cite{Sma67} which has been considered a number of times (see
\cite[Section~8]{UVYY1} and references therein).
Let $D$ be the 2-dimensional disk, and $f_0:S^1\times D \to S^1 \times D$ be a $C^2$ embedding of the form
\begin{equation}\label{eq.topologicalsolnoid}
f_0(\theta, x) = (k\theta \operatorname{mod}\ZZ, h_\theta(x))
\end{equation}
where $k \ge 3$ and $h_\theta$ is such that $\|Dh_\theta^{\pm 1}\|$ are strictly less than $k$ at every point.
Assume also that $\|Dh_0\| < 1$ where $0 \in S^1$ is a fixed point for $\theta \mapsto k\theta \operatorname{mod} \ZZ$.
Using the theory developed in Sections~8 and~9 of \cite{UVYY1} one can check that $f_0$ has a unique measure $\mu$ of
maximal $u$-entropy.

Let $\Lambda_0 = \cap_{n\ge 1} f_0^n(M)$ be the corresponding attractor. This is known (see \cite{UVYY1}) to be a
disjoint union of smooth curves, called \emph{strong-unstable leaves}, which form an invariant lamination of the
attractor transverse to every ``vertical'' disk $S_\theta = \{\theta\} \times D$.
We call \emph{strong-unstable plaque} any connected component $\xi$ of a strong-unstable leaf minus the disk $S_0$.

The iterates $f^n(\xi)$ of any strong-unstable plaque grow exponentially fast as $n$ increases, becoming denser and
denser in the attractor $\Lambda_0$. A natural way to try and describe this process quantitatively is by looking at
how those iterates hit each of the vertical disks $S_\theta$. More precisely, we consider the \emph{hitting measures} 
$$
\hat\mu_{\theta,n} = \frac{1}{\#\{z \in S_\theta \cap f^n(\xi)\}}
\sum_{z \in S_\theta \cap f^n(\xi)} \delta_z
$$
of $f^n(\xi)$ with each $S_\theta$.

Our results show that the sequence $(\hat\mu_{\theta,n})_n$ converges to some probability measure $\hat\mu_\theta$ on the
disk $S_\theta$ which does not depend on the choice of the strong-unstable plaque.
The family $\{\hat\mu_\theta: \theta \in S^1\}$ is a \emph{invariant transverse measure} for the strong-unstable lamination,
in the sense of Plante~\cite{Pla75,GoP79} and Bowen, Marcus~\cite{BoM77}: invariance means that the projections (holonomies)
along strong-unstable leaves map the $\hat\mu_\theta$ to one another. 
See Figure~\ref{fig:solenoid}.

\begin{figure}[ht]
\psfrag{S1}{$S_\theta$}\psfrag{S2}{$S_{\theta'}$}
\psfrag{X}{$f^n(\xi)$}
\centering
\includegraphics[width=3in]{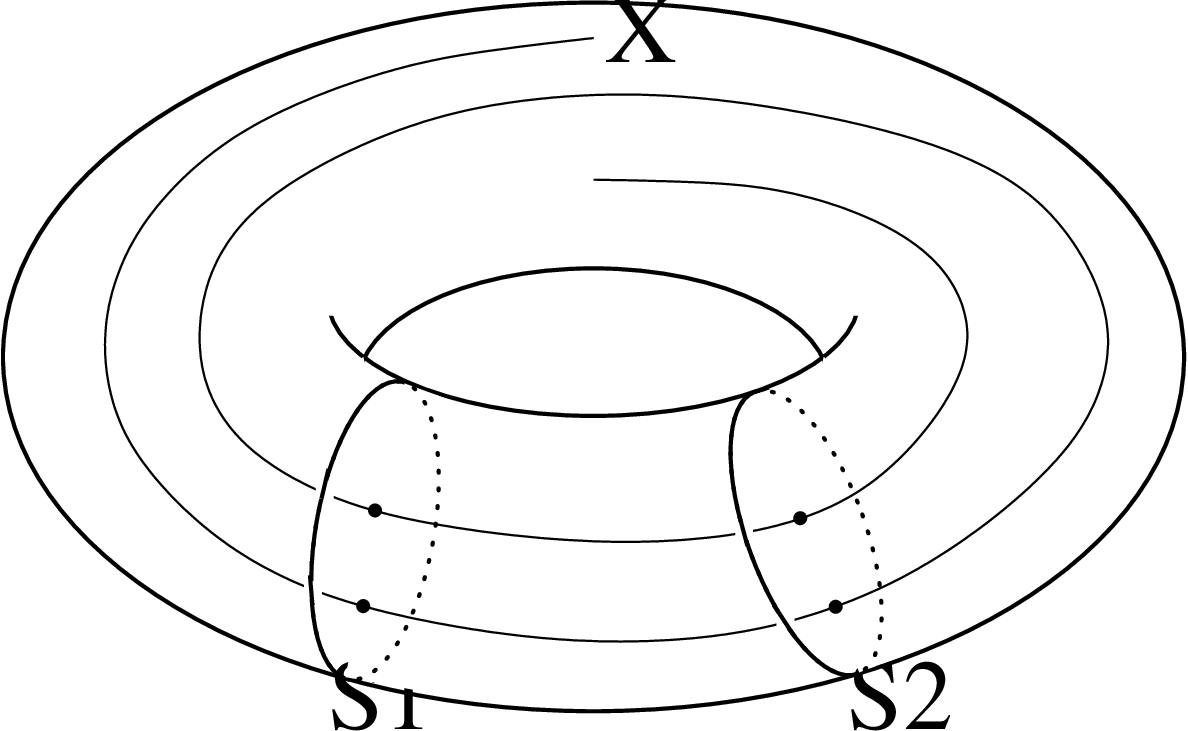}
\caption{\label{fig:solenoid}Hitting measures record the intersections of iterates $f^n(\xi)$ of strong-unstable plaques with each of the transverse disks $S_\theta$, $\theta\in S^1$. As $n$ goes to infinity, they converge \emph{exponentially fast} to a family $\hat\mu_\theta$ of probability measures on the transverse disks, which is invariant under the holonomies of the strong-unstable lamination.}
\end{figure}

Now, it turns out the limit invariant transverse measure is completely determined by the measure $\mu$ of maximal $u$-entropy.
Namely, each $\hat\mu_\theta$ coincides with the projection of $\mu$ to $S_\theta$ along strong-unstable leaves
(let us say that we always project in the counterclockwise direction). Thus we call it the \emph{maximal transverse measure}.
Moreover, surprisingly, the convergence $\hat\mu_{\theta,n} \to \hat\mu_\theta$ is exponentially fast, in a natural sense.
The precise statements will appear in the next section.

Similar statements hold for a much more general class of dynamical systems, as we are going to see, and may actually be typical
in great generality amongst partially hyperbolic systems. For instance, see Conjecture~\ref{cj.time-1} below.

\paragraph{\textbf{Acknowledgements}} We are grateful to the anonymous reviewers for several comments and corrections which greatly helped us improve the presentation.

\section{Statement of main results}\label{s.statement}

Throughout this paper, we take $f:M\to M$ to be a partially hyperbolic $C^1$ diffeomorphism
which factors over an Anosov automorphism $A:\TT^d\to\TT^d$.
We start by defining these concepts precisely.

A $C^1$ diffeomorphism $f:M \to M$ on a compact manifold $M$ is \emph{partially hyperbolic}
if there exists a $Df$-invariant splitting
$$
T M = E^{cs} \oplus E^{uu}
$$
of the tangent bundle such that $Df \mid_{E^{uu}}$ is uniformly expanding and \emph{dominates}
$Df \mid_{E^{cs}}$. By this we mean that there exist a Riemmanian metric on $M$ and a
constant $\omega < 1$ such that
\begin{equation}\label{eq.omega}
\|Df(x)v^{uu}\| \ge \frac{1}{\omega}
\text{ and }
\frac{\|Df(x)v^{cs}\|}{\|Df(x)v^{uu}\|}
\le \omega
\end{equation}
for any unit vectors $v^{cs}\in E^{cs}_x$ and $v^{uu}\in E^{uu}_x$,
and any $x \in M$.

The \emph{strong-unstable} sub-bundle $E^{uu}$ is uniquely integrable, meaning that there exists a
unique foliation $\cF^{uu}$ which is invariant under $f$ and tangent to $E^{uu}$ at every point.
The corresponding holonomy maps are H\"older continuous:
see \cite[Corollary~2.1]{BP74} and \cite[Theorem~6.4]{HP70}, which use a stronger (absolute)
partial hyperbolicity condition; a proof for the (pointwise) notion of partial hyperbolicity we
assume here can be found in \cite{hch_phd}. See also \cite[Theorem~A']{PSW97}.

We assume that $f$ is \emph{dynamically coherent}, meaning that the \emph{center-stable} sub-bundle $E^{cs}$
admits some $f$-invariant integral foliation $\cF^{cs}$. See \cite{HPS77}.
A compact $f$-invariant set $\Lambda\subset M$ is \emph{$u$-saturated} if it consists of entire leaves of $\cF^{uu}$.
Then it is called \emph{$u$-minimal} if every strong-unstable leaf contained in $\Lambda$ is dense in $\Lambda$.

As in  \cite{UVYY1}, we say that $f$ \emph{factors over Anosov} if there exist a hyperbolic linear automorphism
$A:\TT^d\to\TT^d$ on some torus, and a continuous surjective map $\pi:M\to\TT^d$ such that
\begin{enumerate}
\item[(H1)] $\pi \circ f = A \circ \pi$;
\item[(H2)] $\pi$ maps each strong-unstable leaf of $f$ homeomorphically to an unstable leaf of $A$;
\item[(H3)] $\pi$ maps each center-stable leaf of $f$ to a stable leaf of $A$.
\end{enumerate}
Several examples of diffeomorphisms that factor over Anosov were described in \cite[Sections 6 to 9]{UVYY1}.

Let $\cR=\{\cR_1, \dots, \cR_k\}$ be a Markov partition for the linear automorphism,
and $\cM=\{\cM_1, \dots, \cM_k\}$ be defined by $\cM_i=\pi^{-1}(\cR_i)$.
We denote by $\cW^s$ and $\cW^u$, respectively, the stable and unstable foliations of $A$.
The connected components of the intersection of their leaves with each
element of $\cR$ will be called \emph{stable/unstable plaques} of $A$.
Center-stable plaques and strong-unstable plaques of $f$ are defined analogously,
considering the leaves of $\cF^{cs}$ and $\cF^{uu}$ instead.

Now let us state our main results.
Some of the technical notions involved in the statements will be defined along the way
(see Section~\ref{s.transverse_invariant} for the precise definition of an invariant transverse measure).

\begin{main}[Existence of transverse measures]\label{main.A}
Let $f:M \to M$ be a partially hyperbolic $C^1$ diffeomorphism that factors over Anosov.
Then there exist positive constants $c_1, \dots, c_k$ such that,
for any $f$-invariant measure $\mu$ of maximal $u$-entropy,
$\tau_\mu=\{c_i\hat\mu_S: S \subset\cM_i\}$ is an invariant transverse measure
for the strong-unstable foliation $\cF^{uu}$,
where $S$ denotes a cross-section to the strong-unstable foliation, 
and $\hat\mu_S$ is the projection of $\mu \mid \cM_i$ to $S$ along the
strong-unstable plaques.
\end{main}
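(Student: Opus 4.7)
The plan is to construct $\hat\mu_S$ by disintegrating $\mu|_{\cM_i}$ along the measurable partition of $\cM_i$ into strong-unstable plaques, in the sense of Rokhlin. Since each center-stable cross-section $S \subset \cM_i$ meets every strong-unstable plaque of $\cM_i$ in exactly one point, $S$ is canonically identified with the quotient space, and $\hat\mu_S$ is the induced quotient measure. With this definition, holonomy invariance \emph{within} a single Markov element $\cM_i$ is automatic: if $S_1, S_2 \subset \cM_i$ are two cross-sections and $h_{12}$ is the corresponding strong-unstable holonomy between them, then $h_{12}$ is precisely the canonical identification between quotient spaces, so $(h_{12})_* \hat\mu_{S_1} = \hat\mu_{S_2}$ with no rescaling.

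The nontrivial content is holonomy invariance \emph{between} different Markov elements, which is where the constants $c_i$ enter. I would exploit the Markov property, lifted to $M$ via $\pi$: whenever $A(\cR_i^\circ) \cap \cR_j^\circ \neq \emptyset$, the intersection $f(\cM_i) \cap \cM_j$ is saturated by entire strong-unstable plaques of $\cM_j$, and dually $f^{-1}(\cM_j) \cap \cM_i$ is saturated by entire center-stable plaques of $\cM_i$. Combined with $f$-invariance of $\mu$ and the disintegration from the previous step, this yields, for each cross-section $S' \subset \cM_j$, a decomposition
\[
\hat\mu_{S'} \;=\; \sum_{i \colon T_{ij}=1} f_*\bigl(\hat\mu_{f^{-1}(S') \cap \cM_i}\bigr),
\]
where $T$ is the transition matrix of the Markov partition $\cR$.

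Next, I would represent a general holonomy $h : S \to S'$ with $S \subset \cM_i$, $S' \subset \cM_j$ (possibly $i \neq j$) as a concatenation of a forward iterate of $f$, a local holonomy inside a single Markov element, and a backward iterate of $f^{-1}$. Using the decomposition above, holonomy invariance of $\tau_\mu$ becomes a linear system in the unknowns $c_1,\ldots,c_k$, of Perron--Frobenius type for a nonnegative irreducible matrix built from $T$ together with the combinatorics of how center-stable plaques subdivide under $f$. Setting $(c_1,\ldots,c_k)$ to be the corresponding Perron eigenvector (unique up to scalar by irreducibility, which follows from topological transitivity of $A$) guarantees consistency. The crucial feature is that the matrix in question depends only on the Markov partition and on $A$ acting on $\TT^d$, so the same constants serve every measure of maximal $u$-entropy.

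The main obstacle I foresee lies in justifying this last universality claim, that the Perron--Frobenius matrix is truly independent of $\mu$. This rests on a structural result taken from \cite{UVYY1}: the conditional measures of any maximal $u$-entropy measure $\mu$ along strong-unstable plaques are canonical, proportional to the $u$-Lebesgue measure inherited from the Anosov base through $\pi$. Once this is available, the rescaling of quotient measures under $f_*$ is controlled entirely by the unstable volume expansion of $A$ inside each $\cR_i$, and the Perron--Frobenius argument closes the proof.
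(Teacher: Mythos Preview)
Your proposal assembles the right raw materials: the quotient measures $\hat\mu_S$ via Rokhlin disintegration, automatic holonomy invariance within each $\cM_i$, the idea of factoring a general holonomy through iterates of $f$ to reduce to a single Markov element, and above all the structural input from \cite{UVYY1} that the conditional measures of any maximal $u$-entropy $\mu$ along strong-unstable plaques are the canonical reference measures pulled back from Lebesgue on $\TT^d$. The paper uses exactly these ingredients.

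Where you diverge is in how to extract the constants $c_i$. You propose that holonomy invariance across Markov elements assembles into a Perron--Frobenius eigenvector problem for a matrix built from the transition data. This framing is not the right linear-algebraic structure: what is actually needed is that the Jacobians $J_{ij}$ of strong-unstable holonomy from $\cM_i$ to $\cM_j$ (with respect to the measures $\hat\mu$) are constants satisfying the \emph{cocycle} condition $J_{ij}J_{jl}=J_{il}$, so that they are coboundaries $J_{ij}=c_i/c_j$. That is a different problem from finding a positive eigenvector, and your decomposition $\hat\mu_{S'}=\sum_i f_*(\hat\mu_{f^{-1}(S')\cap\cM_i})$, while correct, concerns the action of $f$ on quotient measures rather than the action of foliation holonomy; it does not by itself yield the needed system.

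The paper bypasses all of this by writing down the constants explicitly: $c_i = 1/\Leb^u(\pi(\xi^u_i))$, the reciprocal of the unstable volume of the corresponding plaque of $A$. The Jacobian computation is then direct. Given a holonomy $H$ from a small domain $E\subset\xi^{cs}_i(x_i)$ to $\xi^{cs}_j(x_j)$ along a common strong-unstable leaf, choose $N$ large enough that $f^{-N}(x_i)$ and $f^{-N}(x_j)$ lie in the same Markov element $\cM_l$. By $f$-invariance of $\mu$ and the $c$-Gibbs property, both $\hat\mu_i(E)$ and $\hat\mu_j(H(E))$ become integrals over the \emph{same} set $B\subset\xi^{cs}_l$ of reference-measure masses $\nu^u_{l,z}(f^{-N}(\xi^u_i))$ and $\nu^u_{l,z}(f^{-N}(\xi^u_j))$ respectively. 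Since $A$ is linear, these masses equal $\Leb^u(\pi\xi^u_i)/\det(A^N|E^u)$ and $\Leb^u(\pi\xi^u_j)/\det(A^N|E^u)$, so their ratio is exactly $c_j/c_i$, independently of $E$, $N$, and $\mu$.

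In short: once you have the canonical conditionals (which you correctly flag as the crux), the answer is a one-line geometric formula rather than an eigenvector, and the backward-iteration trick you mention is all that is needed to verify it.
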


A partially hyperbolic diffeomorphism $f:M \to M$ is said to have \emph{$c$-mostly contracting center}
if the center Lyapunov exponents of every ergodic measure of maximal $u$-entropy are negative.
This is a variation of the notion of diffeomorphism with mostly contracting center,
which was introduced in \cite{BoV00} and has been developed by several authors,
for instance in~\cite{Cas02,Dol00,DVY16}.
Examples and more information on systems with $c$-mostly contracting center,
including alternative equivalent definitions, can be found in \cite{UVYY1} and
Section~\ref{s.proofC} below.

\begin{main}[Exponential convergence]\label{main.B}
Let $f:M\to M$ be a partially hyperbolic diffeomorphism that factors over Anosov and has $c$-mostly contracting center.
Let $\mu$ be an ergodic measure of maximal $u$-entropy whose support is connected.
Then for any strong-unstable plaque $\xi^u(x)$ with $x\in\supp\mu$,
and any cross-section $S$ contained in some $\cM_j$ with $\mu(\cM_j)>0$,
and any H\"older real function $\hat\varphi$ supported inside $S$,
$$
\frac{1}{\# \big(f^n(\xi^u(x)) \cap S\big)} \sum_{q \in f^n(\xi^u(x)) \cap S} \hat\varphi(q) 
\to \frac{1}{\|\hat\mu_S\|} \int \hat\varphi \, d\hat\mu_S
$$
exponentially fast as $n\to\infty$.
\end{main}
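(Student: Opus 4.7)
The plan is to identify the limit via Theorem~\ref{main.A} and then establish exponential convergence through a spectral-gap analysis of a transfer operator associated with the Markov structure of $f$ inherited from the Anosov factor. By Theorem~\ref{main.A}, $\hat\mu_S/\|\hat\mu_S\|$ is an invariant transverse probability on $S$ and hence the natural candidate for the limit.

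Next, I would set up a symbolic coding of iterates of $\xi^u(x)$. Using the Markov partition $\cR$ of $A$ and the factoring map $\pi$, the intersection points in $f^n(\xi^u(x)) \cap \cM_j$ are in bijection with admissible Markov itineraries of length $n$ in the subshift of finite type associated with $\cR$ that start at the $\cR$-address of $\pi(x)$ and end in $\cR_j$; each such point pulls back to an exponentially small subplaque of $\xi^u(x)$ whose $f^n$-image is a full strong-unstable plaque in $\cM_j$. I would then express the averaged hitting sum as a ratio
\[
\frac{(\cL^n \varphi)(x)}{(\cL^n \mathbf{1})(x)},
\]
where $\cL$ is a counting-type transfer operator acting on H\"older functions along strong-unstable plaques (each preimage entering with weight $1$, since we are counting intersections rather than integrating against a Jacobian) and $\varphi$ is a H\"older extension of $\hat\varphi$ subordinate to $S$.

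The main obstacle is establishing a spectral gap for $\cL$ on an appropriate H\"older Banach space, with leading eigen-functional proportional to the transverse measure $\tau_\mu$ from Theorem~\ref{main.A}. All three hypotheses enter here synergistically: the hyperbolicity of $A$ fixes the expansion rate and the combinatorics of the subshift and supplies classical Perron--Frobenius behavior downstairs; the $c$-mostly contracting center yields exponential contraction along center-stable holonomies, producing a Lasota--Yorke type inequality for $\cL$ acting on H\"older observables over strong-unstable plaques; and ergodicity of $\mu$ together with connectedness of $\supp\mu$---analyzed in detail in \cite{UVYY1}---excludes peripheral eigenvalues, promoting quasi-compactness to a genuine spectral gap. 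Once the gap is in place, both numerator and denominator in the displayed ratio converge, at the rate given by the gap, to multiples of $\int\hat\varphi\,d\hat\mu_S$ and $\|\hat\mu_S\|$ respectively, yielding the desired exponential equidistribution.
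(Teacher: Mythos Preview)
Your strategy---express the hitting average as a ratio of transfer-operator iterates and extract a spectral gap---is a plausible route in principle, but it is not the paper's, and as written it leaves the decisive steps as unsupported assertions.

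The paper proceeds by a \emph{coupling argument} in the Young--Dolgopyat style rather than a Lasota--Yorke/spectral analysis. The Coupling Lemma (Lemma~\ref{l.coupling}) builds, for any two plaques in $\supp\mu$, a measure-preserving pairing $\tau$ between $\xi^u_{i_1}(x_1)\times[0,1]$ and $\xi^u_{i_2}(x_2)\times[0,1]$ together with a coupling time $R$ with exponential tail, so that $d(f^n x,f^n\tau(x))$ decays exponentially once $n\ge R$. This yields directly (Corollary~\ref{c.coupling1}) that $\int\varphi\,df^n_*\nu^u_{i,x}\to\int\varphi\,d\mu$ exponentially for H\"older $\varphi$. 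The conversion to your counting sum is then an elementary identity (Lemma~\ref{l.coupling2}): because the reference measures inherit \emph{constant Jacobian} from the linear Anosov factor, every sub-plaque $f^{-n}(\xi^u_j(q))\subset\xi^u_i(x)$ carries the same $\nu^u_{i,x}$-mass, so the integral factors as $(f^n_*\nu^u_{i,x})(\cM_j)$ times the uniform average over intersection points; the prefactor converges exponentially to $\|\hat\mu_S\|$ by classical mixing of $A$.

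Against this, your proposal has three concrete gaps. First, the Banach space for $\cL$ is never pinned down: ``H\"older functions along strong-unstable plaques'' is ambiguous, and since $\cM$ is only $u$-generating the symbolic coding does not determine points of $M$; the paper's substitute (Section~\ref{ss.Holder_densities}) is a space $E(R)$ of \emph{measures} with H\"older log-densities along plaques. Second, you misallocate the hypotheses: contraction of the H\"older seminorm under $\cL$ comes from uniform expansion along $E^{uu}$, not from $c$-mostly contracting center; the latter is used in the paper (Proposition~\ref{p.mc} and the hyperbolic-times estimate \eqref{eq.USet}--\eqref{eq.KU2}) to ensure that coupled orbits genuinely approach each other in the $E^{cs}$ direction, which is what eventually rules out mass escaping the pairing. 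Third, invoking Theorem~\ref{main.A} does not identify the limit: that theorem manufactures a transverse measure \emph{from} $\mu$, but gives no reason the hitting averages should converge to it---establishing that is exactly the content of the coupling (or, in your framework, of the spectral gap you have yet to prove).
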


\begin{example}
Let $f:M \to M$ be any partially hyperbolic diffeomorphism on a 3-dimensional nilmanifold $M \neq \TT^3$.
Then, by Hammerlindl, Potrie~\cite{HaP14}, $f$ factors over Anosov.
Moreover, if $f$ is $C^{1+}$ and admits some hyperbolic periodic point then  any diffeomorphism in a
$C^1$-neighborhood has $c$-mostly contracting center. Thus Theorems~\ref{main.A} and~\ref{main.B} apply to it.
Indeed, in this case there is exactly one measure of maximal $u$-entropy, and its support is connected.
See \cite[Section~7]{UVYY1}. 
\end{example}

\begin{remark}\label{r.not_connected}
A version of Theorem~\ref{main.B} remains true when the support of $\mu$ is not connected.
That is because the support has finitely many connected components, and the normalized
restrictions of $\mu$ to each connected component are ergodic for a suitable iterate $f^l$.
See Lemma~4.9 and equation (28) in \cite{UVYY1}. Then the previous statement can be applied to
the restriction of $f^l$ to each connected component. Corresponding observations apply to
Theorem~\ref{main.C}, Theorem~\ref{main.D} and Theorem~\ref{t.large_deviations} below.
\end{remark}

Given any $f$-invariant probability measure $\mu$, we say that the system $(f,\mu)$ has \emph{exponential decay
of correlations} for H\"{o}lder observables if for any $\gamma \in(0,1]$ there exists $\tau<1$ such that for
all $\gamma$-H\" older functions $\varphi,\psi: M \to \RR$ there exists $K(\varphi,\psi)>0$ such that
$$
\left| \int (\varphi\circ f^n)\psi\,d\mu - \int \varphi\,d\mu \int \psi\,d\mu \right|
\leq K(\varphi,\psi) \tau^n \text{ for every }n\geq 1.
$$
The technical heart of the proof of Theorem~\ref{main.B} is the coupling lemma (Lemma~\ref{l.coupling})
which we prove in Section~\ref{s.coupling2}, based on methods developed by Young~\cite{You99} and
Dolgopyat~\cite{Dol00}. This lemma also yields the following result:

\begin{main}\label{main.C}
Let $f:M\to M$ be a partially hyperbolic diffeomorphism that factors over Anosov and has
$c$-mostly contracting center. Let $\mu$ be an ergodic measure of maximal $u$-entropy whose support
$\supp\mu$ is connected. Then $(f,\mu)$ has exponential decay of correlations for H\"{o}lder observables.
\end{main}

For the next theorem we assume that the center-stable bundle admits a dominated splitting
$E^{cs}= E^{ss} \oplus E^{c}$, where $E^{ss}$ is uniformly contracting.
Thus, $f$ admits a partially hyperbolic splitting $E^{ss} \oplus E^{c} \oplus E^{uu}$
into three sub-bundles, meaning that relative to some Riemmanian metric on $M$ there exists
$\omega < 1$ such that
\begin{equation}\label{eq.omega2}
\begin{aligned}
\|Df(x)v^{uu}\| \ge \frac{1}{\omega},
\quad
\|Df(x)v^{ss}\| \le \omega, \text{ and }\\
\frac{1}{\omega} \|Df(x)v^{ss}\|
\le \|Df(x)v^{c}\|
\le \omega \|Df(x)v^{uu}\|
\end{aligned}
\end{equation}
for any unit vectors $v^{ss}\in E^{ss}_x$, $v^{c}\in E^{c}_x$ and $v^{uu}\in E^{uu}_x$,
and any $x \in M$.
We denote by $\cF^{ss}$ the \emph{strong-stable} foliation, that is, the unique foliation
tangent to $E^{ss}$ at every point.
The assumption that $f$ is dynamically coherent implies that there exists a \emph{center foliation}
$\cF^c$ tangent to $E^c$ at every point.

\begin{main}(Ergodicity)\label{main.D}
Let $f:M\to M$ be a $C^1$ diffeomorphism with a partially hyperbolic splitting $E^{ss} \oplus E^{c} \oplus E^{uu}$
into three sub-bundles. Assume that $f$ factors over Anosov and has $c$-mostly contracting center.
Assume furthermore that the map $\pi:M\to\TT^d$ is a fiber bundle whose fibers are the center leaves of $f$,
and those fibers are compact. 
Then, for any ergodic measure of maximal $u$-entropy $\mu$, the invariant transverse measure
$\tau_\mu$ has a finite ergodic decomposition.
\end{main}

A natural question is whether this theory can be extended to other classes of partially hyperbolic
diffeomorphisms. In this direction we state:
\begin{conjecture}\label{cj.time-1}
The conclusions of Theorem~\ref{main.A}, Theorem~\ref{main.B} and Theorem~\ref{main.C} remain  
true for perturbations of time-1 maps of transitive Anosov flows.
\end{conjecture}

This paper is organized as follows. In Section~\ref{s.preliminaries1} we recall several useful notions and
facts, mostly from our previous paper \cite{UVYY1}. We also recall the concept of invariant transverse measure,
which we take from Bowen, Marcus~\cite{BoM77}. Section~\ref{s.proofA} and Section~\ref{s.proofD} contain the
proofs of Theorem~\ref{main.A} and Theorem~\ref{main.D}, respectively.
In Section~\ref{s.proofB} we reduce the proof of Theorem~\ref{main.B} to a key technical result which is
stated in Lemma~\ref{l.coupling}.

The proof of Lemma~\ref{l.coupling} is based on the coupling argument of Young~\cite{You99},
in the form developed by Dolgopyat~\cite[Sections~6--9]{Dol00} for diffeomorphisms with mostly contracting
center, which we present in Section~\ref{s.coupling2}. In preparation for that argument,
in Section~\ref{s.large_deviations} we prove a large deviations principle (Theorem~\ref{t.large_deviations})
which is interesting in itself. Lemma~\ref{l.coupling} is also the key step in the proof of Theorem~\ref{main.C},
which we present in Section~\ref{s.proofC}.

\section{Preliminaries}\label{s.preliminaries1}

The \emph{topological $u$-entropy} of $f$, denoted by $h(f,\cF^{uu})$, is the maximal rate of volume growth
for any disk contained in a strong-unstable leaf. See Saghin, Xia~\cite{SaX09}.
The \emph{$u$-entropy} of an $f$-invariant measure $\mu$, denoted as $h_\mu(f,\cF^{uu})$,
is defined by
$$
h_\mu(f,\mu) = H_\mu\left(f^{-1} \xi^u \mid \xi^u\right)
$$
where $\xi^u$ is any measurable partition subordinate to the strong-unstable foliation.
Recall that, according to Rokhlin~\cite[Section~7]{Rok67a}, the entropy $h_\mu(f)$ is the
supremum of $H_\mu\left(f^{-1} \xi \mid \xi\right)$ over all measurable partitions $\xi$
with $f^{-1}\xi \prec \xi$. Thus we always have
\begin{equation}\label{eq.two_entropies}
h_\mu(f,\cF^{uu}) \le h_\mu(f).
\end{equation}
See Ledrappier, Strelcyn~\cite{LeS82}, Ledrappier~\cite{Led84a},  Ledrappier, Young~\cite{LeY85a},
and Yang~\cite{Yan21}. We call $\mu$ a \emph{measure of maximal $u$-entropy} if it satisfies
$$
h_\mu(f,\cF^{uu})=h(f,\cF^{uu}).
$$
By Hu, Wu, Zhu~\cite{HWZ21}, the set $\MM^u(f)$ of measures of maximal $u$-entropy is always
non-empty, convex and compact. Moreover, its extreme points are ergodic measures.

\subsection{Markov partitions}\label{s.Markov.partitions}

Let $\cR=\{\cR_1, \dots, \cR_k\}$ be a Markov partition for the linear automorphism
$A:\TT^d\to\TT^d$. By this we mean (see Bowen~\cite[Section~3.C]{Bow75a})
a finite covering of $\TT^d$ by small closed subsets $\cR_i$ such that
\begin{enumerate}
\item[(a)] each $\cR_i$ is the closure of its interior, and the interiors are pairwise disjoint;
\item[(b)] for any $a, b \in\cR_i$, $W^u_i(a)$ intersects $W_i^s(b)$ at exactly one point,
which we denote as $[a,b]$;
\item[(c)] $A(W^s_i(a)) \subset W^s_j(A(a))$ and $A(W^u_i(a)) \supset W^u_j(A(a))$ if $a$ is in
the interior of $\cR_i$ and $A(a)$ is in the interior of $\cR_j$.
\end{enumerate}
Here, $W^u_i(a)$ is the connected component of $W^u(a) \cap \cR_i$ that contains $a$,
and $W^s_i(a)$ is the connected component of $W^s(a) \cap \cR_i$ that contains $a$.
We call them, respectively, the \emph{unstable plaque} and
the \emph{stable plaque} through $a$.
Property (b) is called \emph{local product structure}.

The boundary $\partial\cR_i$ of each $\cR_i$ coincides with
$\partial^s\cR_i \cup \partial^u\cR_i$, where $\partial^s\cR_i$ is the set of points
$x$ which are not in the interior of $W^u_i(x)$ inside the corresponding unstable leaf,
and $\partial^u\cR_i$ is defined analogously.
By product structure, $\partial^s\cR_i$ consists of stable plaques and
$\partial^u\cR_i$ consists of unstable plaques.
The Markov property (c) implies that the total stable boundary
$\partial^s\cR = \cup_i \partial^s\cR_i$ is forward invariant and the
total unstable boundary
$\partial^u\cR = \cup_i \partial^u\cR_i$ is backward invariant under $A$.
Since the Lebesgue measure on $\TT^d$ is invariant and ergodic for $A$,
it follows that both $\partial^s\cR$ and $\partial^u\cR$ have zero
Lebesgue measure. Then, by Fubini, the intersection of $\partial^s\cR$
with almost every unstable plaque has zero Lebesgue measure in the plaque.
It follows that the same is true for \emph{every} unstable plaque,
since the stable holonomies of $A$, being affine, preserve the class of
sets with zero Lebesgue measure inside unstable leaves.
A similar statement holds for $\partial^u\cR$.

Next, define $\cM = \{\cM_1,\ldots,\cM_k\}$ by $\cM_i = \pi^{-1}(\cR_i)$.
For each $i=1, \dots, k$ and $x\in\cM_i$, let $\xi_i^u(x)$ be the connected component of
$\cF^{uu}(x)\cap\cM_i$ that contains $x$, and $\xi_i^{cs}(x)$ be the pre-image of $W^{s}_i(\pi(x))$. By construction,
\begin{equation}\label{eq.Markov1}
f(\xi_i^{u}(x)) \supset \xi_j^{u}(f(x))
\text{ and }
f(\xi_i^{cs}(x)) \subset \xi_j^{cs}(f(x))
\end{equation}
whenever $x$ is in the interior of $\cM_i$ and $f(x)$ is in the interior of $\cM_j$.
We refer to $\xi^i_u(x)$ and $\xi^i_{cs}(x)$, respectively,
as the \emph{strong-unstable plaque} and the \emph{center-stable plaque}
through $x$.

The local product structure property also extends to $\cM$:
for any $x, y \in \cM_i$ we have that $\xi^u_i(x)$ intersects $\xi^{cs}_i(y)$ at exactly
one point, which we still denote as $[x,y]$. That can be seen as follows.
To begin with, we claim that $\pi$ maps $\xi_i^u(x)$ homeomorphically to $W^u_i(\pi(x))$.
In view of the assumption (H2) above, to prove this it is enough to check that
$\pi(\xi_i^u(x)) = W^u_i(\pi(x))$. The inclusion $\subset$ is clear, as both sets are connected.
Since $\xi_i^u(x)$ is compact, it is also clear that $\pi(\xi_i^u(x))$ is closed in $W^u_i(\pi(x))$.
To conclude, it suffices to check that it is also open in $W^u_i(\pi(x))$.
Let $b=\pi(z)$ for some $z\in\xi_i^u(x)$. By assumption (H2), for any small neighborhood $V$ of $b$
inside $W^u(b)$, there exists a small neighborhood $U$ of $z$ inside $\cF^{uu}_z$
that is mapped homeomorphically to $V$. By definition, a point $w\in U$ is in $\cM_i$ if and
only if $\pi(w)$ is in $\cR_i$. Thus $\pi$ maps $U\cap\cM_i$ homeomorphically to $V\cap\cR_i$.
That implies that $b$ is in the interior of $W^u_i(b)$, and that proves that $\pi(\xi_i^u(x))$
is indeed open in $W^u_i(\pi(x))$. Thus the claim is proved.
Finally, $[x,y]$ is precisely the sole pre-image of $[\pi(x),\pi(y)]$ in $\xi^u(x)$;
notice that this pre-image does belong to $\xi_i^{cs}(y)$, by definition.

This shows that $\cM$ is a Markov partition for $f$, though not necessarily a generating one.
In any event, the fact that $f$ is uniformly expanding along strong-unstable leaves ensures that
$\cM$ is automatically \emph{$u$-generating}, in the sense that
\begin{equation}\label{eq_u-generating}
\bigcap_{n=0}^\infty f^{-n}\left(\xi_i^u\left(f^n(x)\right)\right)
= \{x\} \text{ for every } x\in\Lambda.
\end{equation}
We call \emph{center-stable holonomy} the family of maps $H^{cs}_{x,y}:\xi_i^u(x) \to \xi^u_i(y)$
defined by the condition that
$$
\xi_i^{cs}(z) = \xi_i^{cs}(H^{cs}_{x,y}(z))
$$
whenever $x, y \in \cM_i$ and $z \in \xi_i^u(x)$.

\subsection{Reference measures}

By pulling the Lebesgue measure along the unstable leaves of $A$ back under the factor map $\pi$,
one obtains a special family of measures on the strong-unstable plaques of $f$ that we call the
\emph{reference measures}. More precisely, the reference measures are the probability measures
$\nu^u_{i,x}$ defined on each strong-unstable plaque $\xi_i^u(x)$, $x\in\cM_i$,
$i\in\{1, \dots, k\}$ by
$$
\pi_*\nu^u_{i,x} = \Leb^u_{i,\pi(x)} = \text{normalized Lebesgue measure on } W^u_i(\pi(x)).
$$
Since the Lebesgue measure on unstable leaves are preserved by the stable holonomy of $A$ (as the
latter is affine), the construction in the previous section also gives that these reference measures
are preserved by center-stable holonomies of $f$:
\begin{equation}\label{eq_cs-invariant}
\nu_{i,y}^u = \left(H^{cs}_{x,y}\right)_*\nu^u_{i,x}.
\end{equation}
for every $x$ and $y$ in the same $\cM_i$. Similarly, the fact that Lebesgue measure on unstable
leaves has constant Jacobian for $A$ implies that the same is true for the reference measures of $f$:
if $f(\cM_i)$ intersects the interior of $\cM_j$ then
\begin{equation}\label{eq_constant_Jacobian}
f_*\left(\nu^u_{i,x} \mid_{f^{-1}\left(\xi_j^u(f(x))\right)}\right)
= \nu^u_{i,x}\left(f^{-1}\left(\xi_j^u(f(x))\right)\right) \nu^u_{j,f(x)}
\end{equation}
for every $x\in\cM_i\cap f^{-1}(\cM_j)$.

\begin{remark}\label{r.lowboundary}
Properties \eqref{eq_cs-invariant} and \eqref{eq_constant_Jacobian} imply that
$x \mapsto \nu^u_{i,x}(f^{-1}\xi_j^u(f(x)))$ is constant on $\cM_i \cap f^{-1}(\cM_j)$,
for any $i$ and $j$ such that $f(\cM_i)$ intersects the interior of $\cM_j$.
Thus this function takes only finitely many values.
\end{remark}

\begin{remark}\label{r.equivalentmeasures}
Let $x$ be on the boundary of two different Markov sets $\cM_i$ and $\cM_j$.
Then the restrictions of $\nu^u_{i,x}$ and $\nu^u_{j,x}$ to the intersection
$\xi^u_i(x) \cap \xi^u_j(x)$ are equivalent measures,
as they are both mapped by $\pi_*$ to multiples of the Lebesgue measure on
$W^u_i(\pi(x)) \cap W^u_j(\pi(x))$.
\end{remark}

\begin{remark}\label{r.zeroboundary}
As observed before, the intersection of $\partial^s\cR$ with every
unstable plaque $W^u_i(x)$ has zero Lebesgue measure inside the plaque.
Since $\pi$ sends each $\cM_i$ to $\cR_i$, with each $\xi^u(x)$ mapped homeomorphically to $W^u_i(\pi(x))$,
it follows that $\partial^s\cM \cap \xi^u_i(x)$ has zero $\nu^u_{i,x}$-measure for every $x\in\cM_i$ and every $i$.
\end{remark}

\subsection{$c$-Gibbs $u$-states}

An $f$-invariant probability measure $\mu$ is called a \emph{$c$-Gibbs $u$-state}
if its conditional probabilities along strong-unstable leaves coincide with
the family of reference measures $\nu_{i,x}^u$.
More precisely, for each $i$, let $\{\mu^u_{i,x}: x \in \cM_i\}$ denote the disintegration of
the restriction $\mu\mid_{\cM_i}$ relative to the partition $\{\xi_i^u(x): x\in \cM_i\}$.
Then we call $\mu$ a \emph{$c$-Gibbs $u$-state} if $\mu^u_{i,x}=\nu_{i,x}^u$ for $\mu$-almost every $x$.
The space of invariant $c$-Gibbs $u$-states of $f$ is denoted by $\Gibbs^u_c(f)$.

\begin{proposition}[Corollary~3.7 and Proposition~4.1 in \cite{UVYY1}]\label{p.gibbs}
$\Gibbs^u_c(f)$  is non-empty, convex, and compact. Moreover,
\begin{enumerate}
\item almost every ergodic component of any $\mu \in \Gibbs^u_c(f)$
is a $c$-Gibbs $u$-state;
\item the support of every $\mu \in \Gibbs^u_c(f)$ is $u$-saturated;
\item for every $x \in \cM_i$ and $l \in\{1, \dots, k\}$,
every accumulation point of the sequence
$$
\mu_n = \frac{1}{n}\sum_{j=0}^{n-1} f^j_* \nu^u_{l,x}
$$
is a $c$-Gibbs $u$-state.
\item an $f$-invariant probability measure $\mu$
is a measure of maximal $u$-entropy if and only if it is a $c$-Gibbs $u$-state.

\item the union $\bigcup_{i=1}^k \partial \cM_i$ of the boundary sets has measure zero
with respect to every $c$-Gibbs $u$-state.
\end{enumerate}
\end{proposition}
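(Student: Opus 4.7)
The plan is to establish each of the five enumerated items in turn; the basic properties of $\Gibbs^u_c(f)$ (non-emptiness, convexity, compactness) will fall out along the way. The main tools are Rokhlin's disintegration theorem, the Markov identity \eqref{eq.Markov1}, the holonomy invariance \eqref{eq_cs-invariant}, and most importantly the constant-Jacobian identity \eqref{eq_constant_Jacobian} for the reference measures $\nu^u_{i,x}$.

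I would begin with item (3), which is the source of non-emptiness. The idea adapts the classical Pesin--Sinai construction of Gibbs $u$-states. By \eqref{eq.Markov1}, for each admissible word $(l,i_1,\dots,i_n)$ the piece of $f^n(\xi^u_l(x))$ lying in $\cM_{i_n}$ is a disjoint union of whole strong-unstable plaques $\xi^u_{i_n}(y)$; iterating \eqref{eq_constant_Jacobian} shows that the restriction of $f^n_*\nu^u_{l,x}$ to each such plaque is a scalar multiple of $\nu^u_{i_n,y}$. Averaging over $n$ and passing to a weak-$*$ accumulation point $\mu$, a comparison of disintegrations gives that the conditional probabilities of $\mu$ along strong-unstable plaques agree with the reference measures, so $\mu\in\Gibbs^u_c(f)$. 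Convexity of $\Gibbs^u_c(f)$ is immediate from linearity of disintegration, and compactness reduces to closedness, which follows because the continuity of the family $x\mapsto\nu^u_{i,x}$ (inherited from the continuity of $\Leb^u_{i,\pi(x)}$) makes the defining identity stable under weak-$*$ limits, modulo boundary effects handled by item (5).

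Items (1), (2), and (5) are then comparatively direct. For (5), Remark~\ref{r.zeroboundary} gives $\nu^u_{i,x}(\partial^s\cM\cap\xi^u_i(x))=0$, so integrating the disintegration yields $\mu(\partial^s\cM)=0$; the identity $\mu(\partial^u\cM)=0$ follows because $\partial^u\cM$ is backward invariant while $\mu$ is $f$-invariant, which forces its $\mu$-measure to be zero. For (1), essential uniqueness of the Rokhlin disintegration gives that the conditional probabilities of almost every ergodic component of $\mu$ coincide $\mu$-a.e.\ with those of $\mu$, hence are equal to $\nu^u_{i,x}$. For (2), each reference measure has full support in $\xi^u_i(x)$, so $\supp\mu$ contains entire strong-unstable plaques through a full-measure set; iterating $f$ and using that each strong-unstable leaf is a union of such plaques gives $u$-saturation.

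The hardest step is item (4). Here the plan is to invoke the Ledrappier--Young type entropy formula for partial entropy along $\cF^{uu}$ (see \cite{LeS82,Led84a,LeY85a,Yan21}), which expresses $h_\mu(f,\cF^{uu})$ as the $\mu$-integral of the logarithm of the Jacobian of $f^{-1}$ along strong-unstable leaves computed with respect to the conditional probabilities $\mu^u_{i,x}$. Because $\pi$ conjugates the strong-unstable dynamics of $f$ to the unstable dynamics of the hyperbolic linear map $A$, and because the reference measures $\nu^u_{i,x}$ project to normalized Lebesgue on the unstable plaques of $A$, the Jacobian of $f$ with respect to $\{\nu^u_{i,x}\}$ is piecewise constant, and by (H1)--(H3) its logarithmic mean matches the topological $u$-entropy $h(f,\cF^{uu})$. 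A Jensen-type inequality in the entropy formula then gives $h_\mu(f,\cF^{uu}) \le h(f,\cF^{uu})$ with equality if and only if $\mu^u_{i,x}=\nu^u_{i,x}$, i.e.\ precisely when $\mu$ is $c$-Gibbs. The main obstacle will be choosing measurable partitions subordinate to $\cF^{uu}$ that respect the Markov structure simultaneously under all iterates of $f$, and verifying that the boundary sets do not disturb the formula; item (5) controls the latter issue.
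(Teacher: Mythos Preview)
The paper does not actually prove this proposition: as the attribution in the header indicates, it is quoted verbatim from Corollary~3.7 and Proposition~4.1 of \cite{UVYY1}, and the present paper simply uses it as a black box. So there is no in-text proof to compare your proposal against. That said, your sketch is a plausible reconstruction of the standard argument and is broadly along the lines one expects in \cite{UVYY1}: the Pesin--Sinai averaging for (3), the Ledrappier--Young/Jensen argument for (4), and disintegration plus Remark~\ref{r.zeroboundary} for the $\partial^s$ half of (5) are all correct in outline.

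There is, however, a genuine gap in your treatment of item~(5). You assert that $\mu(\partial^u\cM)=0$ ``because $\partial^u\cM$ is backward invariant while $\mu$ is $f$-invariant, which forces its $\mu$-measure to be zero.'' Backward invariance together with $f$-invariance of $\mu$ does \emph{not} force zero measure: it only gives $\mu(f^{-n}(\partial^u\cM))=\mu(\partial^u\cM)$ for all $n$, which is vacuous without additional input (e.g.\ $M$ itself is backward invariant). Unlike $\partial^s\cM$, the set $\partial^u\cM$ consists of entire strong-unstable plaques, so the disintegration along $\xi^u_i$ cannot help directly. The intended argument goes through the fact (\cite[Corollary~3.5]{UVYY1}, also used later in this paper) that $\pi_*\mu$ equals Lebesgue measure on $\TT^d$, whence $\mu(\partial^u\cM)=\mu(\pi^{-1}(\partial^u\cR))=\Leb(\partial^u\cR)=0$. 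You should either invoke that result or supply an independent argument for it; note also that your compactness claim for $\Gibbs^u_c(f)$ relies on (5) to control boundary effects, so this gap propagates.
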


\subsection{$c$-mostly contracting center}

We say that $f$ has \emph{$c$-mostly contracting center} if
\begin{equation}\label{eq.c_mostly_contracting}
\limsup_n \frac 1n \log \|Df^n \mid_{E^{cs}}\| < 0
\end{equation}
on a positive measure subset relative to every reference measure.
See \cite{UVYY1}.
The following proposition, together with part (4) of Proposition~\ref{p.gibbs}
shows that this is equivalent to the definition given in the Introduction.

\begin{proposition}[Proposition~4.2 of \cite{UVYY1}]\label{p.contracting}
$f$ has $c$-mostly contracting center if and only if all center-stable
Lyapunov exponents of every ergodic $c$-Gibbs $u$-state of $f$ are negative.
\end{proposition}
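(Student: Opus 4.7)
The plan. Work with the $f$-invariant function $\psi(y)=\limsup_n \tfrac{1}{n}\log\|Df^n|_{E^{cs}_y}\|$ (invariance follows from the uniform boundedness of $\|Df^{\pm 1}\|$) and the $f$-invariant Borel set $B=\{\psi<0\}$; the proposition will follow from relating $\mu(B)$ for ergodic $c$-Gibbs $u$-states $\mu$ to $\nu(B)$ for reference measures $\nu$. For $(\Rightarrow)$, let $\mu$ be an ergodic $c$-Gibbs $u$-state. The Rokhlin disintegration of $\mu|_{\cM_i}$ along the partition $\{\xi_i^u(y):y\in\cM_i\}$ returns the reference measures $\nu_{i,y}^u$ as conditionals, by the defining property of $c$-Gibbs $u$-states. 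The hypothesis forces $\mu(B)>0$ by integration, and ergodicity promotes this to $\mu(B)=1$; by Kingman's subadditive theorem, $\psi$ equals the top $E^{cs}$-Lyapunov exponent of $\mu$ $\mu$-almost everywhere, so that exponent is negative and hence so are all $E^{cs}$-exponents.

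For $(\Leftarrow)$, fix $\nu=\nu_{i,x}^u$ and form the Ces\`aro averages $\mu_n=\tfrac{1}{n}\sum_{j=0}^{n-1}f_*^j\nu$. By Proposition~\ref{p.gibbs}(3) every weak$^*$ accumulation point $\mu$ is a $c$-Gibbs $u$-state, and by Proposition~\ref{p.gibbs}(1) its ergodic components are $c$-Gibbs $u$-states, hence have negative top $E^{cs}$-exponents by the hypothesis; so $\mu(B)=1$. Disintegrating $\mu|_{\cM_{j_0}}$ for any $j_0$ with $\mu(\cM_{j_0})>0$, we produce some $y_0\in\cM_{j_0}$ with $\nu_{j_0,y_0}^u(B)=1$. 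To transfer this back to $\nu_{i,x}^u$, combine three ingredients: (i) the Markov property~\eqref{eq.Markov1} together with the topological mixing $f$ inherits from the Anosov factor $A$, which for $n$ large forces $f^n(\xi_i^u(x))$ to contain a strong-unstable plaque $\xi_{j_0}^u(w)$ with $w$ in the same center-stable plaque as $y_0$; (ii) the center-stable holonomy invariance of reference measures~\eqref{eq_cs-invariant}, which pushes $\nu_{j_0,y_0}^u$ to $\nu_{j_0,w}^u$; and (iii) the constant-Jacobian relation~\eqref{eq_constant_Jacobian}, which makes $f^n_*\nu|_{\xi_{j_0}^u(w)}$ a positive multiple of $\nu_{j_0,w}^u$. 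Since $B$ is $f$-invariant, one concludes $\nu(B)=f^n_*\nu(B)>0$.

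\textbf{The main obstacle} is step (ii): showing that the transferred measure $\nu_{j_0,w}^u$ still charges $B$ positively, which amounts to the Pesin-type statement that the center-stable holonomy $H^{cs}_{y_0,w}$ essentially preserves $\{\psi<0\}$. \emph{A priori} the value of $\psi$ at a point and at its holonomic image can differ, since two points on a common center-stable leaf need not have close forward orbits. The resolution is that once $\psi(z)<0$ a Pesin-block / graph-transform estimate produces a local stable manifold at $z$ inside the center-stable leaf, and by refining $\cM$ the ambient center-stable plaques lie inside this local stable for a positive-measure collection of Pesin-regular points, so the holonomy preserves the negativity of $\psi$ on a set large enough to run the transfer. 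The remaining ingredients---ergodic decomposition of $c$-Gibbs $u$-states, compactness of $\Gibbs^u_c(f)$, and the disintegration formula---are already packaged in Proposition~\ref{p.gibbs} and equations~\eqref{eq_cs-invariant}--\eqref{eq_constant_Jacobian}.
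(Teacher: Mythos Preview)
The paper does not prove this proposition; it is quoted verbatim from \cite{UVYY1}, so there is no argument in the present paper to compare against. What follows is an assessment of your sketch on its own merits.

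Your $(\Rightarrow)$ direction is correct and essentially the standard argument: disintegrate the ergodic $c$-Gibbs $u$-state along $\xi^u_i$, use that the conditionals are reference measures, integrate the hypothesis to get $\mu(B)>0$, and upgrade via ergodicity and Kingman.

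Your $(\Leftarrow)$ direction has the right architecture --- Ces\`aro limits give a $c$-Gibbs $u$-state with $\mu(B)=1$, disintegration produces $y_0$ with $\nu^u_{j_0,y_0}(B)=1$, and one transfers this back to $\nu^u_{i,x}$ via \eqref{eq_constant_Jacobian} and $f$-invariance of $B$ --- but your mechanism for the transfer step is not right. You propose to ``refine $\cM$'' so that center-stable plaques fit inside Pesin local stable manifolds. This does not work: the plaque $\xi^{cs}_{j_0}(z)=\pi^{-1}(W^s_{j_0}(\pi(z)))$ always contains the entire fiber $\pi^{-1}(\pi(z))$, and refining the Markov partition of $A$ shrinks $W^s_{j_0}$ but not the fibers of $\pi$. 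So center-stable plaques cannot be made uniformly small this way.

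The correct fix keeps your Pesin idea but changes the mechanism: first pass to a Pesin block $\Lambda_\ell\subset B$ with $\nu^u_{j_0,y_0}(\Lambda_\ell)>0$ on which local stable manifolds have uniform size $\delta$. Then, rather than refining $\cM$, choose $w$ close to $y_0$. This is possible because $y_0\in\supp\mu$ and $\mu$ is a weak$^*$ accumulation point of the Ces\`aro averages of $f^j_*\nu^u_{i,x}$: any open neighborhood $U$ of $y_0$ has $\mu(U)>0$, hence some iterate $f^n(\xi^u_i(x))$ meets $U$, and by the Markov property it does so in a full plaque $\xi^u_{j_0}(w)$ with $w\in U$. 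For $w$ close enough to $y_0$ the holonomy $H^{cs}_{y_0,w}$ moves points by less than $\delta$ (continuity of the product structure $[\cdot,\cdot]$ and the fact that nearby points in $\xi^{cs}_{j_0}(z)$ lie on the same $\cF^{cs}$-leaf, since $\pi$ is a homeomorphism on strong-unstable leaves). Then $H^{cs}_{y_0,w}(\Lambda_\ell)\subset B$, so by \eqref{eq_cs-invariant} $\nu^u_{j_0,w}(B)\ge\nu^u_{j_0,y_0}(\Lambda_\ell)>0$, and pulling back via \eqref{eq_constant_Jacobian} gives $\nu^u_{i,x}(B)>0$.
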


\subsection{Invariant transverse measures}\label{s.transverse_invariant}

By a \emph{transverse measure}\footnote{All transverse measures are taken
to be not identically zero, unless stated otherwise.} of a foliation $\cF$,
we mean a family $\tau=\{\hat\mu_S: S \in \cS\}$ of measures, where $\cS$
is a family of small cross-sections $S$ to the foliation such that
\begin{itemize}
\item every $x \in M$ belongs to some $S\in\cS$;
\item if $S \in \cS$ and $S'\subset S$ is a measurable subset then $S'\in\cS$; 
\end{itemize}
and each $\mu_S$ is a non-negative Borel measure on the corresponding cross-section $S$.
We call the transverse measure \emph{invariant} if every holonomy homeomorphism
$h:S_1 \to S_2$ of $\cF$ between cross-sections $S_1, S_2 \in\cS$ maps the measure
$\hat\mu_{S_1}$ to the measure $\hat\mu_{S_2}$.
This follows Bowen, Marcus~\cite{BoM77}, where this notion is called 
\emph{$\cG$-invariant measure}.

The family of (invariant) transverse measures is preserved by any re-scaling
$\hat\mu_S\mapsto c\hat\mu_S$ with $c$ independent of $S$.
It is also clear that if $\tau_1$ and $\tau_2$ are (invariant) transverse measures
defined on the same family $\cS$ of cross-sections, then $\tau_1+\tau_2$ is again an
(invariant) transverse measure.

We call an invariant transverse measure $\tau =\{\hat\mu_S: S \in \cS\}$ \emph{ergodic}
if given any splitting $\tau = \tau' + \tau''$ as a sum of two invariant transverse measures $\tau'$
and $\tau''$ there exists $c \in (0,1)$ such that $\tau' = c\tau$.
We say that an invariant transverse measure  $\tau$ has \emph{finite ergodic decomposition}
if it is a sum of finitely many ergodic invariant transverse measures.

\begin{lemma}\label{l.ergodictransverse1}
If $\tau_1$ and $\tau_2$ are ergodic invariant transverse measures which are non-singular
restricted to some cross-section then there is $c>0$ such that $\tau_2=c\tau_1$.
\end{lemma}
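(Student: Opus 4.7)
The plan is to use the ergodicity of $\tau_2$ to show that $\tau_2$ is absolutely continuous with respect to $\tau_1$ on every cross-section, and then the ergodicity of $\tau_1$ to show that the corresponding Radon-Nikodym density is essentially a constant, independent of the cross-section.

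Fix a common family $\cS$ of cross-sections on which both $\tau_1=\{\hat\mu^1_S\}_{S\in\cS}$ and $\tau_2=\{\hat\mu^2_S\}_{S\in\cS}$ are defined, and for each $S\in\cS$ take the Lebesgue decomposition
$$
\hat\mu^2_S = \phi_S\,\hat\mu^1_S + \sigma_S,
$$
with $\sigma_S\perp\hat\mu^1_S$. The non-singularity hypothesis at $S_0$ is precisely the statement that $\phi_{S_0}\hat\mu^1_{S_0}\neq 0$. Since every holonomy $h:S_1\to S_2$ pushes $\hat\mu^i_{S_1}$ forward to $\hat\mu^i_{S_2}$ for $i=1,2$, the uniqueness of the Lebesgue decomposition implies that the families $\tau_2^{ac}=\{\phi_S\hat\mu^1_S\}_{S\in\cS}$ and $\tau_2^{sing}=\{\sigma_S\}_{S\in\cS}$ are themselves invariant transverse measures, and $\tau_2=\tau_2^{ac}+\tau_2^{sing}$. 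Applying the ergodicity of $\tau_2$ to this splitting, the piece $\tau_2^{ac}$ is nonzero, so if $\tau_2^{sing}$ were also nonzero then ergodicity would force $\tau_2^{ac}=c\tau_2$ and $\tau_2^{sing}=(1-c)\tau_2$ for some $c\in(0,1)$; restricted to $S_0$, this would make the nonzero measure $\hat\mu^2_{S_0}$ simultaneously absolutely continuous and singular with respect to $\hat\mu^1_{S_0}$, which is impossible. Hence $\tau_2^{sing}=0$ and $\hat\mu^2_S=\phi_S\hat\mu^1_S$ on every cross-section.

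The invariance of $\tau_1$ and $\tau_2$ together with the identity $\hat\mu^2_S=\phi_S\hat\mu^1_S$ forces $\phi_{S_2}\circ h=\phi_{S_1}$ almost everywhere, for every holonomy $h:S_1\to S_2$. Consequently, for each $t\ge 0$, the families
$$
\tau^t=\{\hat\mu^1_S|_{\{\phi_S>t\}}\}_{S\in\cS}\quand\tau^{t,c}=\{\hat\mu^1_S|_{\{\phi_S\le t\}}\}_{S\in\cS}
$$
are invariant transverse measures summing to $\tau_1$. The same argument as before, applied now to $\tau_1$, shows that at most one of $\tau^t$ and $\tau^{t,c}$ can be nonzero for each fixed $t$: otherwise ergodicity would make them both proportional to $\tau_1$, but their restrictions to every cross-section live on disjoint sets, forcing one coefficient to vanish. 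Setting
$$
c=\sup\{t\ge 0 : \hat\mu^1_S(\{\phi_S\le t\})=0\text{ for every }S\in\cS\},
$$
we obtain $\phi_S\equiv c$ $\hat\mu^1_S$-a.e.\ on every $S$; since $\tau_2\neq 0$ by convention, $c$ is positive and $\tau_2=c\tau_1$.

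The main technical point is to verify that the Lebesgue-decomposition pieces $\tau_2^{ac},\tau_2^{sing}$ and the super-/sub-level-set restrictions of $\phi$ actually assemble into invariant transverse measures. Both reduce to uniqueness of the Lebesgue decomposition combined with the holonomy-invariance of $\phi$; once this bookkeeping is in place, the ergodicity hypotheses on $\tau_1$ and $\tau_2$ close the argument with no further work.
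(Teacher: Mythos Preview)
Your proof is correct and follows essentially the same approach as the paper's: both use holonomy-invariance of the Radon--Nikodym data to build invariant transverse sub-measures from level sets, then invoke ergodicity to force those sub-measures to be zero or full. Your version is in fact slightly more careful than the paper's, since you first use ergodicity of $\tau_2$ on the Lebesgue decomposition to kill the singular part before analysing the density, whereas the paper works directly with the Radon--Nikodym derivative without explicitly disposing of a possible singular component.
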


\begin{proof}
For each cross-section $S$, let $\hat\mu_{1,S}$ and $\hat\mu_{2,S}$ be the measures defined on $S$ by $\tau_1$ and $\tau_2$, respectively.
Given any rational numbers $p < q$, let $[p,q]_S \subset S$ be the set of points where the Radon-Nykodim derivatives
satisfies $d\hat\mu_{2,S}/d\hat\mu_{1,S} \in [p,q]$. 
Observe that the Radon-Nykodim derivatives are invariant under holonomies, since the measures themselves are.
Thus, the family of sets $[p,q]_S$ is invariant under holonomy, and so the families of restrictions of the
$\hat\mu_{1,S}$ and $\hat\mu_{2,S}$ to the $[p,q]_S$ define invariant transverse measures $\tau_1'$ and $\tau_2'$
such that $\tau_1' \le \tau_1$ and $\tau_2' \le \tau_2$.
We claim that either $\tau_1'$ vanishes or $\tau_1'=\tau_1$: otherwise, by ergodicity, there would exist $c \in (0, 1)$
such that $\tau_1'=c\tau_1$, and this is not possible because $\tau_1'$ is a restriction of $\tau_1$.
This ensures that, given any $p < q$, the set $[p,q]_S$ has either zero $\hat\mu_{1,S}$-measure for every $S$
or full $\hat\mu_{1,S}$-measure for every $S$.
This implies that there exists $c>0$ such that Radon-Nykodim derivative $d\hat\mu_{2,S}/d\hat\mu_{1,S}=c$ 
at $\hat\mu_{1,S}$-almost every point of every cross-section $S$. That implies the claim.
\end{proof}

Two invariant transverse measures $\tau_1=\{\hat\mu_{1,S}\}$ and $\tau_2=\{\hat\mu_{2,S}\}$ are said to be
\emph{mutually singular} if the measures $\hat\mu_{1,S}$ and $\hat\mu_{2,S}$ are mutually singular for every $S$.

\begin{lemma}\label{l.ergodictransverse2}
If $\tau$ is a non-ergodic invariant transverse measure then there exists a splitting $\tau = \tau_1  + \tau_2$
into mutually singular invariant transverse measures.
\end{lemma}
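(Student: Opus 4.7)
The plan is to apply a Radon--Nikodym argument analogous to the one used in Lemma~\ref{l.ergodictransverse1}, but now to extract from an arbitrary non-trivial splitting a splitting into mutually singular pieces. I would first invoke the hypothesis that $\tau$ is non-ergodic to obtain a decomposition $\tau = \tau' + \tau''$ into invariant transverse measures $\tau' = \{\hat\mu'_S\}$ and $\tau'' = \{\hat\mu''_S\}$ such that $\tau'$ is not of the form $c\tau$ for any $c \in (0,1)$. Since $\hat\mu'_S \le \hat\mu_S$ on each cross-section $S$, the Radon--Nikodym derivative $g_S = d\hat\mu'_S/d\hat\mu_S$ is well-defined $\hat\mu_S$-almost everywhere and takes values in $[0,1]$.

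Next, arguing exactly as in Lemma~\ref{l.ergodictransverse1}, for every holonomy homeomorphism $h:S_1\to S_2$ the identities $h_*\hat\mu_{S_1}=\hat\mu_{S_2}$ and $h_*\hat\mu'_{S_1}=\hat\mu'_{S_2}$ combine (via the chain rule for Radon--Nikodym derivatives) to give $g_{S_2}\circ h = g_{S_1}$ $\hat\mu_{S_1}$-almost everywhere. Consequently, for each $t \in (0,1)$ the family of level sets $A_S^t = \{x\in S : g_S(x) \le t\}$ and their complements $B_S^t = S\setminus A_S^t$ are preserved by holonomy up to measure zero. The restrictions $\tau_1^t = \{\hat\mu_S|_{A_S^t}\}$ and $\tau_2^t = \{\hat\mu_S|_{B_S^t}\}$ are therefore invariant transverse measures, they sum to $\tau$, and they are mutually singular by construction (their supports on each $S$ are disjoint).

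The one remaining step is to choose $t$ so that both $\tau_1^t$ and $\tau_2^t$ are nonzero. I expect this to be the most delicate point of the proof, though it is still routine. If no such $t$ existed, then for every $t\in(0,1)$ either $g_S > t$ almost everywhere on every $S$, or $g_S \le t$ almost everywhere on every $S$. Since the former condition is a down-set in $t$ and the latter is an up-set, and their union is all of $(0,1)$, they must be separated by a single threshold $c \in [0,1]$; a brief case analysis (using $\tau \neq 0$ to rule out overlap of the two conditions) then forces $g_S = c$ $\hat\mu_S$-almost everywhere on every $S$, so $\tau' = c\tau$, contradicting the choice of $\tau'$. Setting $\tau_1 = \tau_1^t$ and $\tau_2 = \tau_2^t$ for a valid $t$ completes the proof.
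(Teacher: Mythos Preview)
Your proposal is correct and follows essentially the same approach as the paper: both exploit holonomy-invariance of Radon--Nikodym derivatives to produce holonomy-invariant level sets, then restrict $\tau$ to these sets to obtain the mutually singular pieces. The only cosmetic difference is that you work with $d\hat\mu'_S/d\hat\mu_S \in [0,1]$ (which is automatically well-defined since $\hat\mu'_S \le \hat\mu_S$), whereas the paper uses the extended derivative $d\hat\mu'_{2,S}/d\hat\mu'_{1,S} \in [0,\infty]$; your choice is arguably cleaner for exactly this reason.
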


\begin{proof}
The assumption means that there is some splitting $\tau=\tau_1'+\tau_2'$ and neither $\tau_1'=\{\hat\mu'_{1,S}\}$
nor $\tau_2'=\{\hat\mu'_{2,S}\}$ are multiples of $\tau$. For each $S\in\cS$, let
$$
\hat\mu'_{2,S}=\hat\mu'_{2,S,s}+\hat\mu'_{2,S,a}
$$
be the Lebesgue decomposition of $\hat\mu'_{2,S}$ with respect to $\hat\mu'_{1,S}$, where $\hat\mu'_{2,S,s}$
denotes the singular part and $\hat\mu'_{2,S,a}$ denotes the absolutely continuous part.
There are two cases to consider. If $\hat\mu'_{2,S,s}\neq 0$ for some $S_0\in\cS$ then we may take
$\tau_1=\{\hat\mu_{1,S}\}$ and $\tau_2=\{\hat\mu_{2,S}\}$ with
$$
\hat\mu_{1,S}=\hat\mu'_{1,S}+\hat\mu_{2,S,a}
\text{ and } 
\hat\mu_{2,S}=\hat\mu'_{2,S,s}.
$$
Then $\tau_1$ and $\tau_2$ are (non-vanishing) invariant transverse measures with $\tau=\tau_1+\tau_2$,
and the construction immediately gives that they are mutually singular.

Now suppose that $\hat\mu'_{2,S,s} = 0$ or, in other words,  $\hat\mu'_{2,S}$ is absolutely continuous
with respect to $\hat\mu'_{1,S}$  for every $S\in\cS$.
For each cross-section $S$ and any interval $I\subset[0,\infty]$, denote by $I_S$ the set of
points where the Radon-Nykodim derivative satisfies $d\hat\mu'_{2,S}/d\hat\mu'_{1,S} \in I$.
Keep in mind that the Radon-Nykodim derivatives are invariant under holonomies, since the measures themselves
are, and so the family of sets $I_S$ is also invariant under holonomy.
The fact that $\tau_1'$ and $\tau_2'$ are not multiples of $\tau$ ensures that there exist $p\in(0,\infty)$
and a cross-section $S_0$ such that
$$
\hat\mu_{1,S_0}([0,p]_{S_0})>0 \text{ and } \hat\mu_{1,S_0}((p,\infty]_{S_0})>0.
$$
Let $\tau_1$ and $\tau_2$ be the restrictions of $\tau$ to $\{[0,p]_S: S \in \cS\}$ and
$\{(p,\infty]_S: S \in \cS\}$.
Once more, $\tau_1$ and $\tau_2$ are non-vanishing invariant transverse measures with $\tau=\tau_1+\tau_2$,
and they are mutually singular.
\end{proof}

\section{Proof of Theorem~\ref{main.A}}\label{s.proofA}

Let $\mu$ be any measure of maximal $u$-entropy of $f$. By Proposition~\ref{p.gibbs},
$\mu$ is a $c$-Gibbs $u$-state and its support is $u$-saturated.
Let $x_i$ and $x_j$ be two points in the support of $\mu$ contained in the same strong-unstable
leaf and in the interior of Markov partition elements $\cM_i$ and $\cM_j$.
See Figure~\ref{fig:holonomy1}.
Keep in mind that the boundaries of the Markov elements have zero $\mu$-measure,
by Proposition~\ref{p.gibbs}(5).
We denote by $\hat\mu_i$ the projection of $\mu \mid \cM_i$ to $\xi^{cs}_i(x_i)$ under
strong-unstable holonomy inside $\cM_i$, and similarly for $\hat\mu_j$.

\begin{figure}[ht]
\psfrag{xi}{\footnotesize{$x_i$}}\psfrag{xj}{\footnotesize{$x_j$}}
\psfrag{Mi}{$\cM_i$}\psfrag{Mj}{$\cM_j$}
\psfrag{Ei}{\footnotesize{$E$}}\psfrag{Ej}{\footnotesize{$H_{i,j}(E)$}}
\centering
\includegraphics[width=4in]{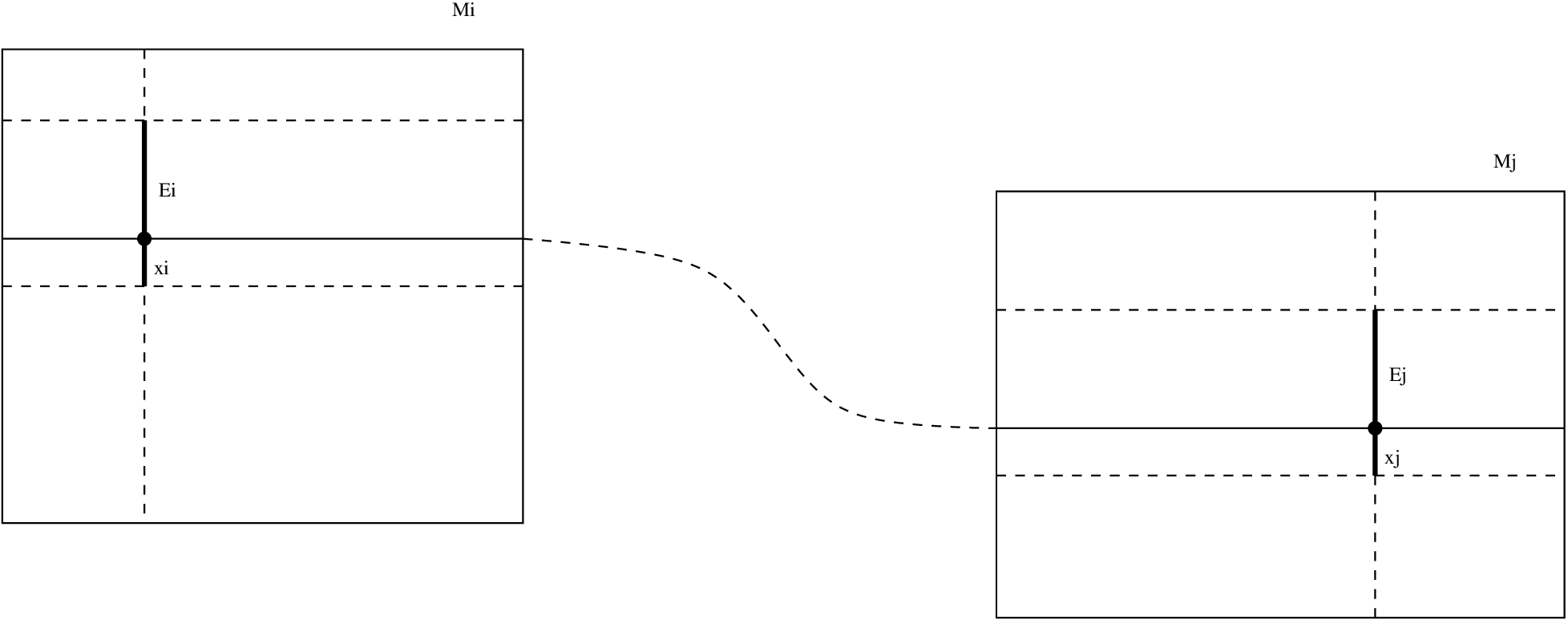}
\caption{\label{fig:holonomy1} Strong-unstable holonomies associated to
points $x_i$ and $x_j$, possibly far away from each other, on the same
strong-unstable leaf.}
\end{figure}

Let $H$ be the strong-unstable holonomy map from a small neighborhood of $x_i$ to a neighborhood
of $x_j$. We denote by $JH_{i,j}$ the Jacobian of $H$ with respect to the measures
$\hat\mu_i$ and $\hat\mu_j$. Let $\Leb^u$ denote (non-normalized) Lebesgue measure along unstable leaves.
For each $i=1, \dots, k$, define
$$
c_i = \frac{1}{\Leb^u(\pi\xi^u_i(x_i))}
$$
for any $x_i\in\cM_i$. This does not depend on the choice of the point $x_i$ because the volume
of the unstable plaques for the linear Anosov map $A$ is constant on each Markov rectangle $\cR_i$.

\begin{lemma}\label{l.invariance}
The Jacobian $JH_{i,j}$ is constant equal to ${c_i}/{c_j}$ for any $i$ and $j$.
\end{lemma}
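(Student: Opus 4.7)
\emph{Proof plan.} The plan is to transfer the question to the linear Anosov $A$ via the factor map $\pi$, where the holonomies become affine and the Jacobian computation is explicit. First, since $\mu$ is a $c$-Gibbs $u$-state, its conditionals along strong-unstable plaques are the reference measures $\nu^u_{i,x}$, which push forward under $\pi$ to normalized Lebesgue on the $A$-unstable plaques $W^u_i(\pi(x))$. Hence $\pi_*\mu$ is an $A$-invariant probability with Lebesgue conditionals along unstable leaves, forcing $\pi_*\mu=\Leb_{\TT^d}$; by restriction, $\pi_*(\mu\mid\cM_i)=\Leb\mid\cR_i$. The strong-unstable projection $p_i\colon\cM_i\to\xi^{cs}_i(x_i)$ is conjugated by $\pi$ to the $A$-unstable projection $\bar p_i\colon\cR_i\to W^s_i(\pi(x_i))$, so $\pi_*\hat\mu_i=(\bar p_i)_*(\Leb\mid\cR_i)$. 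Because $A$ is linear, the product structure on $\cR_i$ is exact: Lebesgue decomposes as $\gamma\,\Leb^u\otimes\Leb^s$ for a constant $\gamma$ that depends only on the angle between the fixed subspaces $E^u_A$ and $E^s_A$, hence independent of $i$. Fubini then yields
\[
\pi_*\hat\mu_i=\tfrac{\gamma}{c_i}\,\Leb^s\mid W^s_i(\pi(x_i)),
\qquad
\pi_*\hat\mu_j=\tfrac{\gamma}{c_j}\,\Leb^s\mid W^s_j(\pi(x_j)).
\]

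Next, by (H2) the relation $\pi\circ H=\bar H\circ\pi$ holds, where $\bar H$ is the $A$-unstable holonomy between $W^s_i(\pi(x_i))$ and $W^s_j(\pi(x_j))$; since $A$ is linear, $\bar H$ is an affine translation and preserves Lebesgue. Combining,
\[
\pi_*(H_*\hat\mu_i)=\bar H_*\pi_*\hat\mu_i=\tfrac{\gamma}{c_i}\,\Leb^s\mid W^s_j(\pi(x_j))=\tfrac{c_j}{c_i}\,\pi_*\hat\mu_j,
\]
so at the level of the $\pi$-projections the Jacobian of $H$ is already the constant $c_i/c_j$.

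It remains to lift this to the upstairs identity $H_*\hat\mu_i=(c_j/c_i)\hat\mu_j$ on $\xi^{cs}_j(x_j)$, giving $JH_{i,j}\equiv c_i/c_j$. The clean case is a small $\pi$-saturated set $E=\pi^{-1}(B)\cap\xi^{cs}_i(x_i)$ with $B\subset W^s_i(\pi(x_i))$ Borel: using $\pi\circ p_i=\bar p_i\circ\pi$ one checks $p_i^{-1}(E)=\pi^{-1}(\bar p_i^{-1}B)\cap\cM_i$, and similarly $p_j^{-1}(HE)=\pi^{-1}(\bar p_j^{-1}\bar H(B))\cap\cM_j$; these are themselves $\pi$-saturated in their Markov rectangles, so the identities $\hat\mu_i(E)=\mu(p_i^{-1}E)$ and $\hat\mu_j(HE)=\mu(p_j^{-1}(HE))$ (from the $c$-Gibbs property) combined with $\pi_*\mu=\Leb$ give $\hat\mu_i(E)=(\gamma/c_i)\Leb^s(B)$ and $\hat\mu_j(HE)=(\gamma/c_j)\Leb^s(B)$, whose ratio is exactly $c_i/c_j$. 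The main obstacle is extending this identity from $\pi$-saturated test sets to arbitrary Borel ones: because $\pi$ may collapse a center direction, equality of $\pi$-projections is a priori weaker than the upstairs equality, and the RN derivative could vary along $\pi$-fibers. The extension relies on the cs-invariance of the reference measures (equation \eqref{eq_cs-invariant}), which forces the local product structure $\mu\mid\cM_i=\nu^u_{i,x_i}\otimes\hat\mu_i$ to be compatible across $\pi$-fibers; this compatibility makes the conditionals of $\hat\mu_i$ and $\hat\mu_j$ along $\pi$-fibers intertwine correctly under $H$, promoting the constant value $c_i/c_j$ computed downstairs to a pointwise constant Jacobian upstairs.
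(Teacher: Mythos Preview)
Your reduction to the linear model via $\pi$ is sound as far as it goes: the identities $\pi\circ p_i=\bar p_i\circ\pi$ and $\pi\circ H=\bar H\circ\pi$ are correct, and they do give $\hat\mu_j(H(E))/\hat\mu_i(E)=c_i/c_j$ for every $\pi$-saturated test set $E$. But the final step, where you promote this to arbitrary Borel $E$, is not a proof. You write that cs-invariance of the reference measures ``makes the conditionals of $\hat\mu_i$ and $\hat\mu_j$ along $\pi$-fibers intertwine correctly under $H$.'' Concretely, this would mean that the strong-unstable holonomy $H$ sends the center-plaque conditional $\hat\mu^c_{i,z}$ to $\hat\mu^c_{j,H(z)}$. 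Nothing in the product decomposition $\mu\mid\cM_i=\nu^u_{i,x_i}\otimes\hat\mu_i$ forces this: that identity constrains the $u$-direction, not how $\hat\mu_i$ sits over $\pi$-fibers. Since $\pi$ collapses precisely the center direction, equality of $\pi$-projections cannot by itself determine the upstairs measures; you are assuming what has to be shown.

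The paper avoids this obstacle by a different mechanism: instead of projecting by $\pi$, it iterates \emph{backward} by $f^{-N}$ until $f^{-N}(x_i)$ and $f^{-N}(x_j)$ lie in the same Markov element $\cM_l$. Because $x_i$ and $x_j$ are on the same strong-unstable leaf, $f^{-N}$ pulls the $u$-tubes over $E$ and over $H(E)$ back to subsets of $\cM_l$ with the \emph{same} center-stable base $B\subset\xi^{cs}_l(x_l)$. Using $f$-invariance of $\mu$ and the Rokhlin disintegration (with conditionals $\nu^u_{l,z}$), both $\hat\mu_i(E)$ and $\hat\mu_j(H(E))$ become integrals over the common base $B$; the integrands differ only by the constant factor coming from the $A$-unstable Jacobian, which is $c_i/c_j$. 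This backward-iteration trick is exactly what lets one compare arbitrary (not $\pi$-saturated) sets, and it is the missing idea in your argument.
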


\begin{proof}
Fix $N\ge 1$ large enough that $f^{-N}(x_i)$ and  $f^{-N}(x_j)$ are contained in the interior
of the same Markov partition element $\cM_l$. 
It is no restriction to assume that the domain $E\subset\xi_i^{cs}(x_i)$ of $H$ is small enough
that $f^{-n}(E)$ and $f^{-n}(H(E))$ are contained in the interior of $\cM_l$.
By the definition of $\hat\mu_i$ and $\hat\mu_j$, and the fact that $\mu$ is $f$-invariant,
$$
\hat\mu_i(E)
= \mu\left(\bigcup_{x\in E} \xi^u_i(x)\right)
= \mu\left(f^{-N} \left(\bigcup_{x\in E} \xi^u_i(x)\right)\right),
$$
and a similar fact holds for $\hat\mu_j\left(H(E)\right)$.
It is clear that the sets $f^{-N} \left(\bigcup_{x\in E} \xi^u_i(x)\right)$ and
$f^{-N} \left(\bigcup_{y \in H(E)} \xi^u_i(y)\right)$ project to the same set
$B\subset\xi^{cs}_l(x_l)$ under the strong-unstable holonomy inside $\cM_l$.
Then, since $\mu$ is a $c$-Gibbs $u$-state, Rokhlin disintegration gives that
\begin{equation}\label{eq.quotient1}
\hat\mu_i(E)
= \int_B \nu^u_{l,z}\left(f^{-N}(\xi^u_{i}(f^N(z)))\right) \, d\hat\mu_l(z) 
\end{equation}
and similarly for $\hat\mu_j\left(H(E)\right)$.
By the definition of the reference measures 
\begin{equation}\label{eq.quotient2}
\nu^u_{l,z}\left(f^{-N}(\xi^u_{i}(f^N(z)))\right)
= \frac{\Leb^u(\pi\left(f^{-N}\xi^u_{i}(f^N(z)))\right)}
{\Leb^u(\pi(\xi^u_l(z)))}.
\end{equation}
Since $f$ is semi-conjugate to the linear automorphism $A$, 
$$
\Leb^u(\pi\left(f^{-N}\xi^u_{i}(f^N(z)))\right)
= \frac{\Leb^u\left(\pi(\xi^u_i(z))\right)}{\det(A^{N} \mid E^u)} 
= \frac{\Leb^u\left(\pi(\xi^u_i(x_i))\right)}{\det(A^{N} \mid E^u)}
= \frac{1}{c_i \det(A^{N} \mid E^u)}.
$$
and, analogously,
$$
\Leb^u(\pi\left(f^{-N}\xi^u_{j}(f^N(w)))\right)
= \frac{1}{c_j \det(A^{N} \mid E^u)}.
$$
Replacing these last two identities in \eqref{eq.quotient1} and \eqref{eq.quotient2},
we find that
$$
\frac{\hat\mu_j(H(E))}{\hat\mu_i(E)}
= \frac{c_i}{c_j}.
$$
Since $E$ is arbitrarily small, this gives the claim of the lemma.
\end{proof}

Next, let $S$ be any cross-section to the strong-unstable foliation inside $\cM_i$.
It is no restriction to assume that $S$ meets every strong-unstable plaque $\xi_i^u(x)$.
Define $\hat\mu_S$ to be the projection of $\mu \mid \cM_i$ to $S$ under
strong-unstable holonomy inside $\cM_i$. It follows directly from Lemma~\ref{l.invariance}
that $\{c_i\hat\mu_S\}$ is an invariant transverse measure.
This proves Theorem~\ref{main.A}.

\section{Proof of Theorem~\ref{main.D}}\label{s.proofD}

In this section, we refer to the center leaf through each $x\in \cM_i$ as its \emph{center plaque},
and denote it as $\xi_i^c(x)$. Note that in the present context, all center leaves are compact, and each 
$\xi_i^c(x)$ is contained in the center-stable plaque $\xi_i^{cs}(x)$.
For each $i$, let $\mu_i$ denote the restriction $\mu \mid \cM_i$.

\begin{lemma}\label{l.finite1}
There is $N \ge 1$ and for each $i$ there exists a full $\mu_i$-measure subset of $\cM_i$ that intersects
each center plaque $\xi_i^c(x)$ on at most $N$ points.
\end{lemma}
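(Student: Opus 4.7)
The plan is to use Pesin theory applied to the center bundle together with compactness of the fibers of $\pi$. By Proposition~\ref{p.gibbs}(4), $\mu$ is an ergodic $c$-Gibbs $u$-state, and hence by Proposition~\ref{p.contracting} all center Lyapunov exponents of $\mu$ are strictly negative. The assumption that $\pi\colon M\to\TT^d$ is a fiber bundle whose fibers are the center leaves, together with the compactness of those fibers, provides the uniformity needed to convert pointwise Pesin estimates into a uniform combinatorial bound.

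First I would invoke the Pesin stable manifold theorem along the center direction: given small $\epsilon>0$ there exist a Pesin block $K_\epsilon\subset M$ with $\mu(K_\epsilon)>1-\epsilon$ and constants $\delta=\delta(\epsilon)>0$ and $\lambda=\lambda(\epsilon)<1$ such that every $x\in K_\epsilon$ admits a local Pesin stable manifold inside its center leaf, $W^c_\delta(x)\subset\cF^c(x)$, of $d_c$-radius $\delta$, and satisfying
$$
d_c(f^n(y),f^n(y'))\le \lambda^n d_c(y,y') \quad\text{for all } y,y'\in W^c_\delta(x),\ n\ge 0.
$$
Then I would use the fiber bundle structure: since $\pi$ is a fiber bundle with compact center fibers, the $d_c$-diameter of every center leaf is uniformly bounded by some $D<\infty$. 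Consequently there is a uniform upper bound $N_0=N_0(\delta,D)$ on the cardinality of any $\delta$-separated subset of any center leaf; set $N:=N_0$.

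Next I would disintegrate $\mu_i$ along the partition by center plaques $\{\xi^c_i(x)\}$ to obtain conditionals $\{\mu^{c,i}_x\}$, and show that for $\mu_i$-a.e.\ $x$ the measure $\mu^{c,i}_x$ is supported on at most $N$ points. Because $f$ preserves the center foliation and $\pi_*\mu$ is $A$-invariant, disintegration is compatible with $f$: $f_*\mu^{c,i}_x$ is a piece of $\mu^{c,j}_{f(x)}$ up to the Markov combinatorics, in particular distinct atoms of $\mu^{c,i}_x$ are pushed to distinct atoms of the conditional at $f(x)$ (weighted by the Jacobian on $\xi^u$-plaques used in the proof of Lemma~\ref{l.invariance}). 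Suppose the conclusion fails on a positive measure set. Then, by pigeonholing against a $\delta$-net in the fiber, there are two distinct points $y\neq z$ of positive conditional mass with $d_c(y,z)<\delta$. Using Birkhoff's ergodic theorem we find arbitrarily large $n$ with $f^n(x),f^n(y),f^n(z)\in K_\epsilon$; the contraction estimate above then forces $d_c(f^n(y),f^n(z))\to 0$. But two distinct atoms of a conditional probability cannot merge under a measure-preserving transformation without losing mass, and this yields the desired contradiction.

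Finally, once atomicity with at most $N$ atoms is established, a standard measurable selection (Kuratowski--Ryll-Nardzewski, or directly noting that the set of atoms of $\mu^{c,i}_x$ varies Borel-measurably in $x$) provides a Borel full $\mu_i$-measure subset of $\cM_i$ meeting each $\xi^c_i(x)$ in at most $N$ points. The main obstacle is the rigorous merging-of-atoms argument in Step~3: one must carefully track how $f$ acts on the disintegrated conditionals across different Markov elements (the plaque $\xi^c_i(x)$ may cut through the center leaf without being $f$-invariant), so that the two nearby atoms are genuinely atoms of the \emph{same} conditional at iterate $f^n$. This bookkeeping is handled by the Markov property~\eqref{eq.Markov1}, which guarantees that a center plaque is mapped inside a single plaque of $\cM_j$, so that atomic masses are preserved up to the constant-Jacobian factor from Remark~\ref{r.lowboundary}.
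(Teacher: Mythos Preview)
Your overall strategy is the right one and matches the paper's in spirit: negative center exponents plus compact center leaves should force the center conditionals to be atomic with a uniformly bounded number of atoms. The paper's proof establishes a pointwise Birkhoff-type hyperbolicity estimate along the orbit and then invokes \cite[Proposition~3.7]{ViY13} as a black box for exactly this conclusion; your sketch is essentially an attempt to reprove that cited result directly.

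However, your Step~3 contains a genuine gap. First, you assume the conditionals $\mu^{c,i}_x$ already have atoms (``two distinct points $y\neq z$ of positive conditional mass''), but atomicity is part of what must be proved: if $\mu^{c,i}_x$ has a non-atomic component, your pigeonhole step produces nothing to work with. Second, and more seriously, the ``merging'' contradiction is not a contradiction at all. Even if $d_c(f^n(y),f^n(z))\to 0$, the points $f^n(y)$ and $f^n(z)$ remain distinct for every finite $n$, their masses are preserved (up to the constant Jacobian), and nothing inconsistent occurs---two atoms are allowed to be arbitrarily close. The Markov bookkeeping you invoke at the end only tracks which plaque the iterates land in; it does not manufacture a contradiction from two atoms approaching each other. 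A correct argument has to do more work: one route is to show that the local mass function $x\mapsto \mu^{c,i}_x(B^c_\delta(x))$ on the Pesin block is essentially invariant along the orbit and hence a.e.\ constant by ergodicity, and then use the uniform contraction plus recurrence to the block to force this constant to equal $\mu^{c,i}_x(\{x\})$; this simultaneously yields atomicity and, combined with your covering bound, the cardinality estimate $N$.
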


\begin{proof}
Since $f$ is assumed to have $c$-mostly contracting center, the center-stable exponents of $\mu$ are all negative
(Proposition~\ref{p.contracting}), and so, there exist $m \ge 1$ and $c > 0$ such that
$$
\frac{1}{m}\int \log \|Df^{-m}\mid_{E^{cs}}\|^{-1} \, d\mu \ge c.
$$
This inequality remains true for some ergodic component $\mu_0$ of $\mu$ for the iterate $f^{m}$.
Then, by the Birkhoff ergodic theorem,
\begin{equation}\label{eq.m1}
\lim_n \frac{1}{n} \sum_{i=0}^{n-1} \log \|Df^{-m} \mid_{E^{cs}(f^{-i m}y)}\|^{-1} \ge c
\end{equation}
for $\mu_0$-almost every $y$. Let $K = \max\|Df^{\pm 1} \mid_{E^{cs}}\|$ and fix $l > 1$ large enough 
that $(l-1) c \ge 2 m \log K$. Since $\mu$ is assumed to be ergodic for $f$, it satisfies
$$
\mu = \frac{1}{m} \sum_{j=0}^{m-1} f^j_*\mu_0.
$$
So, for $\mu$-almost every $x$ there exists $j=0, \dots, m-1$ such that $y=f^j(x)$ satisfies \eqref{eq.m1},
and so
$$
\begin{aligned}
\lim_n \frac{1}{n} \sum_{i=0}^{n-1} & \log\|Df^{-lm}\mid_{E^{cs}(f^{-ilm}(x))}\|^{-1} \\
& \ge \lim_n \frac{1}{n} \sum_{i=0}^{n-1} \log \left(K^{-j} \|Df^{-lm} \mid_{E^{cs}(f^{-ilm}(y))}\|^{-1} K^{-j}\right)\\
& \ge - 2j \log K + l \lim_n \frac{1}{nl} \sum_{i=0}^{nl-1} \log \|Df^{-m} \mid_{E^{cs}(f^{-im}(y))}\|^{-1}\\
& \ge - 2 m \log K + l c \ge c.
\end{aligned}
$$
(in the second inequality we used the sub-multiplicativity of the norm).
By Proposition~3.7 in \cite{ViY13} it follows that there exists $N \ge 1$, depending only on $c$ and the maximum
volume of the center leaves, and there exists a full $\mu$-measure subset whose intersection with each center plaque
contains no more than $N$ points.
\end{proof}

Let $\{\mu^c_{i,x}: x \in \cM_i\}$ be a disintegration of $\mu \mid \cM_i$ along the center plaques $\xi^c_i(x)$.
It follows immediately from Lemma~\ref{l.finite1} that for $\mu_i$-almost every $x$ the conditional probability
$\mu^c_{i,x}$ is supported on no more than $N$ points.

Now let $\hat\mu_i$ be the quotient of the measure $\mu \mid \cM_i$ with respect to family of unstable
plaques $\xi^u_i(x)$. This may be viewed as a measure on the center-stable plaque $\xi_i^{cs}(x_i)$,
namely the projection of $\mu \mid \cM_i$ to $\xi_i^{cs}(x_i)$ along the unstable plaques.
Then, let $\{\hat\mu^c_{i,y}: y \in \xi^{cs}_i(x_i)\}$ be a disintegration of $\hat\mu_i$ along the
center plaques $\xi^c_i(y)$. Similarly to the previous paragraph, we have:  

\begin{lemma}\label{l.finite2}
For $\hat\mu_i$-almost every $y \in \xi_i^{cs}(x_i)$, the conditional probability $\hat\mu^c_{i,y}$ is supported on
no more than $N$ points.
\end{lemma}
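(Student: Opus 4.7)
The plan is to reduce Lemma~\ref{l.finite2} to Lemma~\ref{l.finite1} by identifying the conditional probability $\hat\mu^c_{i,x}$ on a center plaque $L\subset\xi^{cs}_i(x_i)$ with a conditional $\mu^c_{i,y}$ of $\mu_i$ along a center plaque of $\cM_i$, via a product decomposition of $\hat L=p_i^{-1}(L)$, where $p_i:\cM_i\to\xi^{cs}_i(x_i)$ denotes the strong-unstable projection used in the proof of Theorem~\ref{main.A}. The key geometric input is the fiber bundle hypothesis of Theorem~\ref{main.C}: it forces $\cM_i$ to be a union of full compact center leaves $\xi^c_i(y)=\pi^{-1}(\pi(y))$, makes $\pi$ collapse each center plaque to a single point, and ensures that a strong-unstable plaque and a center plaque inside $\hat L$ intersect in at most one point, because their $\pi$-images meet at a unique point of $W^u_i(c)$ (with $c=\pi(L)$) and $\pi$ is injective on each strong-unstable plaque.

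First I pull back the partition. Setting $\hat L=p_i^{-1}(L)$ for each center plaque $L\subset\xi^{cs}_i(x_i)$, Rokhlin disintegration gives conditional probabilities $\mu^{\hat\beta}_{i,\hat L}$ for $\mu_i$, and naturality of pushforward for disintegrations yields $\hat\mu^c_{i,L}=(p_i)_*\mu^{\hat\beta}_{i,\hat L}$. Next I equip $\hat L$ with a product structure: since $\hat L=\pi^{-1}(W^u_i(c))\cap\cM_i$, after fixing any $y_0\in L$ the map $\Phi:L\times\xi^u_i(y_0)\to\hat L$ defined by $\Phi(w_L,w_u)=\xi^u_i(w_L)\cap\xi^c_i(w_u)$ is a bijection whose two slicings are the partitions of $\hat L$ into strong-unstable plaques (parameterized by $L$) and into center plaques (parameterized by $\xi^u_i(y_0)$).

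The third and key step is to show that $\mu^{\hat\beta}_{i,\hat L}$ is literally a product in these coordinates. Proposition~\ref{p.gibbs}(4) says $\mu$ is a $c$-Gibbs $u$-state, so its disintegration along strong-unstable plaques provides the reference measures $\nu^u_{i,w_L}$ as conditionals, together with some probability $\sigma_L$ on $L$ as quotient. The crucial observation is that the intra-$\hat L$ center holonomy $\xi^u_i(w_L)\to\xi^u_i(y_0)$ sending $y$ to the unique intersection $\xi^u_i(y_0)\cap\xi^c_i(y)$ is a lift through $\pi$ of the identity of $W^u_i(c)$; since the reference measures are characterized by $\pi_*\nu^u_{i,w}=\Leb^u_{i,c}$, this holonomy carries $\nu^u_{i,w_L}$ to $\nu^u_{i,y_0}$. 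Consequently $\mu^{\hat\beta}_{i,\hat L}=\sigma_L\otimes\nu^u_{i,y_0}$ in the coordinates of $\Phi$.

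Finally I read off the conclusion from the two slicings of this product. Pushing forward by $p_i$, each $\nu^u_{i,w_L}$ collapses to $\delta_{w_L}\in L$, so $\hat\mu^c_{i,L}=\sigma_L$. Disintegrating the product instead along center plaques (constant $w_u$), the conditional of $\mu_i$ on the center plaque $\{\Phi(w_L,w_u):w_L\in L\}$ equals $\sigma_L$ under the identification $w_L\leftrightarrow\Phi(w_L,w_u)$, and Lemma~\ref{l.finite1} then bounds the support of $\sigma_L$ by $N$ on a full $\mu_i$-measure set of such plaques, hence on a full $\hat\mu_i$-measure set of center plaques $L$. The principal obstacle is Step~3: extracting a genuine product from the $c$-Gibbs property depends entirely on the center holonomy projecting through $\pi$ to the identity, which is available precisely because $\pi$ is a fiber bundle with center fibers; without that assumption the reference measures $\nu^u_{i,w_L}$ could twist nontrivially with $w_L$ and the product decomposition would fail.
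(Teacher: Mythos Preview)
Your proof is correct and follows essentially the same route as the paper: both identify $\hat\mu^c_{i,L}$ with $\mu^c_{i,y}$ by showing that the conditional of $\mu_i$ on a center-unstable set (your $\hat L$, the paper's $\xi^{cu}_i$) is a product whose $u$-factor is the reference measure, and then invoke Lemma~\ref{l.finite1}. The only cosmetic difference is in how the product is obtained: the paper uses the global center-stable holonomy invariance \eqref{eq_cs-invariant} to write $\mu_i=\mu^u_i\times\hat\mu_i$ on all of $\cM_i$ and then restricts, whereas you work on each slice $\hat L$ and argue directly that your ``center holonomy'' lifts the identity of $W^u_i(c)$ through $\pi$. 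These coincide: when $w_L,y_0$ lie on the same center leaf, your center holonomy \emph{is} the map $H^{cs}_{w_L,y_0}$, so you are rederiving a special case of \eqref{eq_cs-invariant} from first principles.

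One small overstatement: your closing remark that without the fiber-bundle hypothesis ``the reference measures $\nu^u_{i,w_L}$ could twist nontrivially with $w_L$'' is not quite right. The invariance you need is already \eqref{eq_cs-invariant}, which holds for any diffeomorphism factoring over Anosov, independent of the bundle assumption. What the hypotheses of Theorem~\ref{main.C} actually buy you here is the existence of the center foliation and hence of the partition $\{\hat L\}$ into center-unstable plaques, not the holonomy invariance itself.
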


\begin{proof}
By the local product structure, we may find coordinates $(x_u,x_{cs})$ on $\cM_i$ for which the unstable plaques 
$\xi^u_i$ and the center-stable plaques $\xi^{cs}_i$ are given by relations $x_{cs}=\constant$ and
$x^u=\constant$, respectively.
Since $\mu$ is a measure of maximal $u$-entropy and, thus, a $c$-Gibbs $u$-state, the restriction
$\mu \mid \cM_i$ disintegrates into the reference measures $\{\nu^u_{i,x}: x\in \cM_i\}$ along the
unstable plaques $\xi^u_i(x)$. Then
\begin{equation}\label{eq.disinto1}
\mu \mid \cM_i = \int_{\xi^{cs}_i(x_i)} \nu^u_{i,x_{cs}} \, d\hat\mu_i(x_{cs}).
\end{equation}
By \eqref{eq_cs-invariant}, the reference measures are invariant under center-stable holonomy.
In our coordinates, that means that $\nu^u_{i,x_{cs}}$ is constant, that is, independent of $x_{cs}$.
We write this constant as $\nu^u_{i}$, and view it as a measure on the unstable plaque $\xi^u_i(x_i)$.
Thus \eqref{eq.disinto1} becomes
\begin{equation}\label{eq.disinto2}
\mu \mid \cM_i = \nu^u_{i} \times \hat\mu_i
\end{equation}
Let $\hat\mu_i^s$ be the quotient measure of $\hat\mu_i$ with respect to the family of center plaques
$\xi^c_i(x_{cs})$, which we may view as a measure on the stable plaque $\xi^s_i(x_i)$. Then
\begin{equation}\label{eq.disinto3}
\hat\mu_i = \int_{\xi^s_i(x_i)} \hat\mu^c_{i,x_s} \, d\hat\mu^s_i(x_s),
\end{equation}
and so
\begin{equation}\label{eq.distinto4}
\mu \mid \cM_i 
= \nu^u_i \times \int_{\xi_i^s(x_i)}   \hat\mu^c_{i,x_s} \, d\hat\mu^s_i(x_s)
= \int_{\xi_i^u(x_i) \times \xi_i^s(x_i)}   \hat\mu^c_{i,x_s} \, d\hat\mu^s_i(x_s) \, d\nu_i^u(x_u).
\end{equation}
This shows that the quotient measure of $\mu \mid \cM_i$ with respect to the family of center plaques
is (in our coordinates) the measure $\nu^u_i \times \hat\mu^{s}_i$ on  $\xi^u_i(x_i) \times \xi^s_i(x_i)$.
Moreover, by the essential uniqueness of the disintegration, for $\mu$-almost every $x\in\cM_i$
\begin{equation}\label{eq.distinto5}
\mu^c_{i,x} = \hat\mu^c_{x_s}
\end{equation}
where $x_s$ is the point in the intersection of $\xi^{cu}_i(x)$ with $\xi^s(x_i)$.
Thus the claim follows directly from Lemma~\ref{l.finite1}.
\end{proof}

A foliation is said to be \emph{uniquely ergodic} if it admits a unique invariant transverse measure,
up to constant factor. The following lemma is a special case of Theorem~2.1 in Bowen, Marcus~\cite{BoM77}
which holds for the unstable foliation of any mixing basic set of an Axiom A diffeomorphism
(it is also not difficult to prove the lemma directly):

\begin{lemma}\label{l.uniquely_ergodic}
The unstable foliation of every Anosov linear automorphism $A$ is uniquely ergodic.
\end{lemma}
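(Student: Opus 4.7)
The plan is to reduce unique ergodicity of $\cW^u$ to unique ergodicity of the translation action of the subgroup $E^u \subset \RR^d$ on $\TT^d$, and prove the latter by classical Fourier analysis. Since the leaves of $\cW^u$ are precisely the cosets $x + E^u$, the foliation is the orbit foliation of this $E^u$-action, so invariant transverse measures $\tau$ for $\cW^u$ correspond (up to a multiplicative constant) to locally finite $E^u$-invariant Borel measures $m$ on $\TT^d$. The correspondence is built via the local product formula $m_{\mathrm{loc}} = \Leb^u \times \hat m_S$ on each foliation box, where $\Leb^u$ is Lebesgue along leaves: holonomy invariance of $\tau$ makes the local pieces compatible, translation invariance of $\Leb^u$ along each leaf makes the global $m$ invariant under all translations by $v \in E^u$, and the reverse direction is a routine disintegration.

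Next I would show that $\pi(E^u)$ is dense in $\TT^d$. If not, its closure is a proper $A$-invariant subtorus $H_0 \subsetneq \TT^d$ whose Lie algebra $V$ at $0$ is a rational subspace of $\RR^d$ containing $E^u$. Since $V$ absorbs every expanding direction of $A$, the induced automorphism $\bar A$ on $\TT^d/H_0$ has no expanding eigenvalues; hyperbolicity of $A$ rules out eigenvalues on the unit circle, so every eigenvalue of $\bar A$ has modulus strictly less than $1$. But $\bar A$ is an integer matrix with $|\det \bar A| = 1$, a contradiction. With density in hand, a character argument concludes: any $E^u$-invariant probability measure $m$ satisfies $\chi_n(v)\hat m(n) = \hat m(n)$ for every $n \in \ZZ^d$ and $v \in E^u$, and density of $\pi(E^u)$ forces $\chi_n|_{E^u}$ to be nontrivial whenever $n \neq 0$ (else $E^u$ would lie in the proper rational hyperplane $n^\perp$, contradicting density), so $\hat m(n) = 0$ for all $n \neq 0$, i.e., $m$ is Lebesgue. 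Combined with the correspondence above, this yields uniqueness of the invariant transverse measure up to a scalar.

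The main obstacle I expect is the correspondence step: transverse measures are defined on a possibly large family of small cross-sections with only holonomy compatibility between them, and turning that data into a genuinely globally defined $E^u$-invariant measure on $\TT^d$ requires careful patching on overlapping foliation boxes, crucially using the holonomy invariance built into the definition of $\tau$. The density argument and the Fourier computation are classical and short once the correspondence is in place.
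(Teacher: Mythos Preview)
Your proposal is correct and follows essentially the same strategy as the paper: build from the transverse measure $\tau$ a measure on $\TT^d$ via the local product with Lebesgue along leaves, observe that it is invariant under translations by vectors in $E^u$, and then use density of $E^u$ modulo $\ZZ^d$ to conclude that this measure must be Lebesgue.

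The differences are in execution rather than idea. The paper simply asserts density of the unstable leaves and then argues by continuity: a measure invariant under a dense subgroup of translations is invariant under all translations, hence equals Haar. You instead supply a self-contained proof of density (the subtorus/eigenvalue argument) and replace the approximation step by a character computation, killing the nonzero Fourier coefficients directly. Your route is slightly longer but more explicit, and the density argument you give is the standard one the paper leaves to the reader. The patching concern you flag for the correspondence step is real but is exactly what the paper handles in its first paragraph, using holonomy invariance to show independence of the cross-section and agreement on overlaps; your sketch of this is adequate.
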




We are ready to complete the proof of Theorem~\ref{main.D}. It $\tau_\mu$ is ergodic, there is nothing to do.
Otherwise, by Lemma~\ref{l.ergodictransverse2} we may split is as $\tau_\mu=\tau_1+\tau_2$ where $\tau_1$ and
$\tau_2$ mutually singular. The projections of both under $\pi$ are invariant transverse measures for the
unstable foliation of the linear automorphism $A$. Thus, by Lemma~\ref{l.uniquely_ergodic}, they are multiples
of each other. For $j=1, 2$ and any $i$, let
\begin{equation}
\tau_j \mid \xi^{cs}_i(x_i) = \int \tau^c_{j,z} \, d\hat\tau_j(z)     
\end{equation}
be the disintegration of $\tau_j \mid \xi^{cs}_i(x_i)$ along center plaques $\xi_i^c(z)$.
By the previous remarks, $\hat\tau_1$ and $\hat\tau_2$ are multiples to each other.
So, the fact that $\tau_1 \mid \xi^{cs}_i(x_i)$ and $\tau_2 \mid \xi^{cs}_i(x_i)$ are mutually singular
implies that $\tau^c_{1,z}$ and $\tau^c_{2,z}$ are mutually singular for
$\hat\tau_j$-almost every $z$.
The sum $\tau^c_{1,z} + \tau^c_{2,z}$ is the conditional probability $\tau_{\mu,z}^c$ of the measure
$\tau_\mu \mid \xi^{cs}_i(x_i)$ which, by Theorem~\ref{main.A} is a multiple of $\hat\mu^c_i$.
By Lemma~\ref{l.finite2}, the latter is supported on no more than $N$ points. 
Thus the supports of the measures $\tau^c_{1,z}$ and $\tau^c_{2,z}$ must be disjoint, and this decomposition
procedure cannot be repeated more than $N$ times. This completes the proof.

\section{Proof of Theorem~\ref{main.B}}\label{s.proofB}

Let $x_1$ and $x_2$ be any two points in $\supp\mu$.
Up to renumbering the elements of the Markov partition if necessary, we may suppose that $x_1 \in \cM_1$ and $x_2 \in \cM_2$.
For $i=1, 2$, denote $Y_i=\xi^u_i(x_i) \times [0,1]$. Equip each $Y_i$ with the probability measure $m_i=\nu^u_{i,x_i} \times dt$.

The main technical step in the proof of Theorem~\ref{main.B} is the following lemma,
whose proof we postpone to Section~\ref{s.coupling2}.
This is also the one step where we use the assumption that the support of $\mu$ is connected.

\begin{lemma}[Coupling Lemma]\label{l.coupling}
There are a map $\tau: Y_1\to Y_2$ with $\tau_*m_1 = m_2$, a function $R: Y_1 \to \NN$,
and constants $C_1, C_2>0$ and $\rho_1, \rho_2 \in(0,1)$
such that
\begin{enumerate}
\item If $\tau(x,t)=(y,s)$ then $f^n(x)$ and $f^n(y)$ belong to the same Markov component $\cM_i$ for some $i$,
and $d(f^n(x),f^n(y))\leq C_1\rho_1^{n-R}$
for $n\geq R(x,t)$.
\item $m_1(R>n)\leq C_2\rho_2^n$ for every $n\ge 1$.
\end{enumerate}
\end{lemma}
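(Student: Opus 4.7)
My plan is to build the coupling $\tau$ by iterating a basic step in which a definite fraction of mass on each side is paired up with exponentially contracting orbits, and then handling the uncoupled residue recursively. Three ingredients come together: (i) uniform contraction along center–stable directions, provided by $c$-mostly contracting center; (ii) $u$-minimality of $\supp\mu$, which follows from ergodicity, $u$-saturation (Proposition~\ref{p.gibbs}(2)), and connectedness of the support; (iii) the Markov structure of $\cM$, which lets us convert ``intersection of iterates with a common cross-section'' into ``sharing a common center-stable plaque inside some $\cM_i$.'' The auxiliary coordinate $t\in[0,1]$ serves to randomize the coupling, since at each step we wish to couple only a fraction of the mass.

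First I would establish a quantitative contraction estimate: there exist $C>0$, $\sigma\in(0,1)$, $\epsilon>0$ and a set $G\subset\supp\mu$ such that $\nu^u_{i,x}(G\cap\xi^u_i(x))$ is uniformly bounded below on plaques inside $\supp\mu$, and for every $z\in G$ and every $w$ in the local center-stable plaque of $z$ within distance $\epsilon$, one has $d(f^n(z),f^n(w))\le C\sigma^n d(z,w)$ for all $n\ge 0$. This follows from Proposition~\ref{p.contracting} combined with a Pliss-type argument applied to the reference measures, using that ergodic $c$-Gibbs $u$-states have negative center exponents and that $\mu^u_{i,x}=\nu^u_{i,x}$ on $\mu$-typical plaques.

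Next, using the Markov property \eqref{eq.Markov1} together with uniform expansion along $\cF^{uu}$ and the invariance \eqref{eq_cs-invariant} plus constant Jacobian \eqref{eq_constant_Jacobian} of the reference measures, I show the following basic coupling step: there exist $N_0\ge 1$ and $\alpha\in(0,1)$ such that for any pair of strong-unstable plaques $\xi^u_1(x_1)$, $\xi^u_2(x_2)$ with $x_1,x_2\in\supp\mu$, one can find $N\le N_0$, subsets $A_i\subset\xi^u_i(x_i)$, and a measurable bijection $\phi:A_1\to A_2$ with $\phi_*(\nu^u_{1,x_1}\mid A_1)=\nu^u_{2,x_2}\mid A_2$, such that for every $x\in A_1$, $y=\phi(x)$ satisfies $f^N(x),f^N(y)\in\cM_{i(x)}$ for some $i(x)$, lie in the same local center-stable plaque, and $d(f^N(x),f^N(y))<\epsilon$ with $f^N(x)\in G$. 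Here $\alpha\le\nu^u_{i,x_i}(A_i)$, and $\alpha$ is uniform because $u$-minimality of $\supp\mu$ forces iterates of any strong-unstable plaque to contain full Markov $u$-plaques with definite $\nu^u$-mass inside every $\cM_i$ meeting $\supp\mu$, while $G$ occupies a uniformly positive $\nu^u$-fraction of each such plaque.

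Finally I assemble $\tau$ by induction. On the slab $A_1\times[0,\alpha]$ inside $Y_1$, set $R\equiv N$ and define $\tau(x,t)=(\phi(x),t/\alpha)$ so that the push-forward under $\tau$ coincides with $\nu^u_{2,x_2}\mid A_2 \times[0,1]$ scaled by $\alpha$; by conclusion of the basic step and the contraction estimate, orbits stay in a common Markov element forever after step $N$ (one applies the Markov property \eqref{eq.Markov1} together with the exponentially small center-stable distance) and satisfy $d(f^n(x),f^n(y))\le C_1\rho_1^{n-R}$ with $\rho_1=\sigma$. The complement $Y_1\setminus(A_1\times[0,\alpha])$ carries mass $1-\alpha$; after projecting onto a suitable pair of residual plaque collections and reiterating the basic step on each piece, the uncoupled mass drops by a factor $(1-\alpha)$ every $N_0$ iterates, giving $m_1(R>n)\le(1-\alpha)^{\lfloor n/N_0\rfloor}$, hence the exponential bound with $\rho_2=(1-\alpha)^{1/N_0}$. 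The main obstacle is securing the uniform lower bound $\alpha$ independently of the residual pieces produced at each stage; this is resolved by observing that after one iteration of the basic step the surviving pieces are themselves full Markov $u$-plaques (by \eqref{eq.Markov1}) with reference measure given by \eqref{eq_constant_Jacobian}, so the bounded distortion and $u$-minimality supply the same $\alpha$ on every subsequent round.
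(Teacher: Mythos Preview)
Your overall strategy---a basic step that couples a definite fraction via center-stable holonomy after a bounded number of iterates, followed by recursion on the residue---matches the paper's, and the three ingredients you list are exactly the right ones. However, there is a real gap at the recursive step. You require every $x\in A_1$ to satisfy $f^N(x)\in G$, where $G$ is the set of points at which the local center-stable ball contracts for \emph{all} $n\ge 0$. Such a $G$ is not $\xi^u$-saturated at any finite iterate: its complement in a plaque $\xi^u_j(y)$ is a union over $n$ of ``first-failure-at-time-$n$'' sets, and the $n$th of these is a union of $f^{-n}$-preimages of full plaques only at level $n$, not at any fixed level. Hence $A_1=f^{-N}(\text{plaque}\cap G)$ is \emph{not} a union of $f^{-N}$-preimages of full plaques, and neither is its complement. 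Your assertion that ``the surviving pieces are themselves full Markov $u$-plaques (by \eqref{eq.Markov1})'' therefore fails, and with it the uniform lower bound $\alpha$ on subsequent rounds: your basic step, as stated, applies only to pairs of full plaques, and there is no mechanism ensuring that the irregular residual sets meet $f^{-N}(G)$ in uniformly positive proportion at the next pass. (There is also some confusion in the $t$-coordinate bookkeeping---the slab $A_1\times[0,\alpha]$ has mass $\nu^u(A_1)\cdot\alpha$, not $\alpha$---but that is secondary.)

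The paper resolves this by abandoning the fixed waiting time $N_0$. After the initial matching at a uniform time $n_0$ (Lemma~\ref{l.y1y2}), the contraction condition \eqref{eq.USet} is phrased as a \emph{maximum} over the level-$n$ preimage plaque, so the ``bad'' set $U^n_1(y_1)$ is automatically $f^{-n}$-plaque-saturated. The residual is then stratified by the \emph{first} bad time $n>n_0$ (the sets $V^n(y_1)$ in \eqref{eq.Vny1}), and each stratum $P^n_j$ becomes a union of $f^{-n}$-preimages of full plaques (Lemma~\ref{l.same_measure}(3)), allowing the recursion to restart cleanly at the variable time $n$. The price is that $R$ is a sum $n_1+\dots+n_h$ of unbounded increments; the tail bound then requires a combinatorial Stirling-type estimate \eqref{eq.Stirling} on the number of such compositions, combined with the exponential bound \eqref{eq.KU1} on each stratum, rather than the single geometric factor $(1-\alpha)^{\lfloor n/N_0\rfloor}$ you propose.
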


Let $S$ be a cross-section contained in some $\cM_j$ with $\mu(\cM_j)>0$,
and $\hat\varphi:S \to \RR$ be any H\"older real function supported inside $S$.
In the remainder of this section, we take $\varphi:M \to \RR$ to be the extension of $\hat\varphi$ which is
constant along unstable plaques on $\cM_j$ and vanishes on $M \setminus \cM_j$.
Since  $\hat\varphi$ is taken to be supported inside $S$,
the extension $\varphi$ vanishes on the unstable boundary $\partial^u \cM_j = \pi^{-1}(\partial^u\cR_j)$. 
Notice also that $\varphi$ is H\"older restricted to $\cM_j$, since $\hat\varphi$ is H\"older and so are 
the strong-unstable holonomy maps.

\begin{corollary}\label{c.coupling1}
Given $\hat\varphi:S\to\RR$ there are $C>0$ and $0<\rho<1$ such that for any points $x,y\in \supp\mu$ 
$$
\left|\int \varphi \, d\left(f^n_* \nu^u_{i,x}\right) -\int \varphi \, d\left(f^n_* \nu^u_{k,y}\right)\right| \leq C\rho^n
$$
for every $n\ge 1$, where $\cM_i \ni x$ and $\cM_k\ni y$. 
\end{corollary}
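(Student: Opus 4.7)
The plan is to apply the Coupling Lemma (Lemma~\ref{l.coupling}) to the two points $x$ and $y$, which lie in $\supp\mu$ by assumption. Setting $Y_1 = \xi^u_i(x) \times [0,1]$ and $Y_2 = \xi^u_k(y) \times [0,1]$ with product measures $m_1$ and $m_2$, the lemma furnishes a measure-preserving map $\tau \colon Y_1 \to Y_2$ with $\tau_* m_1 = m_2$ and a return-time function $R \colon Y_1 \to \NN$. Since $\varphi \circ f^n$ is independent of the $[0,1]$-coordinate, using $\tau_* m_1 = m_2$ and writing $p_1 \colon Y_2 \to \xi^u_k(y)$ for the projection yields
$$
\int \varphi \, d(f^n_* \nu^u_{i,x}) - \int \varphi \, d(f^n_* \nu^u_{k,y})
= \int_{Y_1} \bigl[\varphi(f^n(x')) - \varphi(f^n(p_1 \circ \tau(x',t)))\bigr] \, dm_1(x',t).
$$

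The next step is to split the integration domain into $\{R \le n/2\}$ and $\{R > n/2\}$. On $\{R > n/2\}$ the integrand is crudely bounded by $2\|\varphi\|_\infty$, and part (2) of the Coupling Lemma yields $m_1(R > n/2) \le C_2 \rho_2^{n/2}$, contributing an $O(\rho_2^{n/2})$ term. On $\{R \le n/2\}$, part (1) of the Coupling Lemma guarantees that $f^n(x')$ and $f^n(p_1(\tau(x',t)))$ lie in a \emph{common} Markov element $\cM_l$ and are within distance $C_1 \rho_1^{n/2}$. If $l \neq j$ then $\varphi$ vanishes at both points and the integrand is zero; if $l = j$, both points belong to $\cM_j$ where $\varphi$ is H\"older by construction, so the integrand is bounded by a constant times $\rho_1^{\alpha n/2}$, with $\alpha$ the H\"older exponent of $\hat\varphi$. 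Adding the two contributions gives the claimed bound $C\rho^n$ with $\rho = \max(\rho_1^{\alpha/2}, \rho_2^{1/2}) < 1$.

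The delicate point, rather than a true obstacle, is that $\varphi$ is only H\"older in restriction to $\cM_j$ and has a genuine jump across the stable boundary $\partial^s \cM_j$. A naive pairing of two close points in $M$ would fail to yield a H\"older estimate, because the two points could straddle this discontinuity. The Coupling Lemma is tailored precisely to avoid this pathology: at every time $n \ge R$ the two coupled orbits sit in one and the same Markov element, so we only ever compare two values inside $\cM_j$ (where H\"older continuity is available) or two values on $M \setminus \cM_j$ (which are both zero). This is what allows the splitting above to reduce to the two clean cases and leads, after summation, to exponential decay uniform in $x, y \in \supp\mu$.
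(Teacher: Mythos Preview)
Your proof is correct and follows essentially the same approach as the paper: apply the Coupling Lemma, write the difference of integrals as a single integral over $Y_1$ via $\tau_*m_1=m_2$, split according to $\{R\le n/2\}$ versus $\{R>n/2\}$, and combine the H\"older estimate on the first piece with the exponential tail bound on the second to obtain $\rho=\max\{\rho_1^{\alpha/2},\rho_2^{1/2}\}$. Your explicit discussion of why the case split $l=j$ versus $l\neq j$ avoids the discontinuity of $\varphi$ across $\partial^s\cM_j$ is a point the paper handles only implicitly.
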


\begin{proof}
It is no restriction to suppose that $i=1$ and $k=2$.
Define $\tilde\varphi:M \times [0,1] \to \RR$ by  $\tilde\varphi(x,t) = \varphi(x)$. Then
\begin{equation}\label{eq.integ1}
\int \varphi \, d\left(f^n_* \nu^u_{1,x}\right)
= \int (\varphi \circ f^n) \,  d\nu^u_{1,x}
= \int \tilde\varphi (f^n(z),t) \, d m_1(z,t).
\end{equation}
By Lemma~\ref{l.coupling}, the map $\tau: \xi^u_{1,x}\times [0,1] \to \xi^u_{2,y}\times [0,1]$,
$(z,t) \mapsto (w,s) = \tau((z,t))$ sends the measure $m_1$ to $m_2$. Thus,
\begin{equation}\label{eq.integ2}
\int \varphi \, d\left(f^n_* \nu^u_{2,x}\right)
= \int \tilde\varphi (f^n(w),s) \, d m_2(w,s)
= \int \tilde\varphi (f^n(w),s) \, d m_1(z,t).
\end{equation}
We are going to break both \eqref{eq.integ1} and \eqref{eq.integ2} as a sum of integrals over the 
two domains $\{R(z,t) \le n/2\}$ or $\{R(z,t) > n/2\}$.
Let $C_3$ and $\alpha_3$ be H\"older constants for $\varphi$ restricted to $\cM_j$.
By part (1) of Lemma~\ref{l.coupling},
$$
\begin{aligned}
& \left|\int_{R(z,t) \le n/2} \Big[\tilde\varphi (f^n(z),t) - \tilde\varphi (f^n(w),s) \Big] \, d m_1(z,t)\right| \\
& \qquad \qquad = \left|\int_{R(z,t) < n/2} \Big[\varphi (f^n(z)) - \varphi (f^n(w)) \Big] \, d m_1(z,t)\right|\\
& \qquad \qquad \le \left|\int_{R(z,t) < n/2} C_3 \left(C_1 \rho_1^{n-R(z,t)}\right)^{\alpha_3} \, d m_1(z,t)\right|
\le C_3 C_1^{\alpha_3} \rho_1^{\alpha_3 n/2}.
\end{aligned}
$$
By part (2) of Lemma~\ref{l.coupling}, the integrals over $\{R(z,t) > n/2\}$ are bounded above by $C_2\rho_2^{n/2}\sup|\varphi|$.
The claim is a direct consequence of these two estimates, with 
$C = C_3C_1^{\alpha_3} + C_2\sup|\varphi|$ and $\rho = \max\{\rho_1^{\alpha_3/2},\rho_2^{1/2}\}$.
\end{proof}

\begin{lemma}\label{l.coupling2}
There is $n_0>0$ such that for any $n \ge n_0$, any $x\in \cM_i\cap \supp \mu$, and any $i$
$$
\int \varphi \, d\left(f^n_* \nu^u_{i,x}\right) = \left(f^n_*\nu^u_{i,x}\right)(\cM_j) \frac{1}{\# \big(f^n(\xi^u_{i}(x)) \cap S\big)}\sum_{q \in f^n(\xi^u_{i}(x))\cap S} \hat\varphi(q).
$$
\end{lemma}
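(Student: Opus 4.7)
The plan is to exploit the Markov structure of $\cM$ together with the constant-Jacobian property of the reference measures $\nu^u_{i,x}$. I first claim that for every $n\ge 1$, each connected component of $f^n(\xi^u_i(x))\cap\cM_k$ is a full strong-unstable plaque $\xi^u_k(y)$. Pushing down to the torus via $\pi$, the boundary of $W^u_i(\pi(x))$ inside its unstable leaf lies in $\partial^s\cR_i$, and since $\partial^s\cR$ is forward invariant under $A$ (by the Markov property), the boundary of $A^n(W^u_i(\pi(x)))$ in the ambient unstable leaf is contained in $\partial^s\cR$. Consequently, each connected component of $A^n(W^u_i(\pi(x)))\cap\cR_k$ has its unstable-leaf boundary entirely inside $\partial^s\cR_k$, which forces the component to be a full plaque $W^u_k(y)$ rather than a proper subset of one. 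Lifting by $\pi$, which restricts to a homeomorphism from every strong-unstable plaque of $\cM$ onto the corresponding unstable plaque of $\cR$, yields the claim. I then choose $n_0$ large enough that in addition $f^n(\xi^u_i(x))\cap\cM_j\neq\emptyset$ for every $x\in\supp\mu$, every $i$, and every $n\ge n_0$; this is automatic from the $u$-saturation of $\supp\mu$, the uniform expansion along $E^{uu}$, and the assumption $\mu(\cM_j)>0$.

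Since $S$ meets every strong-unstable plaque of $\cM_j$ at exactly one point, the full plaques $\xi^u_j(q)\subseteq f^n(\xi^u_i(x))\cap\cM_j$ are in bijection with the points of $f^n(\xi^u_i(x))\cap S$, so $N_n:=\#\bigl(f^n(\xi^u_i(x))\cap S\bigr)$ counts these plaques. Because $\varphi\equiv 0$ on $M\setminus\cM_j$ and $\varphi$ restricted to each $\xi^u_j(q)$ is the constant $\hat\varphi(q)$, the integral splits as
$$
\int\varphi\,d(f^n_*\nu^u_{i,x})=\sum_{q\in f^n(\xi^u_i(x))\cap S}\hat\varphi(q)\,(f^n_*\nu^u_{i,x})\bigl(\xi^u_j(q)\bigr).
$$
The core step is to show that the weight $(f^n_*\nu^u_{i,x})(\xi^u_j(q))=\nu^u_{i,x}(f^{-n}(\xi^u_j(q)))$ is a constant $K_n$ independent of $q$. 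Using $\pi_*\nu^u_{i,x}=\Leb^u_{i,\pi(x)}$ together with $\pi\circ f=A\circ\pi$, this weight equals the normalized Lebesgue measure of $A^{-n}(W^u_j(\pi(q)))$ inside $W^u_i(\pi(x))$. Since $A$ has constant unstable Jacobian $\det(A\mid E^u)$ and every unstable plaque of $\cR_j$ has common volume $c_j^{-1}$, this evaluates to $K_n=c_i/(c_j\det(A^n\mid E^u))$, independently of $q$. Summing over $q$ gives $(f^n_*\nu^u_{i,x})(\cM_j)=N_n K_n$, and substituting back yields the asserted identity.

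The delicate point is the geometric claim of the first paragraph: that boundary effects, specifically an endpoint of $f^n(\xi^u_i(x))$ landing in the interior of a Markov element, cannot produce partial plaques. The argument sketched above rules this out using only the forward invariance of $\partial^s\cR$, and Remark~\ref{r.zeroboundary} ensures that any residual boundary ambiguities lie in a set of zero $\nu^u_{i,x}$-measure and therefore do not affect the integral on either side of the identity.
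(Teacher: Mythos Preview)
Your proof is correct and follows essentially the same approach as the paper's: decompose $f^n(\xi^u_i(x))\cap\cM_j$ into full strong-unstable plaques via the Markov property, split the integral accordingly, and use the semi-conjugacy to the linear map $A$ to show that all plaques receive equal $f^n_*\nu^u_{i,x}$-mass. The only minor point is your justification of $n_0$: $u$-saturation, expansion, and $\mu(\cM_j)>0$ alone do not force every iterated plaque to enter $\cM_j$; the paper instead invokes the semi-conjugacy to $A$ (whose topological mixing gives the uniform $n_0$ directly), and in your setting you would need the $u$-minimality of $\supp\mu$ rather than mere $u$-saturation.
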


\begin{proof}
By the Markov property, the intersection of $f^n(\xi^u_{i}(x))$ with $\cM_j$ consists of the unstable plaques
$\xi^u_{j}(q)$, $q \in f^n(\xi^u_{i}(x)) \cap S$. 
The assumption that $f$ is semi-conjugate to a linear Anosov map $A$ ensures that there exists $n_0\ge 1$
independent of $x$ such that this intersection is non-empty for every $n \ge n_0$.
Since $\varphi$ is constant on unstable plaques inside $\cM_j$,
$$
\int \varphi \, d\left(f^n_* \nu^u_{i,x}\right)
= \sum_{q\in f^n(\xi^u_{i}(x)) \cap S} \nu^u_{i,x}(f^{-n}(\xi^u_{j}(q))) \hat\varphi(q).
$$
By the definition of the reference measures, each term 
$\nu^u_{i,x}(f^{-n}(\xi^u_{j}(q))$ coincides with the Lebesgue measure of the image of 
$f^{-n}(\xi^u_{j}(q))$ under the semi-conjugacy, which is the $A^{-n}$-image of an unstable plaque $\cR_j$.
Since all these unstable plaques have the same Lebesgue measure, and the Jacobian of $A^{-n}$ along the
unstable direction is constant everywhere, we find that $\nu^u_{i,x}(f^{-n}(\xi^u_{j}(q)))$ is independent
of $q$. Therefore,
$$
\begin{aligned}
\nu^u_{i,x}(f^{-n}(\xi^u_{j}(q))
& = \frac{1}{\# \big(f^n(\xi^u_{i}(x)) \cap S\big)} \sum_{w \in f^n(\xi^u_{i}(x))\cap S} \nu^u_{i,x}
\left(f^{-n}(\xi^u_{j}(w)) \right) \\
& = \frac{1}{\# \big(f^n(\xi^u_{i}(x)) \cap S)} \left(f^n_*\nu^u_{i,x}\right)(\cM_j)
\end{aligned}
$$
The claim follows directly from these two identities.
\end{proof}

We are ready to complete the proof of Theorem~\ref{main.B}.
By assumption, $\mu$ is a measure of maximal $u$-entropy, and so it is a $c$-Gibbs $u$-state.
This means that its conditional measures along unstable plaques $\xi^u_i(x)$ are the reference measures $\nu^u_{i,x}$.
Thus, recalling also that $\mu$ is $f$-invariant, Corollary~\ref{c.coupling1} implies that 
$$
\left|\int \varphi \, d\left(f^n_* \nu^u_{i,x}\right) -\int \varphi \, d\mu\right| \leq C\rho^n
$$
for every $n\ge 1$. Then, using Lemma~\ref{l.coupling2}, 
\begin{equation}\label{eq.converge1}
\left|\left(f^n_*\nu^u_{i,x}\right)(\cM_j) \frac{1}{\# \big(f^n(\xi^u_{i}(x)) \cap S\big)}\sum_{q \in f^n(\xi^u_{i}(x))\cap S} \hat\varphi(q) -\int \varphi \, d\mu\right| \leq C\rho^n
\end{equation}
for every $n \ge n_0$. By the definition of the reference measures,
$$
\left(f^n_*\nu^u_{i,x}\right)(\cM_j)
= \left(A^n_*\Leb^u_{i,\pi(x)}\right)(\cR_j)
$$
since the projection $\pi:M\to\TT^d$ is a homeomorphism on each strong-unstable leaf.
It is a classical fact about linear Anosov maps that the right hand side converges to $\Leb(\cR_j)$ exponentially fast,
where $\Leb$ denotes the Haar measure on the torus.
This can be traced back to Sinai~\cite{Sin72} and Bowen~\cite{Bow75a},
see also Dolgopyat~\cite{Dol00} for a much more general statement.
Then, since $\mu$ is a $c$-Gibbs $u$-state, we can use \cite[Corollary~3.5]{UVYY1} to conclude that 
$$
\Leb(\cR_j)
= \mu(\cM_j) 
= \|\hat\mu_S\|.
$$
Keep also in mind that $\int \varphi \, d\mu = \int \hat\varphi \, d\hat\mu_S$. 
Thus, \eqref{eq.converge1} implies that
$$
\frac{1}{\# \big(f^n(\xi^u_i(x) \cap S\big)} \sum_{q \in f^n(\xi^u_i(x)) \cap S} \hat\varphi(q) 
\to \frac{1}{\|\hat\mu_S\|} \int \hat\varphi \, d\hat\mu_S
$$
exponentially fast as $n\to\infty$.

At this point we have reduced the proof of Theorem~\ref{main.B} to proving Lemma~\ref{l.coupling}.

\section{Large deviations}\label{s.large_deviations}

In this section we prove the following result which is need for the proof of the Coupling Lemma~\ref{l.coupling}:

\begin{theorem}\label{t.large_deviations}
Let $f:M\to M$ be a partially hyperbolic diffeomorphism that factors over Anosov and has $c$-mostly contracting center.
Let $\mu$ be a measure of maximal $u$-entropy whose  support $\Lambda$ is connected.
Then $(f,\mu)$ satisfies a large deviations principle for continuous observables.
\end{theorem}

One says that a system $(f,\mu)$ satisfies a \emph{large deviations principle} for continuous observables
if for every $\alpha>0$ and every continuous function $\varphi$ with
$\int_M \varphi \, d\mu = 0$ there exist positive constants $C_\alpha$ and $c_\alpha$
such that
\begin{equation}\label{eq.large_deviations_def}
\mu\left(\left\{x\in M: \left|\frac 1n \sum_{j=0}^{n-1} \varphi(f^j(x))\right| >\alpha\right\}\right))
\le C_\alpha e^{-c_\alpha n} \text{ for every }n\geq 1.
\end{equation}

Throughout the proof of Theorem~\ref{t.large_deviations}, which occupies the remainder of this section,
it is assumed that $\mu$ is a measure of maximal $u$-entropy and thus (compare part (4) of Proposition~\ref{p.gibbs})
a $c$-Gibbs $u$-state. The assumption that the support is connected is not essential, as explained in 
Remark~\ref{r.not_connected}.

\subsection{Probability measures with H\"older densities}\label{ss.Holder_densities}

Let us fix some element $\cM_i$ of the Markov partition (recall Section~\ref{s.Markov.partitions}).
It is convenient to consider a certain family of spaces
$E_i(R)$ consisting of probability measures on $\cM_i\cap\Lambda$ whose conditional probabilities
along the partition $\xi^u_i$ are absolutely continuous with respect to the reference measures,
with H\"older densities. We begin by giving the definition of this space, which is mostly
borrowed from Dolgopyat~\cite[Section~5]{Dol00}.

Let $\gamma\in(0,1)$ be fixed. For each $R \ge 0$ and $i=1, \dots, k$ denote by $C_i(R)$ the set
of all probability measures $\eta$ on $\cM_i\cap\Lambda$ of the form
\begin{equation}\label{eq.rho_dis}
\eta = e^{\rho} \nu^u_{i,x},
\end{equation}
where $x\in\cM_i\cap\Lambda$ and $\rho:\xi^u_i(x) \to \RR$ is an $(R,\gamma)$-H\"older function:
\begin{equation}\label{eq.rho_Holder}
|\rho(z_1)-\rho(z_2)| \leq R d(z_1,z_2)^{\gamma}
\text{ for any } z_1,z_2\in \xi^u_i(x).
\end{equation}
For instance, $C_i(0)$ consists precisely of the reference measures $\nu^u_{i,x}$, $x\in\cM_i\cap\Lambda$.
Every $C_i(R)$ is a weak$^*$-closed subset of the space of all probability measures on $\cM_i\cap\Lambda$,
because the reference measures $\nu^u_{i,x}$ vary continuously with $x$, and the space of
$(R,\gamma)$-H\"older functions is equicontinuous. Thus $C_i(R)$ is compact for the weak$^*$
topology, for every $R \ge 0$.

Next, let $\hat{E}_i(R)$ be the space of all probability measures on the compact space $C_i(R)$.
Note that $\hat{E}_i(R)$ is compact for the weak$^*$ topology. Consider the map
$$
\Pi: \hat{E}_i(R) \to \{\text{probability measures on } \cM_i\cap\Lambda\},
\quad
\Pi(\hat\zeta) = \int_{C_i(R)} \eta \, d\hat\zeta(\eta).
$$
In other words, $\Pi(\hat\zeta)$ is the probability measure on $\cM_i\cap\Lambda$ such that
$$
\int_{\cM_i\cap\Lambda} \psi \, d\Pi(\hat\zeta)
= \int_{C_i(R)} \left(\int_{\cM_i\cap\Lambda} \psi \, d\eta \right) \, d\hat\zeta(\eta)
$$
for any bounded measurable function $\psi:\cM_i\cap\Lambda\to\RR$.
Take $\psi$ to be continuous. Then it is clear that
\begin{equation}\label{eq.corresponding}
\hat\psi: C_i(R) \to \RR, \quad \hat\psi(\eta) = \int_{\cM_i\cap\Lambda} \psi \, d\eta
\end{equation}
is continuous. Let $(\hat\zeta_j)_j$ be any sequence converging to some $\hat\zeta$ in $\hat{E}_i(R)$.
Then
$$
\int_{\cM_i\cap\Lambda} \psi \, d\Pi(\hat\zeta_j) = \int_{C_i(R)} \hat\psi \, d\hat\zeta_j
\to
\int_{C_i(R)} \hat\psi \, d\hat\zeta = \int_{\cM_i\cap\Lambda} \psi \, d\Pi(\hat\zeta)
$$
as $j\to\infty$. Since $\psi$ is an arbitrary continuous function,
this proves that the map $\Pi$ is continuous.

Let $E_i(R) = \Pi(\hat{E}_i(R))$. It follows from the previous observations that $E_i(R)$ is a
weak$^*$-compact subset of the space of all probability measures on $\cM_i\cap\Lambda$.

\begin{lemma}\label{l.E_disintegration}
A probability measure $\zeta$ on $\cM_i\cap\Lambda$ is in $E_i(R)$ if and only if its conditional measures
with respect to the partition $\xi_i^u$ are elements of $C_i(R)$, that is, probability measures
of the form \eqref{eq.rho_dis}.
\end{lemma}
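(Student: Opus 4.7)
I would prove both directions by using the Rokhlin disintegration theorem and the fact that the class of log-$(R,\gamma)$-H\"older densities on a single unstable plaque is closed under convex combinations.

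For the \emph{if} direction, suppose the conditional probabilities $\zeta^u_{i,x}$ of $\zeta$ along the partition $\xi_i^u$ are in $C_i(R)$ for $\zeta$-almost every $x$. Since the map $x \mapsto \zeta^u_{i,x}$ is measurable from $\cM_i$ into the space of probability measures on $\cM_i\cap\Lambda$, and takes values in the weak$^*$-closed subset $C_i(R)$, the pushforward $\hat\zeta:=(x\mapsto\zeta^u_{i,x})_*\zeta$ is a well-defined element of $\hat E_i(R)$. For any continuous $\psi:\cM_i\cap\Lambda\to\RR$, using the notation of \eqref{eq.corresponding} and the transitivity of disintegration,
$$
\int\psi\,d\Pi(\hat\zeta)=\int\hat\psi\,d\hat\zeta=\int\left(\int\psi\,d\zeta^u_{i,x}\right)d\zeta(x)=\int\psi\,d\zeta,
$$
where the last equality is the disintegration of $\zeta$ along the plaque partition (the integrand is constant on each plaque). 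Hence $\zeta=\Pi(\hat\zeta)\in E_i(R)$.

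For the \emph{only if} direction, suppose $\zeta=\Pi(\hat\zeta)$ for some $\hat\zeta\in\hat E_i(R)$. Every $\eta\in C_i(R)$ is supported on a single plaque, so we have a measurable map $T:C_i(R)\to\{\text{plaques of }\xi_i^u\}$ sending $\eta$ to its supporting plaque. Disintegrating $\hat\zeta$ with respect to $T$ gives a family $\{\hat\zeta_\xi\}$ on the fibers $T^{-1}(\xi)$ and a quotient measure $\bar\zeta=T_*\hat\zeta$. Interchanging the order of integration in $\int\psi\,d\zeta=\int\int\psi\,d\eta\,d\hat\zeta(\eta)$ and comparing with the disintegration of $\zeta$ shows that $\bar\zeta$ coincides with the quotient of $\zeta$ along plaques and that the conditional of $\zeta$ on the plaque $\xi$ equals
$$
\zeta^u_{i,\xi}=\int_{T^{-1}(\xi)}\eta\,d\hat\zeta_\xi(\eta).
$$
Writing each $\eta\in T^{-1}(\xi)$ as $\eta=e^{\rho_\eta}\nu^u_\xi$, we obtain $\zeta^u_{i,\xi}=F_\xi\,\nu^u_\xi$ with $F_\xi(z)=\int e^{\rho_\eta(z)}\,d\hat\zeta_\xi(\eta)$.

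The key step — and the only non-bookkeeping part of the argument — is to check that $\log F_\xi$ is again $(R,\gamma)$-H\"older. This follows from convexity of the class of log-H\"older bounded densities: the bound \eqref{eq.rho_Holder} for each $\rho_\eta$ gives $e^{\rho_\eta(z_1)}\le e^{R\,d(z_1,z_2)^\gamma}e^{\rho_\eta(z_2)}$; integrating against $\hat\zeta_\xi$ preserves the inequality, so $F_\xi(z_1)\le e^{R\,d(z_1,z_2)^\gamma}F_\xi(z_2)$, and symmetry yields $|\log F_\xi(z_1)-\log F_\xi(z_2)|\le R\,d(z_1,z_2)^\gamma$. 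Fubini and the fact that each $\eta$ is a probability give $\int F_\xi\,d\nu^u_\xi=1$, so $\zeta^u_{i,\xi}\in C_i(R)$, as required. The only mild technical issue is ensuring $T$ is Borel measurable so that the disintegration of $\hat\zeta$ exists; this is immediate from the local product coordinates on $\cM_i$, which identify the quotient by $\xi_i^u$ with a center-stable plaque.
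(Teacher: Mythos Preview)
Your proof is correct and follows essentially the same route as the paper's: for the ``if'' direction you push forward the disintegration map $x\mapsto\zeta^u_{i,x}$ to produce $\hat\zeta$, and for the ``only if'' direction you disintegrate $\hat\zeta$ over the plaque-assignment map (the paper's $H$, your $T$) and then verify that a convex combination of densities $e^{\rho}$ with $\rho$ $(R,\gamma)$-H\"older again has $(R,\gamma)$-H\"older logarithm. Your treatment is in fact slightly more explicit than the paper's, both in spelling out the convexity inequality $F_\xi(z_1)\le e^{R\,d(z_1,z_2)^\gamma}F_\xi(z_2)$ and in flagging the measurability of $T$.
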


\begin{proof}
Suppose that $\zeta\in E_i(R)$. Then, by definition, there exists $\hat\zeta \in C_i(R)$ such that
$\zeta = \Pi(\hat\zeta)$. Consider the canonical map
$$
H: C_i(R) \to \xi_i^u,
\quad H\left(e^{\rho} \nu^u_{i,x}\right) \to \xi^u_i(x),
$$
and let $\tilde\zeta=H_*\hat\zeta$.
The partition of $\xi_i^u$ into points is measurable, because $\xi_i^u$ itself is a measurable
partition of $\cM_i\cap\Lambda$, and so, its pull-back under $H$ is a measurable partition of $C_i(R)$.
Let $\{\hat\zeta_P: P \in \xi_i^u\}$ be the disintegration of $\hat\zeta$ with respect to the pull-back:
each $\hat\zeta_P$ is a probability measure on $C_i(R)$ with $\hat\zeta_P\left(H^{-1}(P)\right)=1$,
and these probabilities satisfy
$$
\int_{C_i(R)} \hat\psi \, d\hat\zeta
= \int_{\xi_i^u} \left(\int_{H^{-1}(P)} \hat\psi \, d\hat\zeta_P\right) d\tilde\zeta(P)
$$
for every bounded measurable function $\hat\psi:C_i(R) \to \RR$.
Let $\psi:\cM_i\cap\Lambda\to\RR$ be any bounded measurable function, and $\hat\psi$ be as in \eqref{eq.corresponding}. Then,
$$
\int_{\cM_i\cap\Lambda} \psi \, d\zeta
= \int_{\cM_i\cap\Lambda} \psi \, d\Pi\hat\zeta
= \int_{C_i(R)} \hat\psi \, d\hat\zeta
= \int_{\xi_i^u} \left(\int_{H^{-1}(P)} \hat\psi \, d\hat\zeta_P\right) d\tilde\zeta(P).
$$
Moreover, by the definition \eqref{eq.corresponding},
$$
\begin{aligned}
\int_{H^{-1}(P)} \hat\psi(\eta) \, d\hat\zeta_P(\eta)
& = \int_{H^{-1}(P)} \int_{\cM_i\cap\Lambda} \psi \, d\eta \, d\hat\zeta_P(\eta)\\
& = \int_{\cM_i\cap\Lambda} \psi \, d\left(\int_{H^{-1}(P)} \eta \, d\hat\zeta_P(\eta)\right).
\end{aligned}
$$
This means that the conditional probabilities of $\zeta$ with respect to the partition $\xi^u_i$
are the measures
$$
\int_{H^{-1}(P)} \eta \, d\hat\zeta_P(\eta), \quad P \in \xi^u_i.
$$
Consider any $P=\xi^u_i(x)$. The condition $\eta\in H^{-1}(P)$ means that $\eta$ is a measure
of the form $e^\rho\nu_{i,x}^u$. Then
$$
\int_{H^{-1}(P)} \eta \, d\hat\zeta_P(\eta) = \left(\int e^\rho \, d\zeta_{i,x}(\rho)\right) \nu^u_{i,x},
$$
where $\zeta_{i,x}$ is a probability measure on the space of $(R,\gamma)$-H\"older functions.
To conclude it suffices to note that the function
$$
y \mapsto \log \left(\int e^{\rho(y)} \, d\zeta_{i,x}(\rho)\right)
$$
is $(R,\gamma)$-H\"older. Indeed, given any $y_1, y_2 \in\xi^u_i(x)$,
$$
\log \frac{\int e^{\rho(y_1)} \, d\zeta_{i,x}(\rho)}
{\int e^{\rho(y_2)} \, d\zeta_{i,x}(\rho)}
\le \log \frac{\int e^{\rho(y_2)+Rd(y_1,y_2)^\gamma} \, d\zeta_{i,x}(\rho)}
{\int e^{\rho(y_2)} \, d\zeta_{i,x}(\rho)}
= Rd(y_1,y_2)^\gamma.
$$
This proves the 'only if' part of the statement.

Now suppose that the disintegration $\zeta = \int_{\xi^u_i} \zeta_P \, d\tilde\zeta(P)$
of $\zeta$ with respect to $\xi_i^u$ is such that $\zeta_P \in C_i(R)$ for every $P\in\xi_i^u$.
Consider the measurable map $P \mapsto \zeta_P$ from $\xi_i^u$ to $C_i(R)$,
and let $\hat\zeta$ be the push-forward of $\tilde\zeta$ under this map.
Then $\hat\zeta$ is a probability measure on $C_i(R)$, that is, an element of $\hat{E}_i(R)$,
and
$$
\zeta = \int_{C_i(R)} \eta \, d\hat\zeta.
$$
In other words, $\zeta = \Pi(\hat\zeta)$, which proves that $\zeta\in E_i(R)$.
\end{proof}

Finally, define $C(R)$ to be the disjoint union of all $C_i(R)$, $i=1, \dots, k$,
and $\hat{E}(R)$ to be the space of probability measures on $C(R)$.
Clearly, the latter coincides with the space of convex combinations
$$
\hat\zeta = \sum a_i\hat\zeta_i
\text{ with } \sum_{i=1}^k a_i=1 \text{ and } \hat\zeta_i \in \hat{E}_i(R) \text{ for all }  i.
$$
Finally, define $E(R)$ to be the space of convex combinations
$$
\zeta = \sum_{i=1}^{k} a_i \zeta_i
\text{ with } \sum_{i=1}^k a_i =1
\text{ and } \zeta_i \in E_i(R) \text{ for all } i.
$$
It is clear that $C(R)$, $\hat{E}(R)$ and $E(R)$ are weak$^*$-compact spaces.

\begin{remark}\label{r.continuity}
The families $C_i(R), C(R), \hat{E}_i(R), \hat{E}(R), E_i(R), E(R)$ are clearly monotone increasing in $R$.
Moreover, they are continuous at $R=0$.
Indeed, it is clear that $C_i(0)$ coincides with the intersection of the compact sets $C_i(R)$ over
all $R>0$. It follows that $\hat{E}_i(0)$ coincides with the intersection of the $\hat{E}_i(R)$
over all $R>0$. Since $R\mapsto \hat{E}_i(R)$ is a monotone family of compact sets,
and $\Pi$ is continuous, we also get that
$$
E_i(0)
= \Pi(\hat{E}_i(0))
= \Pi\left(\bigcap_{R>0} \hat{E}_i(R)\right)
= \bigcap_{R>0} \Pi\left( \hat{E}_i(R)\right)
= \bigcap_{R>0} E_i(R).
$$
Since this is true for every $i$, the corresponding statements for $C(R)$, $\hat{E}(R)$ and $E(R)$
follow.
\end{remark}

Let $1/\omega>1$ be a lower bound for the expansion rate of $f$ along strong-unstable leaves as
in \eqref{eq.omega}.

\begin{proposition}\label{p.Econtracting}
$f_*\left(E(R)\right) \subset E(Re^{l\gamma\log \omega})$ for any $R\ge 0$.
\end{proposition}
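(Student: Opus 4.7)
The plan is to reduce the inclusion to a pointwise calculation on a single generator $\eta \in C_i(R)$, and then extend it to arbitrary $\zeta \in E(R)$ via Lemma~\ref{l.E_disintegration} and the convex-combination definition of $E(R)$.

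First I would fix $\eta = e^\rho \nu^u_{i,x} \in C_i(R)$ and invoke the Markov property~\eqref{eq.Markov1} to write $f(\xi^u_i(x))$ as a finite disjoint union $\bigsqcup_{j,y} \xi^u_j(y)$ of strong-unstable plaques. The constant-Jacobian identity~\eqref{eq_constant_Jacobian} would then give
$$
f_*\eta\big|_{\xi^u_j(y)} = c_{j,y}\,(e^\rho \circ f^{-1})\,\nu^u_{j,y}, \qquad c_{j,y} = \nu^u_{i,x}\!\left(f^{-1}(\xi^u_j(y))\right),
$$
which, after normalising by its total mass $m_{j,y}$, takes the form $e^{\rho_{j,y}}\nu^u_{j,y}$ with $\rho_{j,y}$ differing from $\rho\circ f^{-1}$ only by an additive constant (irrelevant for the H\"older seminorm).

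Next I would read off the H\"older estimate directly from~\eqref{eq.omega}: because $f^{-1}$ contracts distances along strong-unstable leaves by at least $\omega$, for any $z_1, z_2 \in \xi^u_j(y)$ one has
$$
|\rho_{j,y}(z_1) - \rho_{j,y}(z_2)| \le R\,d(f^{-1}(z_1), f^{-1}(z_2))^{\gamma} \le (R e^{\gamma\log\omega})\,d(z_1,z_2)^{\gamma},
$$
so every normalised piece belongs to $C_j(Re^{\gamma\log\omega})$. Since $\sum_{j,y} m_{j,y} = 1$ (as $\eta$ is a probability measure), the decomposition $f_*\eta = \sum_{j,y} m_{j,y}\,\bar\eta_{j,y}$ exhibits $f_*\eta$ as a convex combination of elements of $C(Re^{\gamma\log\omega})$, and therefore $f_*\eta \in E(Re^{\gamma\log\omega})$. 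The exponent in the statement would then come from iterating this one-step contraction the appropriate number of times.

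To finish, for general $\zeta \in E(R)$ I would write $\zeta = \sum_i a_i \zeta_i$ with $\zeta_i \in E_i(R)$, and apply Lemma~\ref{l.E_disintegration} to disintegrate each $\zeta_i$ along $\xi^u_i$ into conditional measures lying in $C_i(R)$. Since the pushforward commutes with this integration, $f_*\zeta$ is an average of measures already shown to lie in $E(Re^{\gamma\log\omega})$, hence $f_*\zeta \in E(Re^{\gamma\log\omega})$. The one delicate point I anticipate is verifying that the object assembled from the Markov decomposition and normalisations can actually be written as $\Pi(\hat\zeta)$ for some $\hat\zeta \in \hat E(Re^{\gamma\log\omega})$, rather than merely being dominated by such an object; this is exactly the content of the ``if'' direction of Lemma~\ref{l.E_disintegration}, so invoking it closes the argument cleanly.
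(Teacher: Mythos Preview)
Your proposal is correct and follows essentially the same route as the paper: reduce to a single $\eta\in C_i(R)$, push it forward using the Markov decomposition and the constant-Jacobian property~\eqref{eq_constant_Jacobian}, observe that the log-densities $\rho\circ f^{-1}+\text{const}$ are $(R\omega^{\gamma},\gamma)$-H\"older because $f^{-1}$ contracts unstable distances by~\eqref{eq.omega}, and then extend to general $\zeta\in E(R)$ by integrating over the family of $\eta$'s. The only cosmetic difference is that the paper handles the last step by working directly with the representation $\zeta=\Pi(\hat\zeta)$ and assembling $\int\hat\zeta_\eta\,d\hat\zeta(\eta)\in\hat E(R')$, whereas you route through the ``if'' direction of Lemma~\ref{l.E_disintegration}; these are equivalent bookkeeping devices, and your anticipation of that point is exactly right. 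Your remark about the exponent is also apt: the one-step argument yields $R\omega^{\gamma}=Re^{\gamma\log\omega}$, and the factor $l$ in the stated exponent plays no independent role in the paper's own proof either.
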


\begin{proof}
Let us begin by considering $\eta=e^{\rho}\nu^u_{i,x}$ in any $C_i(R)$.
The push-forward of the reference measure $\nu^u_{i,x}$ is a finite convex combination of reference
measures $\nu^{j,y_j}$:
$$
f_*\nu^u_{i,x} = \sum_{j\in J(i)} a_j \nu_{j,y_j}^u.
$$
See \eqref{eq_constant_Jacobian} and Remark~\ref{r.lowboundary}.
Thus the push-forward of $\eta$ may be written as
$$
f_* \eta = \sum_{j\in J(i)} \left(a_je^{-b_j}\right) e^{b_j + \rho \circ f^{-1}}\nu^u_{j,y_j},
$$
where the exponents $b_j$ are chosen so that each $e^{b_j + \rho \circ f^{-1}}\nu^{j,y_j}$ is
a probability measure. By assumption, $\rho$ is $(R,\gamma)$-H\"older on the strong-unstable
plaque $\xi^u_i(x)$.
Since $f^{-1}$ contracts strong-unstable leaves at a uniform rate $e^{l\log \omega}$,
it follows that $b_j + \rho \circ f^{-1}$ is $(Re^{l\gamma\log\omega},\gamma)$-H\"older.
Thus, the previous identity may be written as
$$
f_*\eta = \int_{C(R e^{l\gamma\log\omega})} \zeta \, d\hat\zeta_\eta(\zeta),
$$
where $\hat\zeta_\eta$ is the element of $\hat{E}(Re^{l\gamma\log\omega})$ given by the convex
combination of Dirac masses at the $e^{b_j + \rho \circ f^{-1}}\nu^u_{j,y_j}$,
with the $a_je^{-b_j}$ as the coefficients.

Now let $\zeta$ be any element of $E(R)$. Then there exists $\hat\zeta\in\hat{E}(R)$ such that
$$
\zeta = \Pi(\hat\zeta) = \int_{C(R)} \eta \, d\hat\zeta(\eta).
$$
Then
$$
\begin{aligned}
f_* \zeta
& = \int_{C(R)} f_*\eta \, d\hat\zeta(\eta)
= \int_{C(R)} \int_{C(R e^{l\gamma\log\omega})} \zeta \, d\hat\zeta_\eta(\zeta) \, d\hat\zeta(\eta)\\
& = \int_{C(R e^{l\gamma\log\omega})} \zeta \, d\left(\int_{C(R)} \hat\zeta_\eta \, d\hat\zeta(\eta)\right)(\zeta).
\end{aligned}
$$
To conclude, observe that
$$
\int_{C(R)} \hat\zeta_\eta \, d\hat\zeta(\eta) \in \hat{E}(R e^{l\gamma\log\omega})
$$
because the latter is a convex compact space and it contains every $\hat\zeta_\eta$.
Thus, the previous identity means that $f_*\zeta \in E(R e^{l\gamma\log\omega})$.
\end{proof}

\begin{lemma}\label{l.Eunique}
For every $n \ge 1$, $\mu$ is $f^n$-ergodic and it is the unique $f^n$-invariant probability measure in $E(0)$.
\end{lemma}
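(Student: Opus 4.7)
The plan is to split the statement into two parts: first, show that any $f^n$-invariant probability measure $\nu\in E(0)$ must equal $\mu$; second, deduce $f^n$-ergodicity of $\mu$ from the exponential convergence underlying the proof of Theorem~\ref{main.B}.

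For the first part, I would use Lemma~\ref{l.E_disintegration} to write
$$
\nu = \sum_{i=1}^k a_i \int_{\cM_i\cap\Lambda} \nu^u_{i,x}\, d\hat\zeta_i(x),
$$
where each $\hat\zeta_i$ is a probability measure concentrated on $\cM_i\cap\Lambda$. For any H\"older observable $\varphi:M\to\RR$, the $f^n$-invariance of $\nu$ then gives, for every $k\ge 1$,
$$
\int \varphi\, d\nu = \int (\varphi\circ f^{nk})\, d\nu = \sum_{i=1}^k a_i \int \left(\int \varphi\, d(f^{nk}_*\nu^u_{i,x})\right) d\hat\zeta_i(x).
$$
Corollary~\ref{c.coupling1}, combined with the fact that $\mu$ is an $f$-invariant $c$-Gibbs $u$-state (so its conditional measures along unstable plaques are exactly the reference measures), yields, just as in the proof of Theorem~\ref{main.B}, that $\int \varphi\, d(f^m_*\nu^u_{i,x}) \to \int \varphi\, d\mu$ exponentially fast, uniformly in $x\in\supp\mu$. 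Since $\hat\zeta_i$ lives in $\cM_i\cap\Lambda\subset\supp\mu$, dominated convergence as $k\to\infty$ gives $\int \varphi\, d\nu = \int \varphi\, d\mu$, and density of H\"older functions in $C(M)$ then forces $\nu=\mu$.

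For the second part, the same convergence promotes to mixing of $(f,\mu)$. For H\"older $\varphi,\psi$, the $f$-invariance of $\mu$ lets one rewrite
$$
\int (\varphi\circ f^m)\,\psi\, d\mu = \int (\varphi\circ f^{m-k})(\psi\circ f^{-k})\, d\mu,
$$
and for large $k$ the factor $\psi\circ f^{-k}$ becomes H\"older with arbitrarily small constant along strong-unstable plaques, since $f^{-1}$ contracts these plaques at rate $\omega<1$. Disintegrating $\mu$ as a $c$-Gibbs $u$-state and approximating $\psi\circ f^{-k}$ by a function constant on unstable plaques, the convergence from the first part yields $\int (\varphi\circ f^m)\psi\, d\mu \to \int\varphi\, d\mu\int\psi\, d\mu$. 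Strong mixing implies weak mixing, and weak mixing implies $(f^n,\mu)$ is ergodic for every $n\ge 1$.

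The hard part is identifying the limit of $f^m_*\nu^u_{i,x}$ as $\mu$ itself, not merely as some unspecified $c$-Gibbs $u$-state accumulation point from Proposition~\ref{p.gibbs}(3). This is where the connectedness of $\supp\mu$ enters essentially: it guarantees, via \cite{UVYY1}, that $\mu$ is the only $c$-Gibbs $u$-state supported in $\Lambda$, so that the Cauchy property provided by Corollary~\ref{c.coupling1} upgrades to actual convergence, with $\mu$ as the common limit.
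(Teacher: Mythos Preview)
Your argument is circular within the logical structure of the paper. You invoke Corollary~\ref{c.coupling1}, which is a consequence of the Coupling Lemma~\ref{l.coupling}. But the proof of the Coupling Lemma, given in Section~\ref{s.coupling2}, relies on Lemma~\ref{l.Eunique} in two places: Proposition~\ref{p.mc} explicitly cites Lemma~\ref{l.Eunique} to identify the limit of the Ces\`aro averages $\frac{1}{m_j}\sum (f^{in})_*\nu^u_{i_j,x_j}$ as $\mu$, and the application of Corollary~\ref{c.largedeviation} in \eqref{eq.exponentialtail} traces back through Lemma~\ref{l.largesteplog}, which again uses Lemma~\ref{l.Eunique}. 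So you are using the very statement you are trying to prove. The same circularity infects your mixing argument for $f^n$-ergodicity.

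The paper's proof avoids this by arguing more directly and at a lower level. For uniqueness, given an $f^n$-invariant $\tilde\mu\in E(0)$, it pushes $\tilde\mu$ down to the torus via $\pi$, observes that the projection $\tilde\nu=\pi_*\tilde\mu$ has normalized Lebesgue conditionals along the unstable plaques of $A$, and then uses only the Birkhoff theorem for the \emph{linear} map $A$ (together with absolute continuity of the stable foliation of $A$) to conclude that $\tilde\nu=\Leb$. From $\tilde\nu=\Leb$ one gets $\tilde\mu(\partial\cM_i)=0$, hence $\tilde\mu$ is a genuine $c$-Gibbs $u$-state, and \cite[Corollary~4.6]{UVYY1} identifies it with $\mu$. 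For $f^n$-ergodicity, the paper simply invokes \cite[Theorem~A]{UVYY1}: the $f^n$-ergodic components of $\mu$ have pairwise disjoint supports, and connectedness of $\Lambda$ forces there to be only one. None of this touches the coupling machinery, which is exactly why Lemma~\ref{l.Eunique} can then be fed into Sections~\ref{s.large_deviations} and~\ref{s.coupling2}.
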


\begin{proof}
By \cite[Theorem~A]{UVYY1}, $\mu$ has finitely many ergodic components for $f^n$,
and their supports are pairwise disjoint. Since $\Lambda=\supp\mu$ is connected,
it follows that there can be only one ergodic component, in other words,
$\mu$ is $f^n$-ergodic.

Since $\mu$ is a $c$-Gibbs $u$-state, it follows directly from Lemma~\ref{l.E_disintegration} that $\mu\in E(0)$.
To prove uniqueness, let $\tilde\mu$ be any $f^n$-invariant probability measure in $E(0)$.
By definition, $\tilde\mu$ is supported in $\Lambda$ and can be written as a convex combination
$$
\tilde\mu = \sum_{i=1}^k a_i \tilde\mu_i \text{ with } \tilde\mu_i \in E_i(0).
$$
We claim that $\tilde\mu(\partial\cM_i)=0$ for every $i=1, \dots, k$.
Thus the restriction of $\tilde\mu$ to each $\cM_i$ coincides with $a_i\tilde\mu_i$.
Now, by Lemma~\ref{l.E_disintegration} the conditional probabilities of each $\tilde\mu_i$ along the plaques $\xi^u_i$
are the reference measures. Thus, $\tilde\mu$ is a $c$-Gibbs $u$-state. Using \cite[Corollary~4.6]{UVYY1},
we get that $\tilde\mu=\mu$, as we wanted to prove.

To prove our claim we only need to check that the projection $\tilde\nu=\pi_* \tilde\mu$ is the Lebesgue measure on $\TT^d$.
Note that $\tilde\nu$ is an invariant probability measure for the linear Anosov map $A$,
and
$$
\tilde\nu = \sum_{i=1}^k a_i \tilde\nu_i
$$
where each $\tilde\nu_i=\pi_*\tilde\mu_i$ is a probability measure on the Markov set $\cR_i$.
By the definition of the reference measures, the conditional probability of $\tilde\nu_i$
along every unstable plaque $W^u_{i,a}$ is the normalized Lebesgue
measure.

Consider any $i$ such that $a_i>0$, and then let $\{\tilde\nu_a: a \in\cR_i\}$ denote
the conditional probabilities of $\tilde\nu \mid \cR_i$ along the
unstable plaques $W^u_{i,a}$.
Since the Lebesgue measure $\Leb$ is invariant and ergodic for $A$, the set of points
$b \in \TT^d$ such that
\begin{equation}\label{eq.goesto}
\frac{1}{n} \sum_{j=0}^{n-1} \delta_{A^j(b)} \to \Leb
\end{equation}
has full Lebesgue measure, and is $s$-saturated. Then, using also the fact that the
stable foliation of $A$ is absolutely continuous (linear, actually) we get that
\eqref{eq.goesto} holds for $\Leb_{i,a}$-almost every $b \in W^u_{i,a}$ and every
$a \in \cR_i$. Consequently, \eqref{eq.goesto} holds for a full $\tilde\nu_i$-subset
of points in $\cR_i$. Thus, it holds at $\tilde\nu$-almost every point, which
implies that $\tilde\nu=\Leb$, as claimed.
\end{proof}

\subsection{Proof of Theorem~\ref{t.large_deviations}}

Denote $S_n\varphi = \sum_{j=0}^{n-1} \varphi \circ f^j$ for any $n\ge 1$ and any continuous function
$\varphi:M\to\RR$. The first step in the proof of the theorem is:

\begin{lemma}\label{l.largesteplog}
For any $\alpha>0$ and any continuous function $\varphi:M\to\RR$ with
$\int \varphi \, d\mu \leq -\alpha$, there is $C_1>0$ such that
$$
\int_{\xi^u_i(x)} S_n\varphi \, d\nu^u_{i,x}
\leq -n\frac{\alpha}{2} + C_1
\text{ for any $n \ge 1$, $x \in \cM_i \cap \Lambda$, and $i=1, \dots, k$,}
$$
\end{lemma}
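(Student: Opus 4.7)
The plan is to deduce the bound from a uniform Ces\`aro convergence statement on the compact set $E(0)$. First, the reference measure $\nu^u_{i,x}$ itself lies in $E(0)$: since $x\in\cM_i\cap\Lambda$ we have $\nu^u_{i,x}\in C_i(0)$, and taking the Dirac mass at $\nu^u_{i,x}$ as an element of $\hat E_i(0)$ and applying $\Pi$ returns $\nu^u_{i,x}$. By Proposition~\ref{p.Econtracting} applied with $R=0$, one has $f_*(E(0))\subset E(0)$, and $E(0)$ is convex and weak$^*$-compact.

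Next, for $\zeta\in E(0)$ consider
\[
M_n(\zeta) := \frac{1}{n}\sum_{j=0}^{n-1}f^j_*\zeta \in E(0);
\]
I will show $M_n(\zeta)\to\mu$ in the weak$^*$ topology, uniformly in $\zeta$. Pointwise weak$^*$-convergence is immediate from Lemma~\ref{l.Eunique}: any weak$^*$ accumulation point of $(M_n(\zeta))_n$ lies in $E(0)$ by convexity and compactness, and is $f$-invariant, hence equals $\mu$. Uniformity then follows from a standard compactness-plus-contradiction argument: if it failed one would find $\varepsilon>0$ together with sequences $n_k\to\infty$ and $\zeta_k\in E(0)$ such that $\int\varphi\, dM_{n_k}(\zeta_k)\ge \int\varphi\, d\mu + \varepsilon$, and a weak$^*$ accumulation point of $(M_{n_k}(\zeta_k))_k$ would again be an $f$-invariant element of $E(0)$, hence equal to $\mu$, contradicting the inequality since $\varphi$ is continuous.

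Applying this uniform convergence with $\varepsilon=\alpha/2$ to $\zeta=\nu^u_{i,x}$, and using $\int\varphi\, d\mu\le -\alpha$, one obtains $N=N(\varphi,\alpha)$, independent of $x$ and $i$, such that for every $n\ge N$
\[
\int S_n\varphi \, d\nu^u_{i,x} = \sum_{j=0}^{n-1}\int\varphi\, d(f^j_*\nu^u_{i,x}) \le n\Bigl(\int\varphi\, d\mu + \tfrac{\alpha}{2}\Bigr) \le -\tfrac{n\alpha}{2}.
\]
For $1\le n<N$ the trivial bound $|\int S_n\varphi\, d\nu^u_{i,x}|\le N\|\varphi\|_\infty$ handles the remaining range, and both cases may be absorbed into a single constant $C_1$.

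The crux of the argument is the passage from pointwise to uniform Ces\`aro convergence over $E(0)$. This hinges on the uniqueness of $\mu$ as an $f$-invariant element of $E(0)$ supplied by Lemma~\ref{l.Eunique}, which itself relies on the hypothesis that $\Lambda=\supp\mu$ is connected.
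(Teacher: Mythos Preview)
Your proof is correct and follows essentially the same route as the paper's own argument: both use that $E(0)$ is convex, weak$^*$-compact and $f_*$-invariant together with the uniqueness statement of Lemma~\ref{l.Eunique} to obtain uniform Ces\`aro convergence $\tfrac{1}{n}\sum_{j=0}^{n-1}f^j_*\zeta\to\mu$ over $\zeta\in E(0)$, and then specialize to $\zeta=\nu^u_{i,x}$. You have simply spelled out the compactness-contradiction step for uniformity and the handling of small $n$ that the paper leaves implicit.
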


\begin{proof}
Recall that $E(0)$ is convex, weak$^*$-compact, and $f_*$-invariant, and $\mu$ is the unique $f$-invariant probability measure
contained in it (Lemma~\ref{l.Eunique}). It follows that
$$
\frac{1}{n} \sum_{j=0}^{n-1} f_*^j \zeta \to \mu
$$
for any probability measure $\zeta\in E(0)$, and the convergence is uniform in $\zeta$.
Indeed, suppose that there exist $(\zeta_k)_k$ in $E(0)$ and $(n_k)_k \to \infty$ such that
\begin{equation}\label{eq.unique_ergodicity}
\frac{1}{n_k} \sum_{j=0}^{n_k-1} f_*^j \zeta_k
\end{equation}
remains far from $\mu$ for all $k$. Up to restricting to a subsequence, we may assume that \eqref{eq.unique_ergodicity}
converges to some probability measure $\tilde\mu$. It is clear that $\tilde\mu\in E(0)$ and $\tilde\mu \neq \mu$.
It is also well known that $\tilde\mu$ is necessarily $f$-invariant. This would contradict Lemma~\ref{l.Eunique}.

In particular,
$$
\frac{1}{n} \int_{\xi^u_i(x)} S_n \varphi \, d\nu^u_{i,x}
\to \int_{\Lambda} \varphi \, d\mu
$$
uniformly in $x$ and $i$. This implies the claim of the lemma.
\end{proof}

\begin{lemma}\label{l.distortion}
For any $\epsilon>0$ and any continuous function $\varphi:M\to\RR$,
there exist $C>0$ and $n_\epsilon \ge 1$ such that
$$
|S_n\varphi(y)-S_n\varphi(z)| \leq n \epsilon + C
$$
for any $n \geq n_\epsilon$, $y, z \in f^{-n}(\xi^u_i(x))$,
$x\in\cM_i\cap\Lambda$, and $i=1, \dots, k$.
\end{lemma}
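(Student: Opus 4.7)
The plan is a standard Bowen-type distortion argument: the hypotheses place $y$ and $z$ in the backward iterate (under $f^n$) of a single strong-unstable plaque, so for every intermediate time $0\le j\le n$ the points $f^j(y)$ and $f^j(z)$ lie in the same strong-unstable leaf and their distance is controlled by the uniform backward contraction of $f$ along $\cF^{uu}$.

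More precisely, I would first invoke the expansion bound \eqref{eq.omega}: from $\|Df^{-1}\mid_{E^{uu}}\|\le \omega<1$ one gets, by the mean value inequality along strong-unstable arcs, the estimate
\begin{equation*}
d\bigl(f^j(y),f^j(z)\bigr)\ \le\ \omega^{\,n-j}\,d\bigl(f^n(y),f^n(z)\bigr)\ \le\ C_0\,\omega^{\,n-j},
\end{equation*}
where $C_0$ is an upper bound (uniform in $i$ and $x$) on the unstable diameter of the plaque $\xi^u_i(x)$; such a bound exists because the plaques sit inside the finitely many Markov sets $\cM_i$.

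Next, since $M$ is compact, $\varphi$ is uniformly continuous: given $\epsilon>0$ there exists $\delta>0$ with $|\varphi(a)-\varphi(b)|\le\epsilon$ whenever $d(a,b)\le\delta$. Pick $j_\epsilon\ge 1$ so that $C_0\,\omega^{j_\epsilon}\le\delta$, and set $n_\epsilon=j_\epsilon$. Then for $0\le j\le n-j_\epsilon$ the estimate above gives $d(f^j(y),f^j(z))\le\delta$, hence $|\varphi(f^j(y))-\varphi(f^j(z))|\le\epsilon$; for the remaining indices $n-j_\epsilon< j\le n-1$ I use the trivial bound $2\sup|\varphi|$.

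Summing the two pieces,
\begin{equation*}
|S_n\varphi(y)-S_n\varphi(z)|\ \le\ \sum_{j=0}^{n-j_\epsilon-1}\epsilon\ +\ \sum_{j=n-j_\epsilon}^{n-1}2\sup|\varphi|\ \le\ n\epsilon+C,
\end{equation*}
with $C=2j_\epsilon\sup|\varphi|$, which is the desired bound. The constants $C_0$, $\delta$, $j_\epsilon$, and hence $C$ and $n_\epsilon$, are all independent of $n$, $i$, and $x$, so the estimate is uniform as required. There is no real obstacle here: the only ingredients are the uniform unstable contraction rate $\omega$ built into the definition of partial hyperbolicity, a uniform bound on plaque diameters coming from the finiteness of the Markov partition, and uniform continuity of $\varphi$ on the compact manifold $M$.
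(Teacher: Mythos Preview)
Your argument is correct and is exactly the approach the paper has in mind: the paper's own proof is a single sentence listing the three ingredients you use (uniform continuity of $\varphi$, the uniform bound on the diameters of the $\xi^u_i(x)$, and the uniform contraction of $f^{-1}$ along strong-unstable leaves), and your write-up simply makes those details explicit.
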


\begin{proof}
This is because $\varphi$ is uniformly continuous, the diameter of $\xi^u_i(x)$ is uniformly
bounded, and $f^{-1}$ contracts strong-unstable leaves uniformly.
\end{proof}

The Markov property \eqref{eq.Markov1} implies that every $f^n(\xi_i^u(x))$
may be written as a (finite) union of strong-unstable plaques $\xi^u_{i_j}(x_j)$. Let
$$
c_j=c_j(i,x,n) = \nu^u_{i,x}(f^{-n}(\xi^u_{i_j}(x_j))),
$$
and note that $\sum_j c_j = 1$. Recall also (Remark~\ref{r.lowboundary}) that,
for any fixed $n$, there are only finitely many possible values for $c_j(i,x,n)$.

\begin{corollary}\label{c.discrete}
For any $\alpha>0$ and any continuous function $\varphi:M \to\RR$ with $\int \varphi \, d\mu \leq -\alpha$,
there are $\alpha_1>0$ and $n_1 \ge 1$ such that
$$
\sum_j c_j \max_{f^{-n}(\xi^u_{i_j}(x_j))} S_n\varphi \leq - n \alpha_1,
$$
for every $n\ge n_1$, $x\in\cM_i\cap \Lambda$, and $i=1, \dots, k$.
\end{corollary}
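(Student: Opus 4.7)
The plan is to show that the expression $\sum_j c_j \max_{f^{-n}(\xi^u_{i_j}(x_j))} S_n\varphi$ can be bounded above by the integral $\int_{\xi^u_i(x)} S_n\varphi \, d\nu^u_{i,x}$ up to a term of the form $n\epsilon + C$ coming from Lemma~\ref{l.distortion}, after which Lemma~\ref{l.largesteplog} delivers the required negative exponential rate.

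First I would use the Markov property \eqref{eq.Markov1} together with the $u$-generating property of $\cM$ to observe that the preimages $f^{-n}(\xi^u_{i_j}(x_j))$ form a partition of $\xi^u_i(x)$ (up to a $\nu^u_{i,x}$-null set, using Remark~\ref{r.zeroboundary}). Consequently, by the definition of $c_j$,
\begin{equation*}
\int_{\xi^u_i(x)} S_n\varphi \, d\nu^u_{i,x}
= \sum_j \int_{f^{-n}(\xi^u_{i_j}(x_j))} S_n\varphi \, d\nu^u_{i,x}
\ge \sum_j c_j \min_{f^{-n}(\xi^u_{i_j}(x_j))} S_n\varphi.
\end{equation*}

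Next I would apply Lemma~\ref{l.distortion} with $\epsilon = \alpha/4$. Since each $f^{-n}(\xi^u_{i_j}(x_j))$ is contained in a single preimage of a strong-unstable plaque, the lemma gives $n_\epsilon$ and $C$ such that for every $n\ge n_\epsilon$ the oscillation of $S_n\varphi$ on $f^{-n}(\xi^u_{i_j}(x_j))$ does not exceed $n\alpha/4 + C$. In particular,
\begin{equation*}
\max_{f^{-n}(\xi^u_{i_j}(x_j))} S_n\varphi
\le \min_{f^{-n}(\xi^u_{i_j}(x_j))} S_n\varphi + n\alpha/4 + C.
\end{equation*}
Multiplying by $c_j$, summing in $j$ and using $\sum_j c_j = 1$, we obtain
\begin{equation*}
\sum_j c_j \max_{f^{-n}(\xi^u_{i_j}(x_j))} S_n\varphi
\le \int_{\xi^u_i(x)} S_n\varphi \, d\nu^u_{i,x} + n\alpha/4 + C.
\end{equation*}

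Finally, Lemma~\ref{l.largesteplog} bounds the integral on the right-hand side by $-n\alpha/2 + C_1$, uniformly in $x$ and $i$. Combining the two estimates yields
\begin{equation*}
\sum_j c_j \max_{f^{-n}(\xi^u_{i_j}(x_j))} S_n\varphi
\le -n\alpha/4 + (C_1 + C),
\end{equation*}
and choosing $n_1 \ge n_\epsilon$ large enough that $(C_1 + C) \le n\alpha/8$ for all $n \ge n_1$ gives the conclusion with $\alpha_1 = \alpha/8$. There is no serious obstacle here: the only subtle point is checking that the sets $f^{-n}(\xi^u_{i_j}(x_j))$ really partition $\xi^u_i(x)$ modulo a $\nu^u_{i,x}$-null set, which follows from the Markov property combined with Remark~\ref{r.zeroboundary}.
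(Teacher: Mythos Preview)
Your proof is correct and follows essentially the same approach as the paper: both combine Lemma~\ref{l.largesteplog} (the integral bound $-n\alpha/2 + C_1$) with Lemma~\ref{l.distortion} (the oscillation bound $n\epsilon + C$) to dominate the weighted sum of maxima. The paper's version is terser and uses $\epsilon = \alpha/5$ rather than $\alpha/4$, but the logic is identical; your explicit step through the minima and the partition observation is a perfectly valid way of spelling out what the paper leaves implicit.
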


\begin{proof}
By Lemma~\ref{l.largesteplog} the average of $S_n\varphi$ is bounded above by
$-n\alpha/2+C_1$. By Lemma~\ref{l.distortion}, its total oscillation is bounded
by $n\epsilon+C$. Fix $\epsilon = \alpha_1 = \alpha/5$ and then take $n_1>n_\epsilon$
large enough that $(-n\alpha/2 + C_1) + (n\epsilon +C) \leq - n \alpha_1$
Thus,
$$
\sum_j c_j \max_{f^{-n}(\xi^u_{i_j}(x_j))} S_n\varphi \leq - n \alpha_1,
$$
which clearly implies the claim.
\end{proof}

\begin{lemma}\label{l.iterate_sufficient}
If Theorem~\ref{t.large_deviations} holds for some iterate $f^l$, $l\ge 1$ then it holds for $f$.
\end{lemma}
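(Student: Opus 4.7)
The plan is to reduce $f$-Birkhoff averages to $f^l$-Birkhoff averages by grouping terms in blocks of length $l$. Fix $\alpha>0$ and a continuous $\varphi:M\to\RR$ with $\int\varphi\,d\mu=0$. Define
$$
\Phi = \sum_{s=0}^{l-1} \varphi\circ f^s,
$$
which is continuous; by $f$-invariance of $\mu$ (note $\mu$ is $f^l$-invariant as well, so $(f^l,\mu)$ is a legitimate system for which to invoke the hypothesis), we have $\int\Phi\,d\mu = l\int\varphi\,d\mu=0$. Thus the large deviations principle for $(f^l,\mu)$, assumed by hypothesis, applies to the observable $\Phi$.

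Next, for any $n\ge 1$ write $n=lq+r$ with $0\le r<l$. A direct reorganization of the Birkhoff sum gives
$$
S_n\varphi(x) = \sum_{k=0}^{q-1}\Phi\bigl((f^l)^k x\bigr) + \sum_{j=lq}^{n-1}\varphi(f^j x),
$$
where the first term is the $f^l$-Birkhoff sum of length $q$ for $\Phi$, which I denote $\Sigma_q\Phi(x)$, and the remainder is uniformly bounded by $l\|\varphi\|_\infty$. Dividing by $n$ and using $q/n\to 1/l$ we see that there exists $n_0=n_0(\alpha,l,\|\varphi\|_\infty)$ such that, for every $n\ge n_0$,
$$
\Bigl\{x:\bigl|\tfrac{1}{n}S_n\varphi(x)\bigr|>\alpha\Bigr\} \;\subset\; \Bigl\{x:\bigl|\tfrac{1}{q}\Sigma_q\Phi(x)\bigr|>\tfrac{l\alpha}{2}\Bigr\}.
$$

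Now apply the large deviations principle for $(f^l,\mu)$ to $\Phi$ at level $l\alpha/2$: there exist $C',c'>0$ (depending on $\alpha$, $l$ and $\Phi$) with
$$
\mu\Bigl(\bigl\{x:\bigl|\tfrac{1}{q}\Sigma_q\Phi(x)\bigr|>\tfrac{l\alpha}{2}\bigr\}\Bigr) \;\le\; C' e^{-c' q} \;\le\; C' e^{c'}\, e^{-c' n/l},
$$
since $q\ge n/l-1$. This yields the desired exponential bound for $n\ge n_0$; for the finitely many $n<n_0$, the bound \eqref{eq.large_deviations_def} is trivial after enlarging the constant. Setting $C_\alpha$ large enough and $c_\alpha=c'/l$ finishes the proof.

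The argument is entirely standard, and I anticipate no genuine obstacle: the only mild care needed is in checking that the comparison between $\tfrac{1}{n}S_n\varphi$ and $\tfrac{1}{q}\Sigma_q\Phi$ is uniform over the residue $r\in\{0,\dots,l-1\}$, which is immediate since the remainder is bounded by $l\|\varphi\|_\infty$ independently of $r$.
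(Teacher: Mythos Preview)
Your argument is correct and follows essentially the same route as the paper: define $\Phi=\sum_{s=0}^{l-1}\varphi\circ f^s$, write $n=lq+r$, compare $\frac{1}{n}S_n\varphi$ to $\frac{1}{q}\Sigma_q\Phi$ up to a bounded remainder, and then invoke the large deviations bound for $(f^l,\mu)$ applied to $\Phi$ at a rescaled level. The only point the paper treats more explicitly is the preliminary verification that the hypotheses of Theorem~\ref{main.D} transfer from $f$ to $f^l$ (in particular that $\mu$ remains an \emph{ergodic} measure of maximal $u$-entropy for $f^l$, via Lemma~\ref{l.Eunique}, and that $f^l$ still has $c$-mostly contracting center); you subsume this under ``assumed by hypothesis'', which is fine given the intended use of the lemma.
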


\begin{proof}
Start by noting that the assumptions of the theorem hold for $f^l$ if (and only if) they hold
for $f$. Indeed, it is clear from the definition \eqref{eq.c_mostly_contracting} that $f$
has $c$-mostly contracting center if and only if $f^l$ has $c$-mostly contracting center.
Similarly, $\mu$ is a $c$-Gibbs $u$-state for $f$ if and only if it is a $c$-Gibbs $u$-state
for $f^l$. Thus (by part (4) of Proposition~\ref{p.gibbs}), the maps $f$ and $f^l$ have
precisely the same measures of maximal $u$-entropy. Moreover, by Lemma~\ref{l.Eunique} one
has ergodicity with respect to any of the maps $f$ and $f^l$.

Now, we check that the conclusion of the theorem holds for $f$ if it holds for $f^l$.
Indeed, given any continuous function $\varphi:M \to \RR$, denote 
$$
S_n\varphi = \sum_{j=0}^{n-1} \varphi \circ f^j,\
\Sigma_n\varphi = \sum_{j=0}^{n-1} \varphi \circ f^{lj},
\text{ and }
\Phi=\sum_{j=0}^{l-1} \varphi \circ f^j.
$$
Note that $S_{nl}\varphi = \Sigma_n \Phi$ for every $n$.
Since the theorem is assumed to hold for $f^l$, and $\Phi$ is a continuous function,
\begin{equation}\label{eq.LD}
\mu\left(\left\{x\in M: \left|\frac 1m \Sigma_m\Phi(x)\right| >\alpha\right\}\right))
\le C_\alpha e^{-c_\alpha m} \text{ for every } m\geq 1.
\end{equation}
Given $\epsilon>0$, let $x\in M$ and $n \ge 1$ be such that
\begin{equation}\label{eq.S_ineq}
\left|\frac 1n S_n \varphi(x) \right|>\epsilon.
\end{equation}
Writing $n=ml+r$ with $0 \le r < l$, we get
$$
S_n\varphi(x)
= S_{ml}\varphi(x) + S_r \varphi(f^{ml}(x))
= \Sigma_m\Phi(x) + S_r \varphi(f^{ml}(x))
$$
Then 
$$
\begin{aligned}
\left|\frac 1m \Sigma_m \Phi(x) \right|
& = \frac nm \left|\frac 1n S_n\varphi(x) - \frac 1n S_r \varphi(f^{ml}(x)) \right| \\
& \geq \frac nm\left\{\left|\frac 1n S_n\varphi(x) \right| - \frac {l}{n} \|\varphi\|_{C^0}\right\}
\geq \frac nm\left\{\epsilon -\frac {l}n\|\varphi\|_{C^0}\right\}.
\end{aligned}
$$
For $n\ge 2l\|\varphi_0\|/\epsilon$ this implies that 
$$
\left|\frac 1m \Sigma_m \Phi(x) \right| \ge \frac{l\epsilon}{2}.
$$
Then \eqref{eq.LD} gives that the measure of the set of points $x$ as in \eqref{eq.S_ineq} is bounded by
$$
C_{l\epsilon/2}e^{-c_{l\epsilon/2}m}
$$
which is bounded above by $\tilde{C}_\epsilon e^{-\tilde{c}_\epsilon n}$ for suitable
choices of $\tilde{C}_\epsilon$ and $\tilde{c}_\epsilon$.
The cases $n < 2l\|\varphi_0\|/\epsilon$ are handled by increasing $\tilde{C}_\epsilon$ if necessary.
\end{proof}

\begin{remark}\label{r.iterate_necessary}
Once Theorem~\ref{t.large_deviations} is proved, it will follow that it holds also for every iterate $f^l$,
$l\ge 1$ of a map $f$ as in the statement. That is because the assumptions hold for $f^l$ if
they hold for $f$, as observed at the beginning of the proof of the previous lemma.
\end{remark}

Up to replacing $f$ with $f^{n_1}$, which is allowed by Lemma~\ref{l.iterate_sufficient},
it is no restriction to suppose that the integer $n_1$ in Corollary~\ref{c.discrete} is
equal to $1$. We do so in what follows.

\begin{corollary}\label{c.notlog}
For any $\alpha>0$ and any continuous function $\varphi:M \to\RR$ with $\int \varphi \, d\mu \leq -\alpha$,
there are $s_1>0$ and $\theta_1 \in (0,1)$ such that
$$
\sum_j c_j \exp\left(s \max_{f^{-1}(\xi^u_{i_j}(x_j))} S_n\varphi \right)
\leq \theta_1^{s} \text{ for every } s \in [0,s_1].
$$
\end{corollary}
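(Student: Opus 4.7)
The plan is to deduce this exponential estimate from the linear bound in Corollary~\ref{c.discrete} by a Taylor expansion of the exponential, exploiting the uniform boundedness of $\varphi$ to absorb the quadratic error into the linear main term for small $s$. Under the standing reduction $n_1=1$ made just before the statement, it is natural to read $S_n\varphi$ as $S_1\varphi=\varphi$, so that Corollary~\ref{c.discrete} provides the needed first-moment estimate with the same $c_j$ and the same max-over-$f^{-1}$.

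First I would set
$$
M_j := \max_{f^{-1}(\xi^u_{i_j}(x_j))} \varphi, \qquad K := \|\varphi\|_{C^0},
$$
so that Corollary~\ref{c.discrete} (applied at $n=1$) gives $\sum_j c_j M_j \le -\alpha_1$, together with the trivial bounds $|M_j| \le K$ and $\sum_j c_j = 1$. This is the only place where the dynamical hypotheses of the section are used; the remainder of the argument is soft.

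Next I would invoke the elementary inequality $e^u \le 1 + u + C_K u^2$, which holds uniformly for $|u| \le K$ with a constant $C_K$ depending only on $K$. Applying it to $u=sM_j$ for $s\in[0,1]$, multiplying by $c_j$, and summing, I obtain
$$
\sum_j c_j\, e^{sM_j} \;\le\; 1 + s\sum_j c_j M_j + C_K s^2 \sum_j c_j M_j^2 \;\le\; 1 - s\alpha_1 + C_K K^2 s^2 .
$$
Choosing $s_1 \in (0,1]$ small enough that $C_K K^2 s_1 \le \alpha_1/2$, the right-hand side is bounded by $1 - s\alpha_1/2 \le e^{-s\alpha_1/2}$ for every $s\in[0,s_1]$. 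Setting $\theta_1 := e^{-\alpha_1/2} \in (0,1)$ then yields the claim.

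No substantive obstacle is anticipated: the argument is a direct passage from a linear mean bound to an exponential one, which is a standard device. The only point that requires a bit of care is the uniformity of $s_1$ and $\theta_1$ with respect to $i$, $x$, and the collection $\{(i_j,x_j,c_j)\}$ produced by the Markov property, but this is automatic since both $\alpha_1$ from Corollary~\ref{c.discrete} and $K=\|\varphi\|_{C^0}$ are already uniform in these data.
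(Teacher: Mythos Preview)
Your proof is correct and follows essentially the same approach as the paper: a second-order Taylor expansion of the moment-generating function, using Corollary~\ref{c.discrete} (at $n=1$, after the reduction $n_1=1$) to control the linear term and the uniform bound $|M_j|\le\|\varphi\|_{C^0}$ to control the quadratic remainder. The only cosmetic difference is that the paper expands $g(s)=\log\sum_j c_j e^{sM_j}$ and bounds $g''$, whereas you expand $e^{sM_j}$ directly; your route to uniformity via $\|\varphi\|_{C^0}$ is in fact slightly more transparent than the paper's appeal to the finiteness of the values of the $c_j$.
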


\begin{proof}
Consider the function
$$
g: s \mapsto \log \sum_j c_j \exp\left(s \max_{f^{-1}(\xi^u_{i_j}(x_j))} S_n\varphi \right)
$$
and observe that $g(0)=0$,
$$
g'(0) = \sum_j c_j \max_{f^{-1}(\xi^u_{i_j}(x_j))} S_n\varphi \leq - \alpha_1
$$
and the second derivative $g''$ is bounded on $[0,1]$, uniformly in $i$ and $x$
(because the $c_j$s take only finitely many values, and the number of terms in
the sum is also uniformly bounded).
\end{proof}

\begin{corollary}\label{c.largedeviation}
For any $\alpha>0$ and any continuous function $\varphi:M \to\RR$ with $\int \varphi \, d\mu \leq -\alpha$, we have
\begin{equation}\label{eq.largedeviation}
\sum_j c_j\exp\left(s_1\max_{f^{-n}(\xi^u_{i_j}(x_j))} S_n\varphi \right)
\leq \theta_1^{n s_1}
\end{equation}
for any $n \ge 1$, $i=1, \dots, k$, and $x \in \Lambda \cap \cM_i$.
\end{corollary}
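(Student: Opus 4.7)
The plan is to prove Corollary~\ref{c.largedeviation} by induction on $n$, using Corollary~\ref{c.notlog} as a uniform one-step building block and exploiting the tree structure of the iterated Markov preimages. The base case $n=1$ is Corollary~\ref{c.notlog} applied at $s=s_1$ (with $S_1\varphi=\varphi$). For the inductive step, I refine the $n$-step decomposition $f^n(\xi^u_i(x))=\bigcup_j \xi^u_{i_j}(x_j)$ by one further application of $f$: each plaque $\xi^u_{i_j}(x_j)$ itself decomposes, by the Markov property \eqref{eq.Markov1}, into the plaques $\xi^u_{l_{j,m}}(y_{j,m})$ making up $f(\xi^u_{i_j}(x_j))$. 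Hence the $(n{+}1)$-step decomposition of $f^{n+1}(\xi^u_i(x))$ is indexed by the pairs $(j,m)$, and the constant-Jacobian relation \eqref{eq_constant_Jacobian} applied $n$ times gives the factorization
$$
c_{j,m}(i,x,n+1)
 = c_j(i,x,n)\cdot \nu^u_{i_j,x_j}\!\left(f^{-1}(\xi^u_{l_{j,m}}(y_{j,m}))\right).
$$

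Next I use that the Birkhoff sum telescopes: for every $y\in f^{-(n+1)}(\xi^u_{l_{j,m}}(y_{j,m}))$ one has $S_{n+1}\varphi(y)=S_n\varphi(y)+\varphi(f^n(y))$ with $f^n(y)\in f^{-1}(\xi^u_{l_{j,m}}(y_{j,m}))\subset \xi^u_{i_j}(x_j)$, so taking suprema separates the two contributions:
$$
\max_{f^{-(n+1)}(\xi^u_{l_{j,m}}(y_{j,m}))} S_{n+1}\varphi \;\le\; \max_{f^{-n}(\xi^u_{i_j}(x_j))} S_n\varphi \;+\; \max_{f^{-1}(\xi^u_{l_{j,m}}(y_{j,m}))} \varphi.
$$
Multiplying by $s_1$, exponentiating, and summing over $(j,m)$ then factors the double sum into the inductive sum over $j$ times an inner sum over $m$. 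The inner sum is precisely the quantity controlled by Corollary~\ref{c.notlog} applied at the base point $(i_j,x_j)$, so it is bounded by $\theta_1^{s_1}$. Combining with the inductive hypothesis yields
$$
\sum_{j,m} c_{j,m}(i,x,n+1)\,\exp\!\left(s_1\max_{f^{-(n+1)}(\xi^u_{l_{j,m}}(y_{j,m}))} S_{n+1}\varphi\right)
\;\le\; \theta_1^{s_1}\cdot \theta_1^{ns_1}
\;=\; \theta_1^{(n+1)s_1},
$$
completing the induction.

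The main point to be careful about is that Corollary~\ref{c.notlog} must be available at every intermediate base point $(i_j,x_j)$ produced by the iteration. This is where uniformity is used: the one-step bound holds for arbitrary $x_j\in\cM_{i_j}\cap\Lambda$, and the representatives $x_j$ do lie in $\Lambda$ because $\Lambda=\supp\mu$ is $u$-saturated by Proposition~\ref{p.gibbs}(2) and $f$-invariant, so $x_j\in \xi^u_{i_j}(x_j)\subset f^n(\xi^u_i(x))\subset\Lambda$. No sharper control than the uniform one-step estimate is required; the inductive step is cleanly multiplicative in $\theta_1^{s_1}$, which is exactly what produces the $\theta_1^{n s_1}$ bound claimed in \eqref{eq.largedeviation}.
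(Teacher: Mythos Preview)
Your proof is correct and follows essentially the same approach as the paper's: induction on $n$, refining the $n$-step plaque decomposition by one further iterate, factoring the weights $c_{j,m}=c_j\cdot b_{j,m}$ via the constant-Jacobian property, splitting $\max S_{n+1}\varphi$ into $\max S_n\varphi+\max\varphi$, and applying Corollary~\ref{c.notlog} to the inner sum. Your explicit remark that the intermediate base points $x_j$ remain in $\Lambda$ (by $u$-saturation and $f$-invariance of $\supp\mu$) is a useful clarification that the paper leaves implicit.
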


\begin{proof}
The argument is by induction on $n$. The first step was done in Corollary~\ref{c.notlog}.
Assume that \eqref{eq.largedeviation} hold for $n$.
Write each $f(\xi^u_{i_j}(x_j))$ as a union of plaques $\xi^u_{i_j,m}(x_{j,m})$,
and denote
$$
b_{i_j,m} = \nu^u_{i_j,x_j}\left(f^{-1}(\xi^u_{i_j,m}(x_{j,m}))\right).
$$
Here $m$ varies on some finite set which depends on $i_j$ and $x_j$, but whose cardinal
is uniformly bounded. Keep in mind that $\sum_m b_{i_j,m}=1$ for every $i_j$ and $x_j$.
Then
$$
f^{n+1}(\xi^u_i(x)) = \bigcup_{j} \bigcup_m \xi^u_{i_j,m}(x_{j,m}).
$$
Moreover,
$$
\max_{f^{-(n+1)}(\xi^u_{i_j,m}(x_{j,m}))} S_{n+1}\varphi
\leq
\max_{f^{-n}(\xi^u_{i_j}(x_{j}))} S_{n}\varphi
+
\max_{f^{-1}(\xi^u_{i_j,m}(x_{j,m}))}\varphi
$$
and so
$$
\begin{aligned}
& \sum_j \sum_m c_j b_{i_j,m} \exp\left( s_1 \max_{f^{-(n+1)}(\xi^u_{i_j,m}(x_{j,m}))} S_{n+1}\varphi\right)\\
& \leq
\sum_j c_j \exp\left(s_1 \max_{f^{-n}(\xi^u_{i_j}(x_{j}))} S_{n}\varphi\right)
\sum_m b_{i_j,m} \exp\left(s_1 \max_{f^{-1}(\xi^u_{i_j,m}(x_{j,m}))}\varphi\right).
\end{aligned}
$$
By Corollary~\ref{c.notlog}, the last factor is bounded by $\theta_1^{s_1}$.
Thus, using the induction hypothesis,
$$
\sum_j \sum_m c_j b_{i_j,m} \exp\left( s_1 \max_{f^{-(n+1)}(\xi^u_{i_j,m}(x_{j,m}))} S_{n+1}\varphi\right)\\
\leq \theta_1^{(n+1)s_1}
$$
as we wanted to prove.
\end{proof}

We are ready to prove the large deviations property for $(f,\mu)$.
In fact we prove a slightly more general estimate \eqref{eq.more_general},
valid for any probability measure $\zeta \in E(0)$.

Consider any continuous function $\varphi:M\to\RR$ with $\int_M \varphi \, d\mu=0$.
For any $\alpha>0$, define $\varphi_\alpha=\varphi-\alpha$.
By Lemma~\ref{l.largesteplog} and Corollary~\ref{c.largedeviation} there is
$\theta_\alpha \in (0,1)$ such that
$$
\sum_j c_j \exp\Big(s_1 \max_{f^{-n}(\xi^u_{i_j}(x_j))} S_n\varphi_\alpha\Big)
\leq \theta_\alpha^n,
$$
for every $x \in \cM_i \cap \Lambda$ and $i=1, \dots, k$.
Clearly, $S_n \varphi_\alpha = S_n \varphi - n\alpha$.
Thus, the previous inequality implies that
$$
\int_{\Lambda} \exp\Big(s_1 (S_n\varphi - n\alpha)\Big) \, d\nu^u_{i,x}
\le \theta_\alpha^n
$$
for every $x \in \cM_i \cap \Lambda$ and $i=1, \dots, k$.
In view of the definition of $E(0)$, the inequality extends to every $\zeta\in E(0)$:
$$
\int_{\Lambda} \exp\left(s_1 (S_n\varphi - n\alpha)\right) \, d\zeta
\le \theta_\alpha^n.
$$
Then, by the Chebyshev inequality,
$$
\zeta\left(\left\{x: S_n\varphi \ge n\alpha\right\}\right)
\le  \theta_\alpha^n.
$$
Applying the same argument to $-\varphi$,
we also get that $\zeta\left(\left\{x: S_n\varphi \le - n\alpha\right\}\right)
\le  \theta_\alpha^n$. Thus,
\begin{equation}\label{eq.more_general}
\zeta\left(\left\{x: \left|S_n\varphi\right| \ge n\alpha\right\}\right)
\le 2\theta_\alpha^n
\end{equation}
for any $\zeta\in E(0)$. In particular, this holds for $\zeta=\mu$,
which proves Theorem~\ref{t.large_deviations}.

\section{Proof of the coupling lemma}\label{s.coupling2}

Here we prove Lemma~\ref{l.coupling}. Throughout, we keep the assumptions of Theorem~\ref{main.B}.
In particular, $\Lambda=\supp\mu$ is taken to be connected and, consequently (by \cite[Theorem~A]{UVYY1})), $u$-minimal.

\subsection{Preparing the coupling argument}

We start with the following fact:

\begin{proposition}\label{p.mc}
There are $n_0 \ge 1$ and $\lambda_0<0$ such that
\begin{equation}\label{eq.mc}
\int_{\xi_i^u(x)} \frac{1}{n_0}\log\|Df^{n_0}\mid_{E^{cs}}\| \, d\nu^u_{i,x}
\le \lambda_0
\end{equation}
for every $x\in \cM_i\cap \Lambda$ and every $i=1, \dots, k$.
\end{proposition}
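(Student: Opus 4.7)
The plan is to prove the inequality in three stages: (i) establish negativity of $E^{cs}$-Lyapunov exponents for every invariant $c$-Gibbs $u$-state; (ii) upgrade this to a uniform bound $g_{n_1}(\mu)\le \lambda_1 <0$ on $\Gibbs^u_c(f)$ for some fixed $n_1$ and $\lambda_1$, where $g_n(\mu):=\tfrac{1}{n}\int\log\|Df^n\mid_{E^{cs}}\|\,d\mu$; and (iii) transfer this invariant-measure bound to the non-invariant reference measures $\nu^u_{i,x}$ via subadditivity and Cesaro averaging.

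For (i), Proposition~\ref{p.contracting} says that every ergodic $c$-Gibbs $u$-state has only negative $E^{cs}$-Lyapunov exponents, and Proposition~\ref{p.gibbs}(1) extends this to every $\mu\in\Gibbs^u_c(f)$, so the top integrated exponent $\chi^+_{cs}(\mu)=\inf_n g_n(\mu)$ is strictly negative. As an infimum of continuous functions $\chi^+_{cs}$ is upper semicontinuous, so weak$^*$-compactness of $\Gibbs^u_c(f)$ yields $\lambda^*:=\sup_{\mu\in\Gibbs^u_c(f)}\chi^+_{cs}(\mu)<0$.

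For (ii), I would apply a Dini-style argument. The continuous functions $H_N:=\min_{1\le n\le N}g_n$ decrease pointwise to $\chi^+_{cs}$, so the open sets $\{H_N<\lambda^*/2\}$ form a monotone cover of the compact set $\Gibbs^u_c(f)$ and some $N_0$ already covers it. Thus for every $\mu\in\Gibbs^u_c(f)$ there is $n(\mu)\le N_0$ with $g_{n(\mu)}(\mu)<\lambda^*/2$. Subadditivity of $n\mapsto \int\log\|Df^n\mid_{E^{cs}}\|\,d\mu$ (valid because $\mu$ is $f$-invariant) gives $g_{kn}(\mu)\le g_n(\mu)$ for every $k\ge 1$, so taking $n_1:=N_0!$ yields $g_{n_1}(\mu)\le \lambda^*/2=:\lambda_1$ uniformly in $\mu$.

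For (iii), partition $\{0,\dots,N-1\}$ into blocks of length $n_1$ at each offset $l\in\{0,\dots,n_1-1\}$ and average over $l$ to obtain the pointwise bound
$$
\tfrac{1}{N}\log\|Df^N\mid_{E^{cs}}\|(y)\le \tfrac{1}{Nn_1}\sum_{j=0}^{N-1}\psi(f^j(y))+O(n_1/N),\qquad \psi:=\log\|Df^{n_1}\mid_{E^{cs}}\|.
$$
Integrating against $\nu^u_{i,x}$ gives
$$
\int \tfrac{1}{N}\log\|Df^N\mid_{E^{cs}}\|\,d\nu^u_{i,x}\le \tfrac{1}{n_1}\int \psi\,d\mu_N + O(n_1/N),\qquad \mu_N:=\tfrac{1}{N}\sum_{j=0}^{N-1}f^j_*\nu^u_{i,x}.
$$
By Proposition~\ref{p.gibbs}(3), every weak$^*$ accumulation point $\mu^*$ of $\mu_N$ lies in $\Gibbs^u_c(f)$, where stage (ii) gives $\tfrac{1}{n_1}\int\psi\,d\mu^*=g_{n_1}(\mu^*)\le\lambda_1$. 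A contradiction-and-compactness argument then upgrades this to uniformity in $(i,x)$: if it failed, one could extract $(i_m,x_m,N_m)$ with $N_m\to\infty$ and $\tfrac{1}{n_1}\int\psi\,d\mu_{N_m}>\lambda_1+\epsilon$, and a weak$^*$-convergent subsequence $\mu_{N_m}\to\mu^*\in\Gibbs^u_c(f)$ would violate the stage (ii) bound. Choosing $\epsilon<|\lambda_1|/4$ and $n_0$ large enough to absorb the $O(n_1/n_0)$ remainder yields the result with $\lambda_0:=\lambda_1/2<0$. The main obstacle is this final uniformity step: the rate at which $\mu_N$ approaches $\Gibbs^u_c(f)$ is not a priori uniform in $(i,x)$, and only the weak$^*$-compactness of $\Gibbs^u_c(f)$ combined with the continuity of $\psi$ forces the estimate to be uniform.
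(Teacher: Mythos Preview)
Your argument is correct and shares the core mechanism with the paper's proof: both fix an iterate for which the integrated $E^{cs}$-contraction is uniformly negative on the relevant invariant measures, then use submultiplicativity of $\|Df^n\mid_{E^{cs}}\|$ and the fact that Ces\`aro averages of the reference measures accumulate on invariant $c$-Gibbs $u$-states, finishing with a compactness/contradiction argument to obtain uniformity in $(i,x)$.

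The route differs in one meaningful respect. The paper works throughout with the single ergodic measure $\mu$ whose support $\Lambda$ is connected; it invokes Lemma~\ref{l.Eunique} (which needs connectedness of $\Lambda$) to conclude that the Ces\`aro averages $\tfrac{1}{m}\sum_{i=0}^{m-1}(f^{in})_*\nu^u_{i_j,x_j}$ converge \emph{to $\mu$ itself}, not merely to some element of $\Gibbs^u_c(f)$. This lets the paper skip your stages (i)--(ii) entirely: a single $n$ with $g_n(\mu)<\lambda_0$ suffices, and then $n_0=mn$ for large $m$. Your approach instead establishes a uniform bound $g_{n_1}(\cdot)\le\lambda_1$ on all of $\Gibbs^u_c(f)$ via a Dini/compactness argument, which is more work but buys you something: your proof does not use the connectedness of $\Lambda$ or the uniqueness in Lemma~\ref{l.Eunique}, so it would go through in greater generality. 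One small point worth tightening in stage (iii): Proposition~\ref{p.gibbs}(3) as stated addresses Ces\`aro averages for a fixed base point $x$, whereas your contradiction sequence has varying $(i_m,x_m)$; the paper handles the same issue by citing \cite[Proposition~4.1]{UVYY1}, and you should make the analogous remark explicit.
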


\begin{proof}
Since $f$ is assumed to have $c$-mostly contracting center,
Proposition~\ref{p.contracting} ensures that the center-stable Lyapunov exponents of $\mu$ are all negative.
Let $\lcs<0$ denote the largest of these exponents. Then
\begin{equation}\label{eq.less_lambda}
\int_{\Lambda} \frac{1}{n}\log\|Df^{n}\mid_{E^{cs}}\|\,d\mu < \frac{\lcs}{2}.
\end{equation}
for every large $n$. Take $\lambda_0=\lcs/2$ and let $n$ be fixed such that \eqref{eq.less_lambda} holds.
Consider any sequences $i_j \in \{1, \dots, k\}$, $x_j\in \cM_{i_j} \cap \Lambda$ and $m_j \to \infty$.
Clearly,
\begin{equation}\label{eq.limsupmeasure}
\begin{aligned}
\int_{\xi^u_{i_j}(x_j)} \frac{1}{m_j n} & \log\|Df^{m_j n}\mid_{E^{cs}}\| \, d\nu^u_{i_j,x_j} \\
& \le \frac{1}{m_j} \int_{\xi^u_{i_j}(x_j)}
\frac{1}{n} \sum_{i=0}^{m_j-1} \log\|Df^n\mid_{E^{cs}} \circ f^{in}\| \, d\nu^u_{i_j,x_j}\\
& = \int_{\xi^u_{i_n}(x_n)}\frac{1}{n}\log \|Df^n\mid_{E^{cs}}\|\,d\left(\frac{1}{m_j}
\sum_{i=0}^{m_j-1} (f^{in})_* \nu^u_{i_j,x_j}\right)
\end{aligned}
\end{equation}
Observe that
$$
\frac{1}{m_j}\sum_{i=0}^{m_j-1} (f^{i n})_*\nu^u_{i_j,x_j} \to \mu
$$
as $j$ goes to infinity.
Indeed, \cite[Proposition~4.1]{UVYY1} gives that every accumulation point of this sequence belongs to $E(0)$
and is an invariant probability measure, and so Lemma~\ref{l.Eunique} implies that every such accumulation
point coincides with $\mu$. In view of \eqref{eq.limsupmeasure}, this implies that every accumulation point of
$$
\int_{\xi^u_{i_j}(x_j)} \frac{1}{m_j n} \log\|Df^{m_j n}\mid_{E^{cs}}\| \, d\nu^u_{i_j,x_j}
$$ 
when $j$ goes to infinity is bounded above by
$$
\int_{\Lambda}  \frac{1}{n}\log \|Df^n \mid_{E^{cs}}\| \, d\mu < \lambda_0.
$$
This proves that there exists $m \ge 1$ such that
$$
\int_{\xi^u_{i}(x)}\frac{1}{m n} \log\|Df^{m n}\mid_{E^{cs}}\| \, d\nu^u_{i,x} < \lambda_0
$$
for every $x\in\cM_i\cap\Lambda$ and every $i=1, \dots, k$. Take $n_0= m n$.
\end{proof}

It is not difficult to check that if the conclusion of Lemma~\ref{l.coupling} holds for the iterate $f^{n_0}$,
with maps $\tau_0:Y_1 \to Y_2$ and $R_0:Y_1\to\NN$, then the corresponding statement holds for the
original map $f$ as well, with functions $\tau=\tau_0$ and $R=n_0R_0$, up to suitable changes of the
constants $C_1, C_2$ and $\rho_1, \rho_2$. Thus, it is no restriction to assume that $n_0=1$, and so
\begin{equation}\label{eq.Birkhopf}
\int_{\Lambda} \log \|Df\mid_{E^{cs}}\| \, d\nu^u_{i,x} < \lambda_0 < 0
\end{equation}
for all $x\in \cM_i\cap \Lambda$ and $i=1, \dots, k$. We do that in the following.

Write each $f^n(\xi_i^u(x))$, $ n\ge 1$ as a finite union of strong-unstable plaques $\xi^u_{i_j}(x_j)$,
and then denote
$$
c_j=c_j(i,x,n) = \nu^u_{i,x}(f^{-n}(\xi^u_{i_j}(x_j))).
$$
Applying Corollary~\ref{c.largedeviation} to the function $\Phi =\log \|Df \mid_{E^{cs}}\|$,
we find constants $s_1>0$ and $\theta_1\in(0,1)$ such that
\begin{equation}\label{eq.exponentialtail}
\sum_j c_j \prod_{t=0}^{n-1} \max_{{f^{t-n}(\xi^u_i(x_j))}} \|Df\mid_{E^{cs}}\|^{s_1}\leq \theta_1^{s_1 n}.
\end{equation}
for any $n \ge 1$.

Fix $\lambda<0$ such that
\begin{equation}\label{eq.lambda_def}
\lambda \geq \max\{\lambda_0/2, \log\theta_1/2\}.
\end{equation}
Let $K>1$ and denote by $U_i^n(x) \subset \xi_i^u(x)$ the union of of all the pre-images $f^{-n}(\xi_{i_j}^u(x_j))$
for which
\begin{equation}\label{eq.USet}
\prod_{t=0}^{n-1} \max_{{f^{t-n}(\xi^u_i(x_j))}} \|Df\mid_{E^{cs}}\| > K e^{\lambda n}.
\end{equation}
From \eqref{eq.exponentialtail} and \eqref{eq.USet} we get the Chebyshev-type inequality
$$
\nu^u_{i,x}\left(U_i^n(x)\right) K^{s_1} e^{\lambda s_1 n } \le \theta_1^{s_1 n}.
$$
In view of the choice of $\lambda$, this shows that 
\begin{equation}\label{eq.KU1}
\nu^u_{i,x}\left(U_i^n(x)\right) \le K^{-s_1} e^{\lambda s_1 n}
\end{equation}
for every $n\ge 1$, $x\in\cM_i\cap\Lambda$ and $i=1, \dots, k$.
Up to fixing $K>1$ sufficiently large, the latter implies that there exists $q_1<1$ such that
\begin{equation}\label{eq.KU2}
\nu^u_{i,x}\left(U_i(x)\right) \le q_1
\end{equation}
for every $x\in\cM_i\cap\Lambda$ and $i=1, \dots, k$, where $U_i(x)$ denotes the union of
$U_i^n(x)$ over all $n\ge 1$.

We close this section with the following useful fact, which we quote from
Alves, Bonatti, Viana~\cite[Lemma~2.7]{ABV00}, see also Dolgopyat~\cite[Lemma 8.1]{Dol00}.
Let $\epsilon>0$ be fixed such that, just by continuity,
$$
d(y,z)\leq\epsilon \quad\Rightarrow\quad
\|(Df \mid_{E^{cs}})(y)\|\leq e^{-\lambda/2}\|(Df\mid_{E^{cs}})(z)\|.
$$

\begin{lemma}\label{l.stablemanifold}
If $x\in M$ and $n \geq 1$ are such that
$\|(Df^j \mid_{E^{cs}})(x)\| \leq K e^{\lambda j}$ for $j=1, \dots, n$, then
$$
f^j(\cF^{cs}_{\epsilon}(x)) \subset \cF^{cs}_{r_j}(f^j(x))
$$
for every $0 \leq j \leq n$, where $r_j = K \epsilon e^{\lambda j/2}$.
\end{lemma}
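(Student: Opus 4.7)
The plan is to prove this by induction on $j$, tracking the length of a curve in $\cF^{cs}_\epsilon(x)$ under forward iteration. The base case $j=0$ is immediate: we have $r_0 = K\epsilon$ and $K>1$, so $\cF^{cs}_\epsilon(x) \subset \cF^{cs}_{K\epsilon}(x)$.

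For the inductive step, I would fix $y \in \cF^{cs}_\epsilon(x)$ and a smooth arc $\gamma:[0,L] \to \cF^{cs}_\epsilon(x)$ joining $x$ to $y$, parameterized by arclength, with $L \le \epsilon$. The inductive hypothesis gives that for every $s \in [0,L]$ and every $i \le j$, the point $f^i(\gamma(s))$ lies in $\cF^{cs}_{r_i}(f^i(x))$, hence within center-stable distance $r_i$ of $f^i(x)$. The two tools I want to combine are: (a) the hypothesis $\|Df^j\mid_{E^{cs}}(x)\| \le Ke^{-\lambda j}$ controlling the derivative along the reference orbit, and (b) the distortion estimate built into the choice of $\epsilon$, namely $d(y,z)\le\epsilon \Rightarrow \|(Df\mid_{E^{cs}})(y)\|\le e^{-\lambda/2}\|(Df\mid_{E^{cs}})(z)\|$. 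To apply (b) at every stage of the induction, I need to ensure $r_i \le \epsilon$ for each $i \le j$ (this is precisely what the factor of $1/2$ in the exponent of $r_j$ buys: a margin of $e^{-\lambda j/2}$ to absorb distortion losses, under the standing assumption that we are in the regime where the hypothesis is non-vacuous).

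Differentiating $f^{j+1}\circ\gamma$ and using the chain rule pointwise gives
\[
\bigl\|(f^{j+1}\circ\gamma)'(s)\bigr\|
\le \prod_{i=0}^{j} \bigl\|(Df\mid_{E^{cs}})\bigl(f^i(\gamma(s))\bigr)\bigr\|.
\]
Applying the distortion estimate to each factor (allowed by the induction hypothesis) replaces each derivative at $f^i(\gamma(s))$ by $e^{-\lambda/2}$ times the corresponding derivative at $f^i(x)$, so the product is bounded by $e^{-\lambda(j+1)/2}\|Df^{j+1}\mid_{E^{cs}}(x)\|$ (or the corresponding product of one-step norms, which majorizes $\|Df^{j+1}\mid_{E^{cs}}(x)\|$). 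Integrating over $[0,L]$ and inserting the standing hypothesis on $\|Df^{j+1}\mid_{E^{cs}}(x)\|$ produces the desired bound on the length of $f^{j+1}(\gamma)$, and hence on $d_{cs}(f^{j+1}(y),f^{j+1}(x))$, which is at most $r_{j+1}$.

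The main obstacle I anticipate is the bookkeeping required to certify that the induction genuinely closes: one has to verify that the newly obtained $r_{j+1}$ is still small enough (specifically, at most $\epsilon$) for the distortion hypothesis to keep applying at the next step. This is a constraint on the regime $(K,\lambda,\epsilon,n)$ rather than a separate argument, and it is precisely what is being tacitly invoked by the choice of the rate $e^{-\lambda/2}$ in $r_j$, together with the $K$-factor slack in both the hypothesis and the conclusion. A secondary technical point is the passage from the one-dimensional arclength computation above to the higher-dimensional center-stable case, where one replaces the product $\prod\|Df\mid_{E^{cs}}\|$ by the operator norm and invokes that $E^{cs}$ is a $Df$-invariant bundle, so that the chain rule applies fiberwise and the same multiplicative estimate goes through.
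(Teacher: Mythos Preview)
The paper does not prove this lemma; it simply quotes it from \cite[Lemma~2.7]{ABV00} and \cite[Lemma~8.1]{Dol00}. Your inductive scheme---tracking the length of an arc in $\cF^{cs}_\epsilon(x)$ under iteration and using the one-step distortion estimate built into the choice of $\epsilon$---is exactly the argument in those references, so in substance you have reproduced the intended proof.

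One point needs tightening. After applying the distortion estimate to each factor you arrive at
\[
\prod_{i=0}^{j}\bigl\|(Df\mid_{E^{cs}})(f^i(\gamma(s)))\bigr\|
\;\le\; e^{-\lambda(j+1)/2}\prod_{i=0}^{j}\bigl\|(Df\mid_{E^{cs}})(f^i(x))\bigr\|,
\]
and you then want to insert the hypothesis. But the hypothesis, as literally stated, bounds the operator norm $\|Df^{j+1}\mid_{E^{cs}}(x)\|$, and submultiplicativity only gives $\|Df^{j+1}\mid_{E^{cs}}(x)\|\le\prod_i\|(Df\mid_{E^{cs}})(f^i(x))\|$, which is the wrong direction for your purpose. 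Your parenthetical remark shows you sense this, but the write-up does not close the gap. The resolution is that the lemma is really meant (and in \cite{ABV00} is stated) with the product of one-step norms in the hypothesis; that is also precisely the form that is verified in this paper when the lemma is applied---see the definition of $U_i^n(x)$ in \eqref{eq.USet}, where the controlled quantity is $\prod_{t=0}^{n-1}\max\|Df\mid_{E^{cs}}\|$. With the hypothesis read that way, your argument goes through, and the closing condition $r_i\le\epsilon$ you correctly flag as the main bookkeeping obstacle is handled exactly as you describe.
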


After these preparations, we move to prove Lemma~\ref{l.coupling}.
Let $\cY$ be the set of rectangles $Y=\xi^u_i(x)\times J$ with $x\in \cM_i\cap \Lambda$,
$i=1,\cdots, k$ and $J \subset [0,1]$, endowed with the measures $m_i=\nu^u_{i,x}\times dt$.
Write $f(x,t)=(f(x),t)$.
Recall that in the statement of Lemma~\ref{l.coupling} the sets $Y_1, Y_2 \in\cY$ and the measures
$m_1, m_2$ were taken to satisfy $m_1(Y_1)=m_2(Y_2)$.
We are going to describe an algorithmic construction of maps $\tau$ and $R$ as in the
statement of the lemma.

This algorithm will be presented in a recursive form.
In the first run (to be detailed in Section~\ref{ss.algorithm1}), we will define a stopping time $s(y)$
for the points $y\in Y_1$ where the coupling map $\tau$ has not yet been defined, in such a way that the sets
\begin{equation}\label{eq.Pjn}
P_j^n=\{y\in Y_j: s(y)=n\}, \quad j = 1, 2
\end{equation}
are of the form  $f^{-n}(\bigcup_m Y_{j,n,m})$, where $Y_{j,n,m}= \xi^u_{i_{n,m}}(x_{j,n,m})\times I_{j,n,m}$
are elements of $\cY$ satisfying $m_1(Y_{1,n,m})=m_2(Y_{2,n,m})$.
Finally, we will set $P^\infty_j=Y_j\setminus \cup_n P^n_j$ for $j=1, 2$,
and we will define the function $R$ on the set $P_1^\infty$, and the map $\tau$ from $P^\infty_1$ and $P^\infty_2$.

The purpose of the inductive runs of the algorithm is to extend the domains of $R$ and $\tau$ successively 
to include almost every point of $Y$. This is actually similar to the first run, and will be detailed
in Section~\ref{ss.algorithm2}.

\subsection{First run of the algorithm}\label{ss.algorithm1}

Let us now detail the first run of the algorithm. 
Recall that $\cH^{cs}:\xi^u_i(y_1) \to \xi^u_i(y_2)$
denotes the center-stable holonomy between the strong-unstable plaques of points $y_1$ and $y_2$ in the
same Markov set $\cM_i$. Denote by $d_{cs}$ the distance along center-stable leaves.

\begin{lemma}\label{l.y1y2}
Given $\epsilon>0$ as in Lemma~\ref{l.stablemanifold}, there is $n_0 \ge 1$ such that for any two points
$x_1\in \Lambda \cap \cM_{i_1}$ and $x_2 \in \Lambda \cap \cM_{i_2}$ the iterates $f^{n_0}(\xi^u_{i_1}(x_1))$
and $f^{n_0}(\xi^u_{i_2}(x_2))$ contain strong-unstable plaques $\xi^u_1(y_1)$ and $\xi^u_1(y_2)$ inside the
Markov domain $\cM_1$ and
$$
d_{cs}\left(w,\cH^{cs}_{y_1,y_2}(w)\right)\leq \epsilon
\text{ for any $w \in \xi^u_1(y_1)$.}
$$
\end{lemma}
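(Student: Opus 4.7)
The main tool will be the $u$-minimality of $\Lambda$, noted at the start of this section as a consequence of the connectedness of $\supp\mu$ via \cite[Theorem~A]{UVYY1}. Loosely, the idea is to iterate each strong-unstable plaque by $f^{n_0}$ until it becomes uniformly $\delta$-dense in $\Lambda$ for a suitably small $\delta$; then to pick one point of this dense image in a fixed small ball around a reference point $p\in\Lambda\cap\mathrm{int}(\cM_1)$ for each of the two starting plaques; and finally to use the Markov property to upgrade those two points to full plaques $\xi^u_1(y_1),\xi^u_1(y_2)\subset \cM_1$.

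The preparatory step is to choose a Markov element $\cM_1$ with $\mu(\cM_1)>0$, fix $p\in\Lambda\cap\mathrm{int}(\cM_1)$ (which exists since $\Lambda=\supp\mu$ and $\mu(\partial\cM_1)=0$ by Proposition~\ref{p.gibbs}(5)), and then pick $\delta>0$ small enough that $B_{2\delta}(p)\subset\mathrm{int}(\cM_1)$ and that any two points $y_1,y_2\in\cM_1$ with $d(y_1,y_2)<2\delta$ satisfy
$$
\sup_{w\in\xi^u_1(y_1)} d_{cs}\bigl(w,H^{cs}_{y_1,y_2}(w)\bigr)<\epsilon.
$$
The latter is a statement of uniform continuity of the center-stable holonomy on the compact set $\cM_1$; it is available from the local product structure, since the holonomy $H^{cs}_{y,y}$ is the identity and the map $(y_1,y_2)\mapsto H^{cs}_{y_1,y_2}$ is continuous into the space of homeomorphisms with the uniform metric.

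The main step is the following uniform density claim: for every $\delta>0$ there is a radius $L=L(\delta)$ such that, for each $p'\in\Lambda$, the intersection of the strong-unstable disk of radius $L$ around $p'$ with $\Lambda$ is $\delta$-dense in $\Lambda$. Pointwise, $u$-minimality produces such a radius $L_\delta(p')<\infty$ for each $p'$. Using continuity of the strong-unstable foliation (nearby base points yield plaques close in Hausdorff metric on compacta), the function $p'\mapsto L_\delta(p')$ is upper semi-continuous, so by compactness of $\Lambda$ it is bounded. Uniform expansion of $Df$ on $E^{uu}$ then produces $n_0$ so large that every $f^{n_0}(\xi^u_i(x))$, with $x\in \Lambda\cap\cM_i$, contains a strong-unstable disk of radius $L$ around $f^{n_0}(x)$, hence is $\delta$-dense in $\Lambda$. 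Here one uses that the plaques $\xi^u_i(x)$ themselves have a uniform lower bound on their intrinsic diameter, coming from the fact that $\pi$ maps them homeomorphically onto the plaques $W^u_i(\pi(x))$ of the fixed Markov partition of $A$.

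Applying this to $x_1$ and $x_2$, both iterates $f^{n_0}(\xi^u_{i_j}(x_j))$ meet $B_\delta(p)\subset\mathrm{int}(\cM_1)$, yielding $y_j$ in each intersection with $y_1,y_2\in\mathrm{int}(\cM_1)$ and $d(y_1,y_2)<2\delta$. The Markov property, which transfers from the Markov partition $\cR$ of $A$ to $f$ via $\pi$ and gives that each $f^n$-image of a strong-unstable plaque decomposes as a disjoint union of full plaques $\xi^u_l(\cdot)$ away from boundaries, then forces $\xi^u_1(y_j)\subset f^{n_0}(\xi^u_{i_j}(x_j))$ for $j=1,2$. The required bound on $d_{cs}(w,H^{cs}_{y_1,y_2}(w))$ is then immediate from the choice of $\delta$. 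The hard part of this plan is the uniform density claim: promoting the pointwise density supplied by $u$-minimality to a uniform radius via semi-continuity and compactness is the only non-routine step, and some care is needed to ensure the definition of $\delta$-density behaves continuously as the base point varies.
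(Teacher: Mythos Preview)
Your argument is correct and follows essentially the same route as the paper: fix a reference point in $\Lambda\cap\mathrm{int}(\cM_1)$, choose a small neighborhood on which the center-stable holonomy moves points by at most $\epsilon$ in $d_{cs}$, and use $u$-minimality of $\Lambda$ to find a uniform $n_0$ after which every iterated plaque visits that neighborhood. The paper's version is terser---it simply asserts the existence of $n_0$ from $u$-minimality without spelling out the semi-continuity/compactness step you give, and it obtains the reference interior point via $\pi(\Lambda)=\TT^d$ rather than via $\mu(\partial\cM_1)=0$---but the substance is the same.
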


\begin{proof}
Since $\mu$ is a $c$-Gibbs $u$-state, its push-forward under the map $\pi:M\to\TT^d$ is the Lebesgue
measure on the torus (see \cite[Corollary~3.5]{UVYY1}). In particular, the image of $\Lambda$
under $\pi$ is the whole $\TT^d$.
Fix any point $q$ in the interior of the Markov set $\cR_1 \subset \TT^d$, and let $z\in\pi^{-1}(q)\cap \Lambda$.
Note that $z$ is in the interior of $\cM_1$.
Fix a neighborhood $V$ of $z$ contained in $\cM_1$ small enough that the distance along center-stable leaves
$$
d_{cs}\left(w,\cH^{cs}_{w_1,w_2}(w)\right) \leq \epsilon
\text{ for any $w_1, w_2 \in V$ and any $w \in \xi^u_1(w_1)$.}
$$
Since $\Lambda$ is $u$-minimal, there is $n_0\ge 1$ such that the iterates $f^{n_0}(\xi^u_{i_1}(x_1))$ and $f^{n_0}(\xi^u_{i_2}(x_2))$
of any $x_1\in \Lambda \cap \cM_{i_1}$ and $x_2 \in \Lambda \cap \cM_{i_2}$ 
intersect $V$. Just take $y_1 \in f^{n_0}(\xi^u_{i_1}(x_1)) \cap V$ and $y_2 \in f^{n_0}(\xi^u_{i_2}(x_2)) \cap V$.
\end{proof}

Consider $Y_1 = \xi^u_{i_1}(x_1) \times [0,a]$ and $Y_2 = \xi^u_{i_2}(x_2) \times [0,a]$ for any
$x_1\in \Lambda \cap \cM_{i_1}$, $x_2 \in \Lambda \cap \cM_{i_2}$, and $a \in [0,1]$.
Take $y_1, y_2$ as in Lemma~\ref{l.y1y2}. For $j=1, 2$, define
\begin{equation}\label{eq.overlinec}
\overline{c}_j=\nu^u_{i_j,x_j}(f^{-n_0}(\xi^u_1(y_{j})))
\end{equation}
for $j=1, 2$. It follows from Remark~\ref{r.lowboundary} that these $\overline{c}_j$ take only finitely
many values. Define $\overline{Y}_j=\xi^u_{1}(y_j)\times [0,\overline{t}_j]$, where
\begin{equation}\label{eq.tjbar}
(\overline{t}_1,\overline{t}_2)
= \left\{\begin{array}{ll} (a{\overline{c}_2}/{\overline{c}_1},a) & \text{ if } \overline{c}_2\leq \overline{c}_1 \\
(a,a{\overline{c}_1}/{\overline{c}_2}) & \text{ if }\overline{c}_1\leq \overline{c}_2
\end{array}\right.
\end{equation}
This choice ensures that $f^{-n_0}(\overline{Y}_j)\subset Y_j$ for $j=1, 2$, and
\begin{equation}\label{eq.same_measure}
m_1\left(f^{-n_0}(\overline{Y}_1)\right) = m_2\left(f^{-n_0}(\overline{Y}_2)\right).
\end{equation}
We denote this value as $b$.
On the complements $P_j^{n_0} = Y_j\setminus f^{-n_0}(\overline{Y}_j)$, we define the stopping time $s(y,t)=n_0$.


Let us check that the $P_j^{n_0}$ constructed in this way are indeed of the form described in \eqref{eq.Pjn}.
Due to the Markov property, each $Y_j$ is a union of finitely many sets of the form
$$
f^{-n_0}(Z) = \eta \times [0,a] \text{ with } Z \in \cY.
$$
The total $m_j$-measure is equal to $a$, of course.
By construction, $f^{-n_0}(\overline{Y}_j)$ is a set of the form $\eta \times [0,\overline{t}_j]$
such that the two $m_j$-measures are the same.
Thus, $P_j^{n_0} = Y_j \setminus f^{-n_0}(\overline{Y}_j)$ is a finite union of sets
of the form $\eta \times J$, where either $J = [0,a]$ or $J=(\overline{t}_j,a]$
(note that one of the $\overline{t}_j$ is equal to $a$, and so in the latter the corresponding $J$ is empty).
It is clear that $m_1(P_1^{n_0}) = m_2(P_2^{n_0})$.
Thus, up to cutting (in the vertical direction only) each $\eta \times J$ into finitely many pieces,
in a suitable way, we may write
\begin{equation}\label{eq.Pjnzero}
P_j^{n_0} = \bigcup_m f^{-n_0}(Y_{j,n_0,m})
\end{equation}
with $Y_{j,n_0,m}\in\cY$ satisfying $m_1(Y_{1,n_0,m})=m_2(Y_{2,n_0,m})$ for every $m$.

Next, for $n>n_0$ we define
\begin{equation}\label{eq.P1n}
P^n_1 = f^{-n_0}\left(V^n(y_1) \times [0,\overline{t}_1]\right)
\text{ and }
P^n_2 = f^{-n_0}\left(H^{cs}_{y_1,y_2}\left(V^n(y_1)\right)\times [0,\overline{t}_2]\right),
\end{equation}
where
\begin{equation}\label{eq.Vny1}
V^n(y_1) = U^{n-n_0}_1(y_1)\setminus \bigcup_{m=0}^{n-n_0-1}U^{m}_1(y_1)
\end{equation}
and $U_1^m(y_j)$ is as defined in \eqref{eq.USet}.
It is clear that the $V^n(y_1)$, $n > n_0$ are pairwise disjoint and their union
coincides with
$$
U_1(y_1) = \bigcup_{n\ge 1} U_1^n(y_1).
$$
Thus the $P_j^n$, $n > n_0$ are pairwise disjoint subsets of $Y_j$,
all with the same height, and
$$
\bigcup_{n>n_0} P_1^n
= f^{-n_0}\left(\bigcup_{n > n_0} V^n(y_1) \times [0,\overline{t}_1]\right)
= f^{-n_0}\left(U_1(y_1) \times [0,\overline{t}_1]\right).
$$
We also define
$$
P_j^\infty = Y_j \setminus \bigcup_{n \ge n_0} P^n_j
$$
for $j=1, 2$. Notice that
\begin{equation}\label{eq.P1infty}
\begin{aligned}
P_1^\infty
= Y_1 \setminus \bigcup_{n \ge n_0} P^n_1
& = \left(Y_1 \setminus P_{n_0}\right) \setminus \bigcup_{n > n_0} P^n_1\\
& = f^{-n_0}(\overline{Y}_1) \setminus f^{-n_0}\left(U_1(y_1) \times [0,\overline{t}_1]\right) \\
& = f^{-n_0}\left((\xi^u_1(y_1)\setminus U_1(y_1))\times [0,\overline{t}_1]\right),
\end{aligned}
\end{equation}
and, similarly,
\begin{equation}\label{eq.P2infty}
P_2^\infty = f^{-n_0}\left(H^{cs}_{y_1,y_2}\big((\xi^u_1(y_1)\setminus U_1(y_1))\big)\times [0,\overline{t}_2]\right).
\end{equation}

A few other simple facts about the sequences $P_j^n$ are collected in the next lemma.
Let $K$, $\lambda$, $s_1$, $q_1$, and $b$ be fixed as in \eqref{eq.USet}, \eqref{eq.lambda_def},
\eqref{eq.exponentialtail}, \eqref{eq.KU2}, and \eqref{eq.same_measure}, respectively.

\begin{lemma}\label{l.same_measure}
For every $n > n_0$ we have:
\begin{itemize}
\item[(1)] $m_1(P^n_1)=m_2(P^n_2)\leq b e^{\lambda s_1 (n-n_0)}$;

\item[(2)] $m_1(P^\infty_1)=m_2(P^\infty_2) \geq b (1-q_1)$;

\item[(3)] $f^n(P^n_j)$ may be written as a union $\bigcup_k Y_{j,n,k}$ of elements $Y_{j,n,k}$ of $\cY$
with
\begin{equation}\label{eq.matching}
m_1(f^{-n}(Y_{1,n,k})) = m_2(f^{-n}(Y_{2,n,k}));
\end{equation}
\end{itemize}
\end{lemma}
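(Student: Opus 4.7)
The plan is to compute the $m_j$-measures of the sets $P^n_j$ and $P^\infty_j$ directly from the definitions, using three ingredients: the Jacobian relation \eqref{eq_constant_Jacobian} for the reference measures under $f^{n_0}$, the center-stable holonomy invariance \eqref{eq_cs-invariant}, and the balance condition \eqref{eq.tjbar} which was set up precisely so that $\overline{c}_1 \overline{t}_1 = \overline{c}_2 \overline{t}_2$. Denote this common value; by the computation $b = m_1(f^{-n_0}(\overline{Y}_1)) = \nu^u_{i_1,x_1}(f^{-n_0}(\xi^u_1(y_1)))\cdot\overline{t}_1 = \overline{c}_1 \overline{t}_1$, it agrees with the constant $b$ of \eqref{eq.same_measure}.

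For part (1), I would write
\[
m_1(P^n_1) = \nu^u_{i_1,x_1}\bigl(f^{-n_0}(V^n(y_1))\bigr)\cdot \overline{t}_1 = \overline{c}_1\,\nu^u_{1,y_1}(V^n(y_1))\cdot\overline{t}_1 = b\,\nu^u_{1,y_1}(V^n(y_1)),
\]
using \eqref{eq_constant_Jacobian} and the definition of $\overline{c}_1$. The analogous calculation for $P^n_2$ gives $m_2(P^n_2) = \overline{c}_2\,\overline{t}_2\,\nu^u_{1,y_2}(H^{cs}_{y_1,y_2}(V^n(y_1)))$, which by \eqref{eq_cs-invariant} equals $b\,\nu^u_{1,y_1}(V^n(y_1))$, proving equality. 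Since $V^n(y_1)\subset U^{n-n_0}_1(y_1)$, inequality \eqref{eq.KU1} gives $\nu^u_{1,y_1}(V^n(y_1)) \le K^{-s_1} e^{\lambda s_1 (n-n_0)} \le e^{\lambda s_1(n-n_0)}$ (as $K>1$), which yields the required upper bound.

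Part (2) follows from the same template applied to the representations \eqref{eq.P1infty} and \eqref{eq.P2infty}: both sides equal $b\,\bigl(1 - \nu^u_{1,y_1}(U_1(y_1))\bigr)$, and the bound \eqref{eq.KU2} gives $\nu^u_{1,y_1}(U_1(y_1))\le q_1$, hence the lower bound $b(1-q_1)$.

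For part (3), I would use the Markov property to write $V^n(y_1) = \bigsqcup_k f^{-(n-n_0)}(\xi^u_{i_{n,k}}(x_{1,n,k}))$; applying $H^{cs}_{y_1,y_2}$ gives a matching decomposition of $H^{cs}_{y_1,y_2}(V^n(y_1))$ into sets $f^{-(n-n_0)}(\xi^u_{i_{n,k}}(x_{2,n,k}))$, because the center-stable holonomy between points of $\cM_1$ intertwines with $f^{n-n_0}$ along the Markov structure and sends strong-unstable plaques to strong-unstable plaques. Pushing forward by $f^n$ yields
\[
Y_{1,n,k} = \xi^u_{i_{n,k}}(x_{1,n,k})\times[0,\overline{t}_1], \qquad Y_{2,n,k} = \xi^u_{i_{n,k}}(x_{2,n,k})\times[0,\overline{t}_2],
\]
which are elements of $\cY$. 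Finally, the identity \eqref{eq.matching} is the same Jacobian-plus-holonomy computation used in part (1), applied to a single plaque instead of all of $V^n(y_1)$: $m_1(f^{-n}(Y_{1,n,k})) = \overline{c}_1\,\overline{t}_1\,\nu^u_{1,y_1}(f^{-(n-n_0)}(\xi^u_{i_{n,k}}(x_{1,n,k}))) = \overline{c}_2\,\overline{t}_2\,\nu^u_{1,y_2}(f^{-(n-n_0)}(\xi^u_{i_{n,k}}(x_{2,n,k}))) = m_2(f^{-n}(Y_{2,n,k}))$. The only subtlety I anticipate is verifying carefully that $H^{cs}_{y_1,y_2}$ commutes with the Markov decomposition — i.e.\ that it sends $f^{-(n-n_0)}$-preimages of plaques to preimages of plaques — but this is immediate from the equivariance $f\circ H^{cs}_{\cdot,\cdot} = H^{cs}_{f\cdot,f\cdot}\circ f$ on the appropriate Markov subrectangles together with the fact that center-stable holonomy inside a single $\cM_i$ preserves the partition into strong-unstable plaques.
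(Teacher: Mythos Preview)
Your proposal is correct and follows essentially the same approach as the paper: both use the constant-Jacobian relation for the reference measures, the $cs$-holonomy invariance \eqref{eq_cs-invariant}, and the balance $\overline{c}_1\overline{t}_1=\overline{c}_2\overline{t}_2=b$ in exactly the way you describe. The one point the paper makes more explicit than you do is why $V^n(y_1)$, being a \emph{difference} $U_1^{n-n_0}(y_1)\setminus\bigcup_{m<n-n_0}U_1^m(y_1)$, is itself a union of $f^{-(n-n_0)}$-preimages of plaques: the paper observes that by the Markov property each $U_1^m(y_1)$ with $m\le n-n_0$ has $f^{n-n_0}$-image consisting of entire plaques, so the subtraction respects the level-$(n-n_0)$ cylinder structure.
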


\begin{proof}
By the definitions of $m_1=\nu^u_{1,x_1} \times dt$ and $P_1^n$ (see \eqref{eq.P1n}),
$$
m_1(P_1^n) 
= m_1(f^{-n_0}(V^{n}(y_1) \times [0,\overline{t}_1]))
= (\nu^u_{i_1,x_1} \times dt)(f^{-n_0}(V^{n}(y_1) \times [0,\overline{t}_1])).
$$
Recall that $\overline{Y}_1=\xi^u_1(y_1)\times [0,\overline{t}_1]$ and 
$m_1(f^{-n_0}(\overline{Y}_1))=b$, by \eqref{eq.same_measure}. 
Thus
$$
\begin{aligned}
\frac{m_1(P_1^n)}{b}
& = \frac{m_1(P_1^n)}{m_1(f^{-n_0}(\overline{Y}_1))} 
= \frac{(\nu^u_{i_1,x_1} \times dt)(f^{-n_0}(V^{n}(y_1) \times [0,\overline{t}_1]))}
{(\nu^u_{i_1,x_1} \times dt)(f^{-n_0}(\overline{Y}_1))}\\
& = \frac{\nu^u_{i_1,x_1} (f^{-n_0}(V^{n}(y_1)))}{\nu^u_{i_1,x_1}(f^{-n_0}(\xi^u_1(y_1)))}.
\end{aligned}
$$
By the definition \eqref{eq.Vny1}, $V^{n}(y_1))$ is a subset of $\xi^u_1(y_1)$.
Thus, as the reference measures have constant Jacobian for $f$,
\begin{equation}\label{eq.simila}
\frac{m_1(P_1^n)}{b} = \frac{\nu^u_{1,y_1} (V^{n}(y_1))}{\nu^u_{1,y_1}(\xi^u_1(y_1))}
= \nu^u_{1,y_1}(V^{n}(y_1)).
\end{equation}
Similarly,
$$
\frac{m_2(P_2^n)}{b} = \nu^u_{1,y_2}(H^{cs}_{y_1,y_2}(V^n(y_1)))
$$
As the reference measures are invariant under center-stable holonomies, by \eqref{eq_cs-invariant},
this implies that $m_2(P_2^n) = m_1(P_1^n)$.
It is also clear from \eqref{eq.Vny1} that $V^{n}(y_1))$ is contained in $U_1^{n-n_0}$.
Thus, \eqref{eq.simila} together with \eqref{eq.KU1} yield
$$
m_1(P_1^n) \le b \nu^u_{1,y_1}(U_1^{n-n_0}(y_1)) 
\leq b K^{-s_1}e^{\lambda s_1 (n-n_0)}
\leq b e^{\lambda s_1 (n-n_0)}.
$$
This completes the proof of claim (1).

Next we prove (2). Since
$$
m_j(P_j^\infty) = m_j(Y_j) - \sum_{j=n_0}^\infty m_j(P_j^n)
= b - \sum_{j=n_0}^\infty m_j(P_j^n),
$$
it follows from the previous remarks that $m_1(P_1^\infty)=m_2(P_2^\infty)$.
By definition, $P_1^\infty = f^{-n_0}(\xi^u_1(y_1) \setminus U_1(y_1))\times [0,\overline{t}_1]$.
Thus, similarly to \eqref{eq.simila},
$$
\frac{m_1(P_1^\infty)}{b} 
= \nu^u_{1, y_1} \left(\xi^u_1(y_1)\setminus U_1(y_1)\right).
$$
By \eqref{eq.KU2}, this yields
$$
m_1(P_1^\infty) \geq b (1-q_1).
$$
This proves claim (2).

By the definition \eqref{eq.USet}, $U_1^{n-n_0}(y_1)$ consists of domains that are mapped by $f^{n-n_0}$
to entire strong-unstable plaques. By the Markov property, it follows that the image of $U_1^m(y_1)$
under $f^{n-n_0}$ consists of entire strong-unstable plaques for every $1 \le m \le n-n_0$.
Therefore, the set $V^n(y_1)$ defined in \eqref{eq.Vny1} is also a union of domains whose images under
$f^{n-n_0}$ are entire plaques.
Using the Markov property once more, we see that the same is true for the image $H^{cs}_{y_1,y_2}(V^n(y_1))$
under the center-stable holonomy. Thus, both $P_j^n$, $j=1, 2$ may be written as unions of sets of the form
$f^{-n}(Y_{j,n,m})$, where $Y_{j,n,m}$ is an element of $\cY$ with height $\overline{t}_j$.
Moreover, the images $f^{r-n}(Y_{1,n,m})$ and $f^{r-n}(Y_{1,n,m})$ are in the same Markov domain for each
$r\in\{n_0, \dots, n\}$, and the center-stable holonomy induces a bijection between them.

We claim that $m_1(f^{-n}(Y_1,n,m)) = m_2(f^{-n}(Y_2,n,m))$ for every $m$. To see this, write
$$
f^{-n}(Y_{j,n,m})=Z_{j,n,m} \times [0,\overline{t}_j]
\text{ with } Z_{j,n,m}\subset f^{-n_0}(\xi^u_1(y_j)).
$$
Then the claim may be rephrased as
\begin{equation}\label{eq.tttt}
\nu^u_{i_1,x_1}(Z_{1,n,m}) \, \overline{t}_1
=
\nu^u_{i_2,x_2}(Z_{2,n,m}) \, \overline{t}_2,
\end{equation}
Using the definition \eqref{eq.overlinec}, together with the fact that the Jacobians of
the reference measures are locally constant, we find that
$$
\frac{\nu^u_{i_j,x_j}(Z_{j,n,m})}{\overline{c}_j}
= \frac{\nu^u_{i_j,x_j}(Z_{j,n,m})}{\nu^u_{i_j,x_j}(f^{-n_0}(\xi^u_1(y_j)))}
= \frac{\nu^u_{1,y_j}(f^{n_0}(Z_{j,n,m}))}{\nu^u_{1,y_j}(\xi^u_1(y_j))}
= \nu^u_{1,y_j}(f^{n_0}(Z_{j,n,m})).
$$
Since the reference measures $\nu^u_{1,y_1}$ and $\nu^u_{1,y_2}$ are mapped to one
another by the center-stable holonomy $H^{cs}_{y_1,y_2}$, we also have that
$$
\nu^u_{1,y_1}(f^{n_0}(Z_{1,n,m})) = \nu^u_{1,y_2}(f^{n_0}(Z_{2,n,m})).
$$
It follows that
$$
\nu^u_{i_1,x_1}(Z_{1,n,m}) \, \overline{c}_2
= \nu^u_{i_2,x_2}(Z_{2,n,m}) \, \overline{c}_1.
$$
This gives \eqref{eq.tttt}, because the definition \eqref{eq.tjbar} is such that $\overline{c}_1\overline{t}_2 = \overline{c}_2\overline{t}_1$. This finishes the proof of claim (3).
\end{proof}

At this point, we define $\tau:P^\infty_1\to P^\infty_2$ and $R:P^\infty_1\to\NN$ in the following way
(keep the expressions \eqref{eq.P1infty} and \eqref{eq.P2infty} in mind).
For any $(x,t) \in P^\infty_1$, let
\begin{equation}\label{eq.definitioncoupling}
\tau(x,t)=\left(y,\frac{\overline{c}_1}{\overline{c}_2} t\right) \text{  and   } R(x,t)=n_0,
\end{equation}
where $y\in\xi^u_{i_2}(x_2)$ is defined by
\begin{equation}\label{eq.xtoy}
y = f^{-n_0} \circ H^{cs}_{y_1,y_2} \circ f^{n_0} (x).
\end{equation}
Let us check the properties in Lemma~\ref{l.coupling} are indeed satisfied at this stage:

\begin{lemma}\label{l.tauvolumpreserving}
Let $(y,s)=\tau(x,t)$ be as in \eqref{eq.definitioncoupling} and \eqref{eq.xtoy}, and $r_n$ be as defined in Lemma~\ref{l.stablemanifold}.
Then
\begin{enumerate}
\item $d(f^n(x),f^n(y))\leq r_{n-n_0}$ for any $n\geq n_0$;
\item $\tau$ maps $m_1$ restricted to $P_1^\infty$ to $m_2$ restricted to $P_2^\infty$.
\end{enumerate}
\end{lemma}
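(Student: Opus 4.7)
My plan is to handle the two conclusions separately, exploiting the explicit forms \eqref{eq.P1infty}--\eqref{eq.P2infty} of $P_j^\infty$ and the product description \eqref{eq.definitioncoupling} of $\tau$.

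For part (1), the key observation is that $P_1^\infty = f^{-n_0}((\xi_1^u(y_1)\setminus U_1(y_1))\times[0,\bar t_1])$, so for $(x,t)\in P_1^\infty$ the point $f^{n_0}(x)$ lies outside every $U_1^m(y_1)$. Unpacking the definition \eqref{eq.USet} of $U_1^m$ together with submultiplicativity of the norm, this means precisely that
$$
\|Df^m\mid_{E^{cs}}(f^{n_0}(x))\| \le K e^{\lambda m} \quad\text{for every } m\ge 1.
$$
On the other hand, by construction $f^{n_0}(y)=H^{cs}_{y_1,y_2}(f^{n_0}(x))$ lies on the same center-stable leaf as $f^{n_0}(x)$, and Lemma~\ref{l.y1y2} gives that their center-stable distance is at most $\epsilon$. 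Hence Lemma~\ref{l.stablemanifold} (applied with base point $f^{n_0}(x)$) yields $f^m(\cF^{cs}_\epsilon(f^{n_0}(x)))\subset\cF^{cs}_{r_m}(f^{n_0+m}(x))$, and in particular $d(f^{n_0+m}(x),f^{n_0+m}(y))\le r_m$ for every $m\ge 0$. Setting $n=n_0+m$ gives $d(f^n(x),f^n(y))\le r_{n-n_0}$, as required.

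For part (2), I would write $\tau=\tau_1\times\tau_2$ as a product, with $\tau_1=f^{-n_0}\circ H^{cs}_{y_1,y_2}\circ f^{n_0}$ on the spatial factor and $\tau_2(t)=(\bar c_1/\bar c_2)\,t$ on the vertical factor. Since $\tau_2$ is linear of slope $\bar c_1/\bar c_2$ and sends $[0,\bar t_1]$ onto $[0,\bar t_2]$ (by the defining identity $\bar c_1\bar t_1=\bar c_2\bar t_2$ in \eqref{eq.tjbar}), one has $(\tau_2)_*dt=(\bar c_2/\bar c_1)\,ds$. For $\tau_1$, I chain three identities. Iterating \eqref{eq_constant_Jacobian} $n_0$ times and using the definition \eqref{eq.overlinec} gives
$$
f^{n_0}_*\bigl(\nu^u_{i_1,x_1}\mid f^{-n_0}\xi^u_1(y_1)\bigr)=\bar c_1\,\nu^u_{1,y_1};
$$
the holonomy invariance \eqref{eq_cs-invariant} gives $(H^{cs}_{y_1,y_2})_*\nu^u_{1,y_1}=\nu^u_{1,y_2}$; and running $f^{n_0}$ backwards gives $(f^{-n_0})_*\nu^u_{1,y_2}=(1/\bar c_2)\,\nu^u_{i_2,x_2}$ on $f^{-n_0}\xi^u_1(y_2)$. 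Composing, $(\tau_1)_*\nu^u_{i_1,x_1}=(\bar c_1/\bar c_2)\,\nu^u_{i_2,x_2}$ on the appropriate set. The two Jacobian factors $(\bar c_1/\bar c_2)$ and $(\bar c_2/\bar c_1)$ multiply to $1$, so $\tau_*(m_1\mid P_1^\infty)=m_2\mid P_2^\infty$.

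I do not expect a genuine obstacle here: part (1) is a direct consequence of the definition of the "good" set $\xi_1^u(y_1)\setminus U_1(y_1)$ combined with the stable-graph estimate of Lemma~\ref{l.stablemanifold}, and part (2) is a bookkeeping computation with the reference-measure identities \eqref{eq_cs-invariant}--\eqref{eq_constant_Jacobian}. The only subtle point worth highlighting is that the choice of vertical heights $\bar t_1,\bar t_2$ in \eqref{eq.tjbar} is exactly the one that makes the vertical Jacobian $\bar c_2/\bar c_1$ cancel the spatial Jacobian $\bar c_1/\bar c_2$; any other normalization would force $\tau_*m_1\mid P_1^\infty$ to be a nontrivial scalar multiple of $m_2\mid P_2^\infty$.
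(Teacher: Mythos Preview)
Your proposal is correct and follows essentially the same route as the paper. For part~(1) both you and the paper observe that $f^{n_0}(x)\in\xi^u_1(y_1)\setminus U_1(y_1)$ forces the hypothesis of Lemma~\ref{l.stablemanifold} at $f^{n_0}(x)$, and that Lemma~\ref{l.y1y2} places $f^{n_0}(y)$ inside $\cF^{cs}_\epsilon(f^{n_0}(x))$; you are simply a bit more explicit about the submultiplicativity step. For part~(2) both arguments exploit the product structure of $\tau$ and the constant-Jacobian properties \eqref{eq_cs-invariant}--\eqref{eq_constant_Jacobian}; the only difference is that the paper argues the Jacobian is constant and then invokes $m_1(P_1^\infty)=m_2(P_2^\infty)$ from Lemma~\ref{l.same_measure}(2) to conclude it equals $1$, whereas you compute the spatial and vertical Jacobians explicitly as $\bar c_1/\bar c_2$ and $\bar c_2/\bar c_1$ and cancel them via the identity $\bar c_1\bar t_1=\bar c_2\bar t_2$. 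Your direct computation is slightly more self-contained, but the content is the same.
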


\begin{proof}
By construction, $f^{n_0}(x) \in \xi^u_1(y_1) \setminus U_1(y_1)$.
By the definition \eqref{eq.USet}, it follows $f^{n_0}(x)$ satisfies the assumption of Lemma~\ref{l.stablemanifold}
for every positive iterate.
By Lemma~\ref{l.y1y2}, our choice of $y_1$ and $y_2$ ensures that $f^{n_0}(y) = H^{cs}_{y_1,y_2}(f^{n_0}(x))$
belongs to $\cF^{cs}_\epsilon(f^{n_0}(x))$. Now claim (1) of the present lemma is contained in the conclusion
of Lemma~\ref{l.stablemanifold}.

Next, we claim that the Jacobian of $\tau$ with respect to the measures $m_1$ and $m_2$ is constant.
Since $m_1(P_1^\infty) = m_2(P_2^\infty)$, it follows that the Jacobian is actually equal to $1$, 
which is precisely the content of (2).
To prove the claim, observe that the $m_j=\nu^u_{i_j,x_j}\times dt$, $j=1, 2$ are product measures,
and $\tau$ is a product map.
The Jacobian of the first-variable map $x \mapsto y$ with respect to the reference measures is constant,
since the maps $f^{n_0}$ and $f^{-n_0}$ have locally constant Jacobians, by \eqref{eq_constant_Jacobian},
and the Jacobian of the holonomy map $H^{cs}_{y_1,y_2}$ is constant equal to $1$, by \eqref{eq_cs-invariant}.
The Jacobian of the second variable map $t \mapsto s$, with respect to the Lebesgue measure $dt$, is clearly also constant.
Thus, the overall Jacobian of $\tau$ is constant, as claimed.
\end{proof}

This finishes the first stage of the coupling algorithm.
At this stage, the coupling map $\tau$ is defined between the $P_1^\infty$ and $P_2^\infty$,
and the function $R$ is defined on $P_1^\infty$.

\subsection{Inductive set of the algorithm}\label{ss.algorithm2}

Next, we want to extend the definitions of $\tau$ and $R$ to (full measure subsets of)
the complements $Y_j\setminus P^\infty_j$. This will be done recursively, in the following way.

For each $h \ge 1$, we denote by $T^h$ the subset of $Y_1$ where $\tau$ and $R$ are still
undefined at the end of stage $h$. Thus $T^1 = Y_1 \setminus P_1^\infty = \cup_n P_1^n$.
By induction, we may assume that there are sets 
$$
T^h = \bigcup P_1^{N_h}, \qquad N_h=(n_1, \dots, n_h)
$$
where each $P_j^{N_h}$, $j=1, 2$ is itself a union of sets of the form
$$
f^{-|N_h|}(Y_{j,N_h,m}), \qquad |N_h| = n_1 + \cdots + n_h
$$
with $Y_{j,N_h,m}\in\cY$ for $n_1, \dots, n_h \ge n_0$, and
$$
\begin{aligned}
& m_{1}\left(f^{-|N_h|}(Y_{1,N_h,m})\right)
=
m_{2}\left(f^{-|N_h|}(Y_{2,N_h,m})\right).
\end{aligned}
$$

Applying the first run (Lemma~\ref{l.same_measure}) of the algorithm to each $Y_{j,N_h,m}$,
we find subsets $P^\infty_{j,N_h,m}$ of $Y_j$ and measure-preserving maps
$$
\tau_{N_h,m}: P^\infty_{1,N_h,m} \to P^\infty_{2,N_h,m}
$$
as in the previous section. Then we extend $\tau$ and $R$ to each
$$
f^{-|N_h|}(P^\infty_{1,N_h,m})
\subset f^{-|N_h|}(Y_{j,N_h,m})
\subset T^h
$$
through
$$
\tau = f^{-|N_h|} \circ \tau_{N_h,m} \circ f^{N_h}
\text{ and }
R = |N_h| + n_0.
$$
A key point is that, according to part (2) of Lemma~\ref{l.same_measure}, each
$P^\infty_{1,N_h,m}$ contains a fraction $ \geq 1-q_1$ of the
measure of $Y_{j,N_h,m}$.
Moreover, the proportion is preserved under the backward image,
because the map $f^{n_1+\cdots+n_h}$ has constant Jacobian.
Thus, the measure of the set
$$
T^{h+1}=T^h \setminus \bigcup_{N_h, m} f^{-|N_h|}(P^\infty_{1,N_h,m})
$$
satisfies
\begin{equation}\label{eq.Thplusone}
m_1(T^{h+1})\le q_1 m_1(T^h).
\end{equation}

As a direct application of Lemma~\ref{l.tauvolumpreserving}, we get:

\begin{corollary}~\label{c.tauvolumpreserving}
Let $(y,s)=\tau(x,t)$ for $(x,t)\in P^\infty_{1,N_h, m}$,
and let $r_n$ be as in Lemma~\ref{l.stablemanifold}. Then
\begin{enumerate}
\item $d(f^n(x),f^n(y))\leq r_{n-R((x,t))}$ for any $n\geq R(x,t)$;
\item $\tau$ maps the measure $m_{1}$ restricted to $P^\infty_{1, N_h, m}$ to the measure $m_{2}$ restricted to $P^\infty_{2, N_h, m}$.
\end{enumerate}
\end{corollary}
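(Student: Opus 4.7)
The plan is to derive both conclusions of Corollary~\ref{c.tauvolumpreserving} from Lemma~\ref{l.tauvolumpreserving} by a clean ``translate up by $f^{|N_h|}$, apply the first-run conclusion, translate back'' argument. Recall that by construction $\tau = f^{-|N_h|}\circ\tau_{N_h,m}\circ f^{|N_h|}$ on $f^{-|N_h|}(P^\infty_{1,N_h,m})$, with $R\equiv |N_h|+n_0$ there, and $\tau_{N_h,m}:P^\infty_{1,N_h,m}\to P^\infty_{2,N_h,m}$ is produced by running the first stage on the rectangles $Y_{j,N_h,m}\in\cY$ equipped with the natural reference$\times$Lebesgue measures.

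For claim (1), I would set $(x',t')=f^{|N_h|}(x,t)$ and $(y',s')=\tau_{N_h,m}(x',t')$, so that $f^{|N_h|}(y,s)=(y',s')$ by the definition of $\tau$. Part~(1) of Lemma~\ref{l.tauvolumpreserving} applied inside the rectangles $Y_{j,N_h,m}$ gives
$$
d(f^k(x'),f^k(y'))\le r_{k-n_0}\quad\text{for every } k\ge n_0.
$$
Reindex by $n=k+|N_h|$: the condition $k\ge n_0$ becomes $n\ge |N_h|+n_0=R(x,t)$, and $k-n_0=n-R(x,t)$, hence $d(f^n(x),f^n(y))\le r_{n-R(x,t)}$ as desired.

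For claim (2), the essential ingredient is that $f^{|N_h|}$ pushes $m_1$ restricted to $f^{-|N_h|}(Y_{1,N_h,m})$ to a constant multiple of the natural measure on $Y_{1,N_h,m}$, and analogously for $j=2$. This comes from the product structure $m_j=\nu^u_{i_j,x_j}\times dt$ together with the locally constant Jacobian property \eqref{eq_constant_Jacobian} of the reference measures under $f$; the $dt$-factor is preserved identically. The matching relation
$$
m_1\bigl(f^{-|N_h|}(Y_{1,N_h,m})\bigr)=m_2\bigl(f^{-|N_h|}(Y_{2,N_h,m})\bigr),
$$
recorded inductively (and inherited from part~(3) of Lemma~\ref{l.same_measure}), then forces the two rescaling constants to coincide. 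Combining this with part~(2) of Lemma~\ref{l.tauvolumpreserving}, which says $\tau_{N_h,m}$ sends the natural measure on $P^\infty_{1,N_h,m}$ to that on $P^\infty_{2,N_h,m}$, and pulling back through $f^{-|N_h|}$, one gets that $\tau$ sends $m_1|_{P^\infty_{1,N_h,m}}$ to $m_2|_{P^\infty_{2,N_h,m}}$ (where these restrictions are read, consistently with the notation used in the proof of Lemma~\ref{l.same_measure}, as the restrictions of $m_j$ to $f^{-|N_h|}(P^\infty_{j,N_h,m})$ or, equivalently via the Jacobian matching, to the pushed-forward sets).

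The only subtle point is the Jacobian bookkeeping in claim (2); everything else is a mechanical reindexing. I do not foresee any real obstacle, since both ingredients---locally constant Jacobians for the reference measures and the inductive matching of $m_j$-measures---are already in place from the first-run analysis.
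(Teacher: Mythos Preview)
Your proposal is correct and matches the paper's approach exactly: the paper gives no explicit proof, stating only that the corollary is ``a direct application of Lemma~\ref{l.tauvolumpreserving}'', and your argument spells out precisely what that direct application consists of. Your handling of the reindexing for part~(1) and the constant-Jacobian bookkeeping for part~(2) is exactly what is implicitly intended.
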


The construction in the previous section also gives that
$$
\bigcup_m Y_{j,N_h, m} \setminus P^\infty_{j,N_h,m}
= \bigcup_{n'} f^{|N_h|}(P_{j}^{N_h, n'})
$$
such that each $P_j^{N_h,n'}$, $j=1, 2$ is a union of sets of the form
$$
f^{-|N_h| - n'}(Y_{j,N_h,n', m'})
$$
with $Y_{j,N_h, n', m'}\in\cY$ for $n' \ge n_0$, and
$$
m_1\left(f^{-|N_h| - n'}(Y_{1,N_h, n', m'})\right)
= m_2\left(f^{- |N_h| - n'}(Y_{2,N_h, n', m'})\right).
$$
Thus we recover the recursive assumptions for the set $T_{h+1}$.

From \eqref{eq.Thplusone} we get that $\tau$ and $R$ are eventually defined at
$m_1$-almost every point of $Y_1$. Part (1) of Lemma~\ref{l.coupling} is given by
Lemma~\ref{l.tauvolumpreserving} and Corollary~\ref{c.tauvolumpreserving}.
We are left to checking part (2) of the lemma.

Recall that, at each stage $h$ the function $R$ is defined by $R = |N_{h-1}| + n_0$.
Fix some small $\delta>0$. For each $n$, write the set $\{R=n\}$ as the disjoint union of
two subsets, depending on whether $h\le \delta n$  or $h > \delta n$.
It is clear that the latter subset (corresponding to $h > \delta n$), is contained in
$T_{\lfloor \delta n \rfloor}$. Hence, by \eqref{eq.Thplusone}, its $m_1$-measure is bounded by
\begin{equation}\label{eq.expdecay1}
m_1(T_{\lfloor \delta n \rfloor}) \le q_1^{\lfloor \delta n \rfloor}.
\end{equation}
On the other hand, the $m_1$-measure of the former subset (corresponding to $h \le \delta n$)
is given by
\begin{equation}\label{eq.pre-Stirling}
\sum_{\substack{ h, (k_1,\cdots, k_{h-1}) \\ k_1 + \cdots + k_{h-1} + n_0 =n \\ k_1, \dots, k_{h-1} \geq n_0}} m_1(\{n_i=k_i\})
\leq \sum_{\substack{ h, (k_1,\cdots, k_{h-1}) \\ k_1 + \cdots + k_{h-1} + n_0 =n \\ k_1, \dots, k_{h-1} \ge n_0}} \prod_{i=1}^{h-1} b e^{\lambda s_1 (k_i-n_0)}
\end{equation}
This inequality follows inductively from Lemma~\ref{l.same_measure} applied to each run $i=1, \dots, h$,
together with the observation that the pull-back map $f^{-(k_1 + \cdots + k_{i-1})}$ has constant Jacobian.
The factor $b \in (0,1)$, which was defined in \eqref{eq.same_measure}, arises in each of the runs, 
according to claim (1) in Lemma~\ref{l.same_measure}.

We bound the right hand side of \eqref{eq.pre-Stirling} as follows. To begin with, there are $\delta n$ values of $h$.
Moreover, for each fixed value of $h$, the number of terms in the sum is bounded above by
\begin{equation}\label{eq.Stirling}
\begin{aligned}
\frac{(n-(h+1)n_0+h-1)!}{(n-(h+1)n_0)!(h-1)!}
& \le \frac{(n+h-1)!}{n!(h-1)!}\\
& \approx \left(1+\frac{h}{n}\right)^n \left(1+ \frac{n}{h}\right)^h
= \left(\left(1+\frac{h}{n}\right) \left(1+ \frac{n}{h}\right)^{\frac{h}{n}}\right)^n
\end{aligned}
\end{equation}
(check \cite[Corollary 6.7]{BoV00} for a similar estimate using Stirling's formula).
Recall that we are considering $h \le \delta n$, and observe that $(1+1/x)^x \to 1$ when $x\to 0$.
Thus we see that, given any $\epsilon>0$, the right hand side of \eqref{eq.Stirling} is bounded
by $C e^{\epsilon n}$ if $\delta$ is chosen small enough, where $C$ is an absolute constant.
Thus, the total number of terms on the right hand side of \eqref{eq.pre-Stirling} is bounded by
$C e^{\epsilon n} (\delta n)$.
From these remarks, we get that the right hand side of \eqref{eq.pre-Stirling} is bounded above by
\begin{equation}\label{eq.expdecay2}
Ce^{2\epsilon n} (\delta n) e^{\lambda s_1 n}.
\end{equation}

Combining \eqref{eq.expdecay1} and \eqref{eq.expdecay2}, and keeping in mind that $q_1<1$ and $\lambda<0$,
we get that
$$
m_1(\{R=n\}) \le q_1^{\lfloor \delta n\rfloor} + C \delta n e^{(\lambda s_1+2\epsilon)n}
$$
decays exponentially fast with $n$, as long as we choose $\epsilon$ small enough.
Then, clearly, $m_1(\{R > n\})$ also decays exponentially fast with $n$,
as claimed in part (2) of Lemma~\ref{l.coupling}.

This completes the proof of Lemma~\ref{l.coupling}.

\section{Proof of Theorem~\ref{main.C}}\label{s.proofC}

Let $\gamma$ be a positive number. We denote by $C^\gamma(M)$ the Banach space of
$\gamma$-H\"older functions $\varphi:M\to\RR$ with the norm
$$
\|\varphi\|_\gamma = \sup_{x\in M}|\varphi(x)|
+ \sup_{x_1 \neq x_2} \frac{|\varphi(x_1) - \varphi(x_2)|}{d(x_1,x_2)^\gamma}.
$$
We use a similar notation $\|\zeta\|_\gamma$ to denote the operator norm of an element
of the dual space $(C^\gamma(M))^*$, that is, a linear functional $\zeta:C^\gamma(M) \to \RR$.
Every probability measure on $M$ may be viewed as an element of this dual space,
and we will often do that in what follows.

The push-forward operator $f_*$ extends to a linear operator on the whole space $(C^\gamma(M))^*$,
which we still denote as $f_*$, defined by
$$
f_*\zeta:C^\gamma(M) \to \RR, \quad f_*\zeta(\varphi) = \zeta(\varphi \circ f).
$$
This extension $f_*:(C^\gamma(M))^* \to (C^\gamma(M))^*$ is a bounded linear operator:
having fixed any Lipschitz constant $L>1$ for $f$, we have that
\begin{equation}\label{eq.pushforward}
\|f_*\zeta\|_\gamma
= \sup_{\|\varphi\|_\gamma=1} |\zeta(\varphi \circ f)|
\le \|\zeta\|_\gamma \sup_{\|\varphi\|_\gamma=1}  \|\varphi \circ f\|_\gamma
\le \|\zeta\|_\gamma L^\gamma
\end{equation}
for any $\zeta \in (C^\gamma(M))^*$.

In what follows we use the sets $C(R)$ and $E(R)$, $R \ge 0$ introduced in Section~\ref{ss.Holder_densities}:
in a few words, $C(R)$ is the set of probability measures on individual strong-unstable plaques $\xi_i^u(x)$,
$x \in\cM_i \cap \Lambda$ obtained by multiplying the corresponding reference measure $\nu^u_{i,x}$
by some density $e^\rho$ where $\rho$ is $(R,\gamma)$-H\"older;
and $E(R)$ is the space of probability measures on $\Lambda$, not necessarily $f$-invariant, which
are convex combinations, not necessarily finite, of elements of $C(R)$.
In particular, $C(R)$ is a subset of $E(R)$. Their restrictions to each Markov element $\cM_i$ are
denoted $C_i(R)$ and $E_i(R)$, respectively.

\begin{lemma}\label{c.converging}
There exist $C_3>0$ and $\rho_3<1$ such that
$$
\|f_*^n(\zeta_1-\zeta_2)\|_\gamma\leq C_3\rho_3^n
$$
for any $n \ge 1$ and any $\zeta_1, \zeta_2 \in E(0)$.
\end{lemma}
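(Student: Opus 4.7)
\emph{Plan.} The strategy is to reduce the estimate to the case where $\zeta_1$ and $\zeta_2$ are reference measures on strong-unstable plaques (that is, elements of $C(0)$), and then apply the Coupling Lemma directly; the extension to $E(0)$ follows by integrating against the representing probability measure on $C(0)$.

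First I would establish the following uniform version of Corollary~\ref{c.coupling1}: there exist constants $C>0$ and $\rho\in(0,1)$ such that for any reference measures $\eta_j=\nu^u_{i_j,x_j}$ with $x_j\in\cM_{i_j}\cap\Lambda$ ($j=1,2$) and any $\varphi\in C^\gamma(M)$,
\[
\left|\int\varphi\circ f^n\,d\eta_1-\int\varphi\circ f^n\,d\eta_2\right|\le C\rho^n\|\varphi\|_\gamma
\]
for every $n\ge 1$. This is a direct adaptation of Corollary~\ref{c.coupling1}: apply Lemma~\ref{l.coupling} to $Y_j=\xi^u_{i_j}(x_j)\times[0,1]$ equipped with $m_j=\eta_j\times dt$, and write the difference of integrals as $\int [\varphi(f^n(z))-\varphi(f^n(\tau_1(z,t)))]\,dm_1(z,t)$, where $\tau_1$ is the first coordinate of the coupling map $\tau$. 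Splitting according to whether $R(z,t)\le n/2$ or $R(z,t)>n/2$, part (1) of the Coupling Lemma bounds the integrand by $\|\varphi\|_\gamma C_1^\gamma\rho_1^{\gamma n/2}$ on the first set, while part (2) bounds the contribution of the second set by $2\|\varphi\|_\infty C_2\rho_2^{n/2}\le 2\|\varphi\|_\gamma C_2\rho_2^{n/2}$. Both contributions are exponentially small and linear in $\|\varphi\|_\gamma$.

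Second, I would pass from $C(0)$ to $E(0)$ by convex integration. For $j=1,2$ write $\zeta_j=\Pi(\hat\zeta_j)=\int_{C(0)}\eta\,d\hat\zeta_j(\eta)$ for some $\hat\zeta_j\in\hat E(0)$, and fix any reference measure $\eta_0\in C(0)$. Since each $\hat\zeta_j$ is a probability measure,
\[
\zeta_1-\zeta_2=\int_{C(0)}(\eta-\eta_0)\,d\hat\zeta_1(\eta)-\int_{C(0)}(\eta-\eta_0)\,d\hat\zeta_2(\eta).
\]
For any $\varphi\in C^\gamma(M)$ with $\|\varphi\|_\gamma\le 1$, applying the first step to each pair $(\eta,\eta_0)$ gives
\[
|f^n_*(\zeta_1-\zeta_2)(\varphi)|\le\int_{C(0)}C\rho^n\,d\hat\zeta_1+\int_{C(0)}C\rho^n\,d\hat\zeta_2=2C\rho^n.
\]
Taking the supremum over the unit ball of $C^\gamma(M)$ yields the desired bound with $C_3=2C$ and $\rho_3=\rho$.

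The main obstacle is verifying that the first step really goes through for arbitrary $\varphi\in C^\gamma(M)$. The version in Corollary~\ref{c.coupling1} was stated for $\hat\varphi$ supported inside a cross-section $S$ of a single Markov element, extended to a function that is constant along unstable plaques inside that element and vanishes elsewhere. However, inspection of that proof shows it only uses the global Hölder constant and sup-norm of the observable, both of which are controlled by $\|\varphi\|_\gamma$; no hypothesis on the support or structure of $\varphi$ along unstable plaques is actually required for the split-and-estimate argument. Thus the same reasoning applies verbatim to any $\varphi\in C^\gamma(M)$, giving the desired uniform bound and completing the proof.
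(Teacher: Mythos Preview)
Your proposal is correct and follows essentially the same approach as the paper: first treat the case $\zeta_1,\zeta_2\in C(0)$ via the Coupling Lemma with the split $\{R\le n/2\}\cup\{R>n/2\}$, then extend to $E(0)$ by integrating over the representing measures on $C(0)$. The only cosmetic difference is that the paper reaches the general case in two steps (first $C(0)$ versus $E(0)$, then the triangle inequality with an auxiliary $\zeta_3\in C(0)$), whereas you do it in one step by subtracting a common reference $\eta_0$; your remark that the argument of Corollary~\ref{c.coupling1} needs only the global H\"older norm of $\varphi$ is exactly the point.
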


\begin{proof}
Assume first that $\zeta_1, \zeta_2 \in C(0)$, that is, they are of the form $\zeta_j = \nu^u_{i_j,x_j}$
with $x_j \in \cM_{i_j}$. Denote $m_j = \zeta_j \times dt$.
Then, given any $\varphi\in C^\gamma(M)$ and any $n \ge 1$,
$$
\begin{aligned}
(f_*^n \zeta_j)(\varphi)
=\int_{\xi^u_{i_j,x_j}} \varphi(f^n(y)) \, d\nu^u_{i_j,x_j}(y)
= \int_{Y_j} \varphi(f^n(y)) \, dm_j(y,t).
\end{aligned}
$$
for $j=1, 2$, and so,
$$
\begin{aligned}
f_*^n(\zeta_1-\zeta_2)(\varphi)
& = \int_{Y_1} \varphi \circ f^n \, dm_1 -
\int_{Y_2} \varphi \circ f^n \, dm_2\\
& = \int_{Y_1} \left[\varphi \circ f^n - \varphi \circ f^n \circ \tau\right] \, dm_1,
\end{aligned}
$$
where $\tau:Y_1 \to Y_2$ is as in Lemma~\ref{l.coupling}.
Define $Z(n)=\{(y,t) \in Y_1: R(y,t) \leq {n}/{2}\}$.
Then, using both parts of Lemma~\ref{l.coupling} for $n/2$,
$$
\begin{aligned}
\left|f_*^n(\zeta_1-\zeta_2)(\varphi)\right|
& \leq \int_{Z(n)} \left|\varphi \circ f^n-\varphi \circ f^n \circ \tau\right|  \, dm_1
+ 2 \|\varphi\|_0 \, m_1(Y_1 \setminus Z(n)) \\
& \leq \|\varphi\|_\gamma (C_1\rho_1^{{n}/{2}})^\gamma
+ 2 \|\varphi\|_0 C_2 \rho_2^{{n}/{2}}
\le (C_3/2) \rho_3^n \|\varphi\|_\gamma
\end{aligned}
$$
for suitable choices of $C_3$ and $\rho_3$, depending only
on $C_1$, $C_2$, $\rho_1$, $\rho_2$, and $\gamma$.

Now consider the case where $\zeta_1\in C(0)$ and $\zeta_2\in E(0)$.
By definition $\zeta_2$ is a convex combination of measures in $E_i(0)$,
$i=1,\cdots, k$, and so it is no restriction to suppose that $\zeta_2\in E_i(0)$ for some $i$.
By Lemma~\ref{l.E_disintegration}, the disintegration
$$
\zeta_2 = \int_{\xi^u_i} \zeta_P \, d\tilde{\zeta}_2(P)
$$
of $\zeta_2$ with respect to the partition $\xi_i^u$ is such that $\zeta_P \in C_i(0)$
for every $P\in\xi_i^u$. Then,
$$
(f_*^n \zeta_2)(\varphi) =\int_{\xi^u_i} (f_*^n \zeta_P)(\varphi) \, d\tilde{\zeta}_2(P)
$$
for any $\varphi\in C^\gamma(M)$ and $n \ge 1$. So,
$$
\begin{aligned}
\left|f_*^n(\zeta_1-\zeta_2)(\varphi)\right|
& = \left|\int_{\xi^u_i} \big[(f_*^n \zeta_1)(\varphi)-(f_*^n \zeta_P)(\varphi)\big] \, d\tilde{\zeta}_2(P)\right|\\
& \le \int_{\xi^u_i} \big|f_*^n (\zeta_1-\zeta_P)(\varphi)\big| \, d\tilde{\zeta}_2(P)
\le (C_3/2) \rho_3^n \|\varphi\|_\gamma.\\
\end{aligned}
$$

Finally, for any $\zeta_1$ and $\zeta_2$ in $E(0)$, we may pick any $\zeta_3\in C(0)$ and use the triangle inequality
together with the previous paragraph to conclude that
$$
|f_*^n(\zeta_1-\zeta_2)(\varphi)| \le C_3 \rho_3^n\|\varphi\|_\gamma
$$
for any $\varphi\in C^\gamma(M)$ and $n \ge 1$.
\end{proof}

This enables us to prove that the push-forwards of any measure $l \in E(0)$ under the map $f$
converge exponentially fast to $\mu$ relative to the norm $\|\cdot \|_\gamma$:

\begin{corollary}
For any $\zeta\in E(0)$ and $n \ge 1$,
$$
\|f_*^n \zeta - \mu \|_\gamma \leq C_3\rho_3^n.
$$
\end{corollary}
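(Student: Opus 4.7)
The proof is essentially a one-line application of the preceding Lemma~\ref{c.converging}. The plan is as follows.

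First, I would recall two facts already established in the paper. By Lemma~\ref{l.Eunique}, $\mu$ itself belongs to $E(0)$ (it is in fact the unique $f$-invariant element of that space). Since $\mu$ is $f$-invariant, we have $f_*^n\mu = \mu$ for every $n \ge 1$, and therefore
$$
f_*^n\zeta - \mu = f_*^n\zeta - f_*^n\mu = f_*^n(\zeta - \mu).
$$

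Next, since both $\zeta$ and $\mu$ lie in $E(0)$, I would apply Lemma~\ref{c.converging} to the pair $(\zeta_1, \zeta_2) = (\zeta, \mu)$, which gives
$$
\|f_*^n\zeta - \mu\|_\gamma = \|f_*^n(\zeta - \mu)\|_\gamma \le C_3 \rho_3^n
$$
for every $n \ge 1$. This is precisely the conclusion of the corollary, with the same constants $C_3$ and $\rho_3$ produced by the lemma.

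There is no real obstacle here: the substantive work was already carried out in Lemma~\ref{c.converging} via the coupling argument, and the only additional ingredient is the membership $\mu \in E(0)$, which in turn rests on $\mu$ being a $c$-Gibbs $u$-state (so that its conditional measures along strong-unstable plaques are the reference measures, placing it in $E(0)$ by Lemma~\ref{l.E_disintegration}).
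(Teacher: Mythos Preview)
Your proof is correct and matches the paper's own argument essentially verbatim: the paper simply notes that $\mu \in E(0)$ by Lemma~\ref{l.Eunique} and then invokes Lemma~\ref{c.converging} with $\zeta_2 = \mu$. You have merely spelled out the implicit step $f_*^n\mu = \mu$ explicitly.
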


\begin{proof}
By Lemma~\ref{l.Eunique}, the invariant measure $\mu$ belongs to $E(0)$.
Thus this is a special case of the previous lemma.
\end{proof}

Proceeding with the proof of Theorem~\ref{main.C}, we now extend this analysis to measures in $E(R)$ for any $R>0$:

\begin{lemma}\label{l.aproximate}
For any $R>0$ and any $\zeta\in E(R)$ there exists $\zeta_0 \in E(0)$
such that $\|f^n_*\zeta - f^n_*\zeta_0\|_\gamma \leq R e^R$ for any $n\geq 1$.
\end{lemma}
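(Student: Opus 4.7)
The plan is to construct $\zeta_0$ directly from $\zeta$ by stripping off the H\"older density factors in the definition of $E(R)$, and then to observe that the resulting difference of pushforwards admits a pointwise bound on densities that is completely independent of $n$.

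First, by definition of $E(R)$ one can write $\zeta = \sum_{i=1}^k a_i \zeta_i$ with $\zeta_i \in E_i(R)$ and $\sum a_i = 1$, and for each $i$ pick $\hat\zeta_i \in \hat{E}_i(R)$ with $\Pi(\hat\zeta_i) = \zeta_i$. Let $\iota: C(R) \to C(0)$ be the measurable \emph{reduction map} that sends $\eta = e^\rho \nu^u_{i,x}$ to its underlying reference measure $\nu^u_{i,x}$ (the supporting plaque $\xi^u_i(x)$ is determined by $\supp \eta$, hence so is $\nu^u_{i,x}$). Define
\[
\zeta_{0,i} = \int_{C_i(R)} \iota(\eta)\,d\hat\zeta_i(\eta) = \Pi\bigl(\iota_*\hat\zeta_i\bigr) \in E_i(0),
\]
and set $\zeta_0 = \sum_i a_i \zeta_{0,i} \in E(0)$.

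Next comes the pointwise estimate on densities. For any $\eta = e^\rho \nu^u_{i,x} \in C_i(R)$, the fact that both $\eta$ and $\nu^u_{i,x}$ are probability measures forces $\int e^\rho\,d\nu^u_{i,x}=1$, so by continuity there is some $y^\ast \in \xi^u_i(x)$ with $\rho(y^\ast)=0$. Assuming, as one may after refining the Markov partition if necessary, that every strong-unstable plaque has diameter at most $1$, the $(R,\gamma)$-H\"older condition gives $|\rho(y)| = |\rho(y)-\rho(y^\ast)| \le R\, d(y,y^\ast)^\gamma \le R$ for all $y \in \xi^u_i(x)$, and therefore
\[
\bigl|e^{\rho(y)}-1\bigr| \le e^R - 1 \le R e^R.
\]
To conclude, fix $\varphi \in C^\gamma(M)$ with $\|\varphi\|_\gamma \le 1$ and $n \ge 1$. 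For each $\eta \in C_i(R)$ as above,
\[
\bigl|f^n_\ast(\eta - \iota(\eta))(\varphi)\bigr|
 = \left|\int_{\xi^u_i(x)} \varphi\bigl(f^n(y)\bigr)\bigl(e^{\rho(y)}-1\bigr)\,d\nu^u_{i,x}(y)\right|
 \le \|\varphi\|_0 \cdot R e^R \le R e^R.
\]
Integrating this uniform bound against each $\hat\zeta_i$ and taking the convex combination in $i$ yields $|f^n_\ast(\zeta - \zeta_0)(\varphi)| \le R e^R$ for every such $\varphi$, which is precisely the claimed estimate $\|f^n_\ast\zeta - f^n_\ast\zeta_0\|_\gamma \le R e^R$.

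The only delicate step is the normalization argument producing $y^\ast$ with $\rho(y^\ast)=0$, as it is what converts the $(R,\gamma)$-H\"older control on $\rho$ into a sup-norm bound; everything else is soft. The crucial feature responsible for $n$-independence is that the difference $\eta - \iota(\eta)$ is supported on a single plaque $\xi^u_i(x)$ before any pushforward, so the uniform bound $|e^\rho - 1| \le R e^R$ on its Radon-Nikodym density passes unchanged through $f^n_\ast$ without the $L^\gamma$ amplification \eqref{eq.pushforward} that one would incur if the comparison had to be performed after iterating.
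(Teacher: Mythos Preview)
Your proof is correct and follows essentially the same approach as the paper: strip off the H\"older densities to define $\zeta_0$, use the normalization $\int e^\rho\,d\nu^u_{i,x}=1$ together with the $(R,\gamma)$-H\"older condition (and the diameter bound $\le 1$) to obtain $|e^\rho-1|\le R e^R$, and then bound $|f^n_*(\zeta-\zeta_0)(\varphi)|$ by $\|\varphi\|_0\, R e^R$. The only cosmetic differences are that you make the reduction map $\iota$ explicit and locate a zero $y^\ast$ of $\rho$ via the intermediate value theorem, whereas the paper argues directly that $e^{-R}\le e^\rho\le e^R$ from the min/max of $e^\rho$ straddling $1$; both routes yield the same bound.
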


\begin{proof}
By definition, every $\zeta \in E(R)$ is a convex combination of elements of $E_i(R)$, $i=1, \dots, k$.
So, it is no restriction to assume that $ \zeta \in E_i(R)$ for some $i$.
By Lemma~\ref{l.E_disintegration}, we may write
$$
\zeta = \int_{\cM_i} e^{\rho_x} \nu^u_{i,x} \, d\hat\zeta(x)
$$
where $\hat\zeta$ is a probability measure on $C_i(R)$, and each function $\rho_x$ satisfies
\begin{equation}\label{eq.rho_normalized}
\int_{\xi^u_i(x)} e^{\rho_x} \, d\nu^u_{i,x} =1
\end{equation}
together with the H\"older condition \eqref{eq.rho_Holder}.
Let us check that
$$
\zeta_0 = \int_{\cM_i} \nu^u_{i,x} \, d\hat{\zeta}(x)
$$
satisfies the claim.
It is no restriction to assume that the diameters of all $\xi_i^u(x)$ are bounded by $1$,
and then \eqref{eq.rho_Holder} implies that
\begin{equation*}
e^{-R} \leq e^{\rho_x(y)-\rho_x(z)} \leq e^R
\text{ for every $y, z\in \xi_i^u(x)$ and $x\in\cM_i$.}
\end{equation*}
Property \eqref{eq.rho_normalized} implies that the minimum (respectively, maximum) of $e^{\rho_x}$
on $\xi^u_i(x)$ is less (respectively, greater) than or equal to $1$.
So, the previous inequality also yields that
$$
e^{-R} \leq e^{\rho_x(y)} \leq e^R
\text{ for every $y\in\xi_i^u(x)$ and $x\in\cM_i$.}
$$
In particular, $|e^{\rho_x}-1| \le R e^R$ for every $x$. Then, for any $\varphi\in C^\gamma(M)$,
\begin{equation*}
\begin{aligned}
\left|\int\varphi \, df^n_*\zeta - \int \varphi \, df^n_*\zeta_0\right|
& = \left|\int \left[ \int \varphi\circ f^n \left(e^{\rho_x} -1\right) \, d\nu_{i,x}^u \right] d\hat\zeta(x)\right|\\
& \leq \|\varphi\circ f^n\|_0 R e^R
\leq \|\varphi\|_\gamma R e^R.
\end{aligned}
\end{equation*}
This gives the claim.
\end{proof}

Going back to the proof of Theorem~\ref{main.C}, consider any $\zeta \in E(R)$. Let $\omega<1$ be as in \eqref{eq.omega}.
By Proposition~\ref{p.Econtracting}
$$
f_*^m \zeta \in E\left(R e^{l \gamma m\log \omega}\right)
\text{ for any $m\ge 1$.}
$$
Replacing $\zeta$ and $R$ with $f^m_*\zeta$ and $R e^{l \gamma m\log \omega}$ in Lemma~\ref{l.aproximate},
we find that for each $m\ge 1$ there exists $\zeta_m \in E(0)$ such that
$$
\|f_*^k(f_*^m \zeta - \zeta_m) \|_\gamma
\leq R e^{l \gamma m\log \omega} \exp\left(R e^{l \gamma m\log \omega}\right)
\text{ for any $k\ge 1$.}
$$
Given any $m\ge 1$, let $k, m \approx n/2$ such that $k+m=n$. Then
$$
\begin{aligned}
\|f_*^n\zeta - \mu\|_\gamma 
& \le \|f_*^k(f_*^m \zeta - \zeta_m)\|_\gamma + \|f_*^k\zeta_m-\mu\|_\gamma \\
& \le R \omega^{l \gamma m} \exp\left(R \omega^{l \gamma m}\right) + C_2 \rho_3^k 
\end{aligned}
$$
Take $\tau=\max\{\omega^{l\gamma/2},\rho_3^{1/2}\}$, and note that it is in $(0,1)$,
since $\omega$ and $\rho_3$ are. Moreover, the previous inequality ensures that
\begin{equation}\label{eq_almost_there}
\|f_*^n\zeta - \mu\|_\gamma \le L \tau^n \text{ for every $n\ge 1$}
\end{equation}
if $L >0$ is chosen suitably.

Finally, $\psi$ be any $\gamma$-H\"older function not identically zero.
Then $\Psi=\psi+2\|\psi\|_0$ is a strictly positive function and it is still $\gamma$-H\"older.
More to the point, $\log\Psi$ is also $\gamma$-H\"older.
Let $R$ be the multiplicative H\"older constant. Then the probability measure
$$
\zeta=\frac{\Psi \mu}{\int_M \Psi\,d\mu}
= e^{\log\Psi -\log \int_M \Psi\,d\mu} \mu
$$
is in $E(R)$, since $\mu$ is in $E(0)$. Since the difference $\psi-\Psi$ is constant, the correlation
$$
\left|\int_M (\varphi\circ f^{n})\psi \, d\mu - \int_M \varphi \, d\mu \int_M \psi \, d\mu \right|
$$
is not affected if we replace $\psi$ with $\Psi$. So \eqref{eq_almost_there} gives that
$$
\begin{aligned}
\left|\int_M (\varphi\circ f^{n})\psi \, d\mu - \int_M \varphi \, d\mu \int_M \psi \, d\mu \right|
& = \left|\int_M (\varphi\circ f^{n}) \Psi \, d\mu - \int_M \varphi \, d\mu \int_M \Psi \, d\mu \right| \\
& 
= \int_M \Psi \, d\mu \left|\int_M (\varphi\circ f^{n}) \, d\zeta - \int_M \varphi \, d\mu \right| \\
&  \le \|f_*^n\zeta - \mu\|_\gamma \|\varphi\|_\gamma \|\Psi\|_0 \\
& \le L \tau ^n \|\varphi\|_\gamma \|\psi\|_0.
\end{aligned}
$$
Just take $K(\varphi,\psi) = L \|\varphi_\gamma \|\psi\|_0$. The proof of Theorem~\ref{main.C} is complete.

\bibliographystyle{alpha}
\bibliography{bib}

\end{document}